\newlength{\halfbls}\setlength{\halfbls}{.5\baselineskip}
   \newcommand{\CC}{\mathbb{C}}
  \newcommand{\HH}{\mathbb{H}}
 \newcommand{\WW}{\mathbb{W}}  \newcommand{\VV}{\mathbb{V}}
 \newcommand{\PP}{\mathbb{P}}  \newcommand{\NN}{\mathbb{N}}
\newcommand{\QQ}{\mathbb{Q}} \newcommand{\RR}{\mathbb{R}} 
\newcommand{\ZZ}{\mathbb{Z}}
\newcommand{\cLL}{{\mathcal L}} 
 \newcommand{\cFF}{{\mathcal F}} \newcommand{\cEE}{{\mathcal E}}
\newcommand{\cHH}{{\mathcal H}}
\newcommand{\cMM}{{\mathcal M}} \newcommand{\cOO}{{\mathcal O}}
 \newcommand{\cVV}{{\mathcal V}}
\newcommand{\bfa}{{\bf a}}
\newcommand{\bfb}{{\bf b}}
\newcommand{\bfal}{{\boldsymbol{\alpha}}}
\newcommand{\bfbe}{{\boldsymbol{\beta}}}
 \newcommand{\Sp}{{\rm Sp}}
\newcommand{\gr}{{\rm gr}}
 \newcommand{\GL}{{\rm GL}}   
\newcommand{\adm}{admissible}
\newcommand{\rank}{{\rm rank}}
\DeclareMathOperator{\vol}{vol}
\DeclareMathOperator{\Ann}{Ann}
\newcommand{\sms}{\smallsetminus}    
    \newcommand{\ve}{{\varepsilon}}
\newcommand{\ol}{\overline}
\newtheorem{Defi}{Definition}[section]  
\newtheorem{Rem}[Defi]{Remark}    \newtheorem{Prop}[Defi]{Proposition}
\newtheorem{Lemma}[Defi]{Lemma}    \newtheorem{Cor}[Defi]{Corollary}
\newtheorem{Thm}[Defi]{Theorem}  
\newtheorem*{nnThm}{Theorem}  
\newtheorem*{nnConj}{Conjecture}  
\newtheorem{Conj}[Defi]{Conjecture}
\def\={\;=\;}  \def\+{\,+\,}        
   \def\wt#1{\widetilde{#1}}
\newcommand\R{\mathbb R}  \newcommand\C{\mathbb C}
       \def\p{\varphi}   \def\2{\pi_2}
\def\be{\begin{equation}}   \def\ee{\end{equation}}     \def\bes{\begin{equation*}}    \def\ees{\end{equation*}}
\def\ba{\be\begin{aligned}} \def\ea{\end{aligned}\ee}   \def\bas{\bes\begin{aligned}}  \def\eas{\end{aligned}\ees}
\newcommand{\dd}{\,\mathrm{d}}
\newcommand{\hyp}{{\rm hyp}}
\newcommand{\Sol}{{\rm Sol}}
\newcommand{\bad}{{\rm bad}}
\newcounter{savedtocdepth}
\newcommand*{\SaveTocDepth}[1]{%
  \addtocontents{toc}{%
    \protect\setcounter{savedtocdepth}{\protect\value{tocdepth}}%
    \protect\setcounter{tocdepth}{#1}%
  }%
}
\title[Lower bounds for Lyapunov exponents]
{Lower bounds for Lyapunov exponents of \\ flat bundles on curves}
\author[A.~Eskin]{Alex Eskin}
\thanks{Research  of  the first author is partially supported  by
NSF grant.}
\address{
Department of Mathematics,
University of Chicago,
Chicago, Illinois 60637, USA\\
}
\email{eskin@math.uchicago.edu}
\author[M.~Kontsevich]{Maxim Kontsevich}
\address{IHES,
le Bois Marie,
35, route de Chartres,
91440 Bures-sur-Yvette, FRANCE\\
}
\email{maxim@ihes.fr}
\author[M.~M\"oller]{Martin M\"oller}
\thanks{Research  of the third author is partially supported  by
DFG-grant MO 1884/1.}
\address{
Institut f\"ur Mathematik, Goethe--Universit\"at Frankfurt,
Robert-Mayer-Str. 6--8,
60325 Frankfurt am Main, Germany\\
}
\email{moeller@math.uni-frankfurt.de}
\author[A.~Zorich]{Anton Zorich}
\thanks{Research of the fourth author is partially supported by IUF}
\address{
Center for Advanced Studies, Skoltech;
Institut Universitaire de France;
Institut de Math\'ematiques de Jussieu --
Paris Rive Gauche,
B\^atiment Sophie Germain,
Case 7012,
8 Place Aurélie Nemours,
75205 PARIS Cedex 13, France}
\email{anton.zorich@gmail.com}
\date{August 17, 2017}
\dedicatory{To the memory of Jean-Christophe Yoccoz}
\begin{document}

\begin{abstract} Consider a flat bundle over a complex curve.
We prove a conjecture of Fei Yu that the sum of the top~$k$~Lyapunov
exponents of the flat bundle is always greater or equal to the degree of 
any rank~$k$
holomorphic subbundle. We generalize the original context
from Teichm\"uller curves to any local system over a curve with
non-expanding cusp monodromies. As an application we obtain the large genus
limits of individual Lyapunov exponents in hyperelliptic strata of
Abelian differentials, proved by Fei Yu conditionally to his conjecture.
\par
Understanding the case of equality with the degrees of subbundle coming from 
the Hodge filtration seems challenging, e.g.\ for Calabi--Yau type families. 
We conjecture that equality of the sum of Lyapunov exponents and the degree 
is related to the monodromy group being a thin subgroup of its Zariski closure.
\end{abstract}

\maketitle

\noindent
\SaveTocDepth{1}

\section{Introduction} 
\label{sec:intro}

Lyapunov exponents are dynamical analogs of characteristic numbers of vector bundles.
The Lyapunov exponents for the Teichm\"uller geodesic flow relate the
dynamics on moduli space with the dynamics on flat surfaces. Efficiently
computing them is currently still a challenge, both for strata of the
moduli space of flat surfaces and for Teichm\"uller curves, including
all the Teichm\"uller curves generated by square-tiled surfaces. Starting 
with \cite{kontsevich} it was realized that the {\em sum} of
(i.e.\  the sum of 
the positive) Lyapunov exponents equals the normalized degree of the Hodge 
bundle on Teichm\"uller 
curves, see \cite{forni02}, \cite{krik}, \cite{bouwmoel}, \cite{eskozo} for versions of 
this formula, including the case of strata. This observation generalizes
from the variation of Hodge structures over Teichm\"uller curves to
any weight one variation of Hodge structures (VHS). Presently, irreducible
summands of weight one VHS are the only instances where such degree
formulas are known. Even the computation of Filip (\cite{Filip:K3}) of the top 
Lyapunov exponent for families of K3 surfaces can be subsumed under this observation,
if one refers to his proof using the Kuga--Satake construction.
\par
The main result of this paper is that an {\em inequality} for the
sum of the top~$k$ Lyapunov exponents holds in great generality. This was first 
conjectured by \cite{Fei}, but the scope given here is more general. 
\par
Let $C=\HH/\Gamma$ be hyperbolic Riemann surface of finite area (or equivalently, 
a complex quasi-projective curve) with a representation $\rho: \pi_1(C) \to \GL(V)$
such that, if $C$ is non-compact, the mondromies around the cusps $\Delta
= \ol{C} \setminus C$ are
{\em non-expanding}, i.e.\ all the eigenvalues lie on the unit circle. This
assumption is  necessary  and also sufficient for Oseledets theorem, see 
Sections~\ref{sec:pfintegrab} and~\ref{sec:nec}.
To be more precise, we need to specify a norm on the flat bundle~$\VV$ determined
by~$\rho$. There are two natural choices: the practical choice (for simulations)
is a ``constant'' norm obtained by parallel transport along a 
Dirichlet fundamental domain
for~$\Gamma$ and the sophisticated choice of an {\em admissible} norm 
(see Section~\ref{sec:admmetric} for the precise definition) that has the right growth 
at the cusps and compatibility with exterior powers. Oseledets theorem is very
insensitive to such choices: 
we show (Theorem~\ref{thm:admisintegrable} and Proposition~\ref{prop:constantmetric}, 
see also the appendix for the background on measurable cocycles) 
that both norms satisfy the integrability condition and compute the same Lyapunov 
exponents.
\par
For VHS of arbitrary weight we show that the Hodge norm is admissible. Along 
with the proof (Proposition~\ref{prop:HodgeAdm}) we give an upper bound for the Lyapunov
exponents that is uniform for all VHS of given weight and rank.
However, our estimate is very crude. It is an interesting problem to prove tight 
upper bounds for Lyapunov exponents for VHS.
\par
In the setting of a local system $\VV$ defined by $\rho$ and a norm as above,
we can now state our main Theorem~\ref{thm:main_estimate}:
\begin{nnThm} 
For any holomorphic rank~$k$ subbundle~$\cEE$ of the Deligne extension
of $\VV$ the sum of the top~$k$ Lyapunov exponents is bounded below by
\be
\label{intro:main:ineqver2}
\sum_{i=1}^k       \lambda_i       \,\geq\,
\frac{2\deg_{\rm par}(\cEE)}{2g(\ol{C})-2+|\Delta|}
\,, \ee
where $g(\ol{C})$ is the genus of the curve~$\ol{C}$ and $|\Delta|$ is the number
of cusps.
\end{nnThm}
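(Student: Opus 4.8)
The plan is to reduce to a line subbundle and then to play a Gauss--Bonnet computation of the degree against an Oseledets/ergodic computation of $\lambda_1$. First I would pass to exterior powers: the sum of the top $k$ Lyapunov exponents of $\VV$ is exactly the top exponent of the flat bundle $\wedge^k\VV$, the top wedge $\det\cEE=\wedge^k\cEE$ is a holomorphic line subbundle of the Deligne extension of $\wedge^k\VV$, and $\deg_{\rm par}(\det\cEE)=\deg_{\rm par}(\cEE)$ since parabolic degrees are additive and the determinant carries the sum of the parabolic weights. The cusp monodromies of $\wedge^k\VV$ remain non-expanding (products of unit-circle eigenvalues), so the Oseledets framework of the earlier sections applies verbatim. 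Thus it suffices to prove, for a holomorphic line subbundle $\cLL$ of the Deligne extension of a flat bundle with non-expanding monodromy and admissible norm, that $\lambda_1 \ge 2\deg_{\rm par}(\cLL)/(2g(\ol C)-2+|\Delta|)$.

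For the line-bundle case I would set up the two sides. Pulling back to $\HH$, the flat bundle trivializes and the admissible norm becomes a varying Hermitian metric $h(z)$ on a fixed space. By Theorem~\ref{thm:admisintegrable} the cocycle is integrable, so $\lambda_1$ is the a.e.\ exponential growth rate, per unit hyperbolic length, of $\|v\|_h$ along geodesics; by ergodicity of the geodesic flow this top rate is realized as a hyperbolic space average. On the complex-geometric side I pick a meromorphic section $s$ of $\cLL$, set $u=\log\|s\|_h$, and use Poincaré--Lelong to identify $\deg_{\rm par}(\cLL)$, including the parabolic correction terms at the cusps, with $\tfrac1{2\pi}\int_{\ol C}$ of the curvature of the induced metric on $\cLL$. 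The bridge between the two sides is the elementary but essential fact that the curvature of a holomorphic subbundle is dominated by the top eigenvalue of the curvature of the ambient admissible metric, since the second fundamental form can only decrease subbundle curvature.

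Combining these, the top eigenvalue of the ambient curvature, averaged against the hyperbolic area form, is bounded above by $\lambda_1$ (the curvature/Lyapunov comparison, where $\lambda_1$ enters as the maximal expansion rate of the cocycle). Because $\vol_{\rm hyp}(C)=2\pi(2g(\ol C)-2+|\Delta|)$, normalizing the curvature integral by the hyperbolic area produces exactly the factor $2/(2g(\ol C)-2+|\Delta|)$, the extra $2$ reflecting $\log\|s\|^2$ versus $\log\|s\|$. The resulting chain $\deg_{\rm par}(\cLL)\le(\text{average top curvature})\cdot\tfrac{\vol_{\rm hyp}(C)}{2\pi}\le\lambda_1\cdot\tfrac{2g(\ol C)-2+|\Delta|}{2}$ is the claim; one expects equality precisely when $\cLL$ is the maximally positive Hodge-theoretic subbundle, which recovers the known degree formula.

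The hard part will be the behaviour at the cusps. I must show that the non-compact curvature integral converges and that its value is genuinely the \emph{parabolic} degree, i.e.\ that the local contributions of $u=\log\|s\|_h$ near each cusp reproduce exactly the parabolic weight corrections, so that the integration by parts against the complete hyperbolic metric is legitimate and the boundary terms are the parabolic ones. This is exactly where non-expanding monodromy and the prescribed cusp growth of an admissible norm are indispensable: they force at most polynomial/logarithmic blow-up of $\|s\|_h$, controlling every boundary term. A secondary obstacle is making the curvature/Lyapunov comparison and the exchange of the geodesic time-average with the hyperbolic space-average rigorous up to the cusps for a merely admissible, rather than Hodge, metric.
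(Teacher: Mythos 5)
Your reduction to the line-bundle case via $\wedge^k$ and the final Gauss--Bonnet bookkeeping (including the emphasis on the cusp contributions to the parabolic degree) match the structure of the paper's argument. The genuine gap is in your ``bridge''. You propose the chain: curvature of $\cLL\subset\VV$ is dominated by the top eigenvalue of the ambient curvature (Griffiths' second-fundamental-form inequality, fine), and then that the hyperbolic average of this top curvature eigenvalue is bounded above by $\lambda_1$. This second step -- the ``curvature/Lyapunov comparison'' which you defer as a secondary obstacle -- is in fact the heart of the matter, and as stated it is false for a general admissible metric. The Lyapunov exponent is a growth rate of \emph{flat} sections and is insensitive to modifications of the metric that preserve $L^1$-equivalence of norms, whereas the pointwise top eigenvalue of the ambient curvature is not: in rank $\geq 2$ one can perturb an admissible metric on a compact part of $C$ so that $\int_C\lambda_{\max}(\Theta_h)$ becomes arbitrarily large (positive and negative eigenvalues do not cancel in a maximum, unlike in the trace) while $\lambda_1$ is unchanged. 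Admissibility/acceptability only constrains the curvature near the cusps, so nothing ties $\lambda_{\max}(\Theta_h)$ to the dynamics.

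The paper avoids any curvature comparison on the ambient bundle. Instead it works on the dual bundle $\VV^\vee$ and introduces the seminorm $\|u\|_{\cEE}=|\omega_c(u)|/\|\omega_c\|_h$ of \eqref{eq:seminorm:of:L}, where $\omega$ is a local holomorphic generator of $\cLL=\wedge^k\cEE$ and $u$ is flat. The two pillars are: (i) the trivial pointwise bound $\|u\|_\vee\geq\|u\|_{\cEE}$ (Lemma~\ref{lm:norm:over:seminorm}), which lets one lower-bound the dynamical average of $\log\|g_Tr_\theta u\|_\vee$ by that of the seminorm; and (ii) the observation that $\log\|u_z\|_{\cEE}=\log|\omega_z(u)|-\log\|\omega_z\|_h$, where the first term is the log-modulus of a holomorphic function, hence has \emph{non-negative} distributional Laplacian (point masses at the ``bad locus''), so that after Green's formula only $-\Delta_{\hyp}\log\|\omega_z\|_h$ survives as a lower bound, and its integral is $2\pi\deg_{\rm par}$ by Proposition~\ref{prop:degwedgepower}. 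This subharmonicity of the pairing against a flat section is the replacement for your curvature eigenvalue estimate; it is what makes the inequality one-sided in the correct direction with no hypothesis on the ambient curvature beyond acceptability (used only for dominated convergence and for computing the parabolic degree). To repair your argument you would need to replace the curvature/Lyapunov comparison by this pairing argument, or supply a proof of the comparison, which I do not believe exists at this level of generality.
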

\noindent
Here the parabolic degree $\deg_{\rm par}$ of a vector bundle is equal to the 
degree in the case of unipotent monodromies and is defined 
in Section~\ref{sec:parabolic} in general.
\par
A theorem in a similar spirit in rank two was proven previously 
by Deroin and Dujardin in~\cite{derduj}. The main theorem of
the subsequent paper~\cite{dander} by Daniel and Deroin proves a formula simliar
to~\eqref{intro:main:ineqver2} using Brownian motion techniques, 
applicable also to a higher-dimensional base provided that
the base is compact.
\par
\smallskip
This theorem has two types of applications. The first is the large genus
limit of Lyapunov exponents for hyperelliptic strata of Abelian
differentials (Corollary~\ref{cor:hyplim}, 
proven by F.~Yu conditionally to our main theorem).
\par
As preparation for our second application we show in the last section
that the parabolic degrees of the Hodge bundles of
hypergeometric local systems can be easily expressed in terms of the local
exponents,
see Section~\ref{sec:localexp} for the notions and Theorem~\ref{thm:degs} 
for the precise statement. 
\par
This second application concerns families of Calabi--Yau threefolds and 
conjecturally gives new cases where equality in~\eqref{intro:main:ineqver2} holds. 
There is a well-known list of $14$~rank~$4$ hypergeometric local systems
(see Table~\ref{cap:mirror}, including the mirror quintic) that could be the middle 
cohomology of a family of Calabi--Yau threefolds with $h^{2,1}=1$. In
 7 out of these 14 examples the monodromy group is thin in the symplectic group
(see~\cite{BravThomas}, \cite{SinghVenk} and Section~\ref{sec:APPCY}).
\par
\begin{nnConj} The inequality~\eqref{intro:main:ineqver2} becomes an equality 
precisely in the  7 out of these 14 cases where the monodromy group 
is thin.\footnote{Simion Filip has recently
announced a proof of this conjecture.}
\end{nnConj}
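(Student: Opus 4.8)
The plan is to first pin down, in the specific setting of Table~\ref{cap:mirror}, what equality in~\eqref{intro:main:ineqver2} actually means dynamically, and then to match this against the arithmetic dichotomy. Here $C=\PP^1\setminus\{0,1,\infty\}$, so $2g(\ol C)-2+|\Delta|=1$ and the bound reads $\sum_{i=1}^k\lambda_i\ge 2\deg_{\rm par}(\cEE)$. The variation is symplectic of rank~$4$ and weight~$3$, with Hodge flag $F^3\subset F^2\subset F^1\subset\VV$ of ranks $1,2,3$, and the Lyapunov spectrum is symmetric, $\lambda_1\ge\lambda_2\ge-\lambda_2\ge-\lambda_1$. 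For $k=1$ and $\cEE=F^3$ the inequality reduces to the normalization $\lambda_1=1$, which holds throughout the table; hence the content of the conjecture is concentrated in the case $k=2$, $\cEE=F^2$, i.e.\ in whether the single number $\lambda_2$ attains the rational lower bound $2\deg_{\rm par}(F^2)-1$, where $\deg_{\rm par}(F^2)$ is computed from the local exponents by Theorem~\ref{thm:degs}.

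First I would isolate a clean characterization of equality by refining the proof of Theorem~\ref{thm:main_estimate}. That proof compares the cocycle growth measured in the admissible (Hodge) norm with the parabolic degree of $\cEE$, and the gap is a manifestly nonnegative quantity: the $L^2$-norm, against the hyperbolic area measure on $C$, of the second fundamental form of the top Oseledets subbundle relative to the Hodge metric---equivalently the off-diagonal part of the Gauss--Manin connection measuring the failure of the Oseledets flag to be parallel and holomorphic. Thus equality should hold iff this defect vanishes, i.e.\ iff the maximal isotropic (Lagrangian) Oseledets subbundle coincides almost everywhere with a flat holomorphic subbundle compatible with $F^2$. By the rigidity of polarized variations (Schmid, Simpson) such an almost-everywhere flat holomorphic splitting forces genuine algebraic structure on the local system, so that equality is no longer a transcendental coincidence but the shadow of an algebraic phenomenon.

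Second, I would run the dichotomy through the known classification of the $14$ cases (\cite{BravThomas}, \cite{SinghVenk}). In the $7$ thin cases the strategy is constructive: exhibit the extra structure predicted by the vanishing defect---a finite cover, a symmetric or tensor decomposition, or a Deligne--Mostow-type uniformizing substructure---that produces a flat holomorphic splitting of $\VV$ along $F^2$ and thereby forces $\lambda_2=2\deg_{\rm par}(F^2)-1$ exactly. In the $7$ arithmetic cases the strategy is instead a non-degeneracy argument: arithmeticity should make the monodromy orbit equidistribute in $\Sp(4,\RR)/\Sp(4,\ZZ)$, so the second fundamental form cannot degenerate on a full-measure set, the defect integral is strictly positive, and the inequality is strict.

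The hard part will be the arithmetic direction, namely turning ``no algebraic splitting exists'' into the quantitative statement that the defect integral is bounded away from zero. Unlike the thin case, where equality is witnessed by an explicit geometric structure, strictness is a statement about the transcendental-looking quantity $\lambda_2$ and requires a genuine lower bound on an ergodic integral---in effect a spectral gap for the \Teichmuller/hyperbolic flow acting on the flat bundle, uniform enough to separate $\lambda_2$ from its rational floor. Converting Zariski-density together with arithmeticity into such a quantitative non-vanishing of the Kodaira--Spencer contribution, and doing so uniformly across the $7$ cases, is where I expect the main difficulty to lie.
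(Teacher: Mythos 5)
First, a point of orientation: the statement you are proving is a \emph{conjecture} in this paper, not a theorem. The paper offers no proof of it --- only the numerical evidence of Table~\ref{cap:mirror}, the computation of the parabolic degrees (Theorem~\ref{thm:degs}) needed to evaluate the right-hand side of~\eqref{intro:main:ineqver2}, and, in Section~\ref{sec:conj}, a heuristic for the equality direction in the mirror quintic case: since strict inequality in Theorem~\ref{thm:main_estimate} is caused exactly by the bad locus~\eqref{eq:Tbad}, equality should follow from exhibiting a flat section of $p^*(\wedge^2\VV^\vee)$ whose pairing with the monodromy-invariant Lagrangian plane never vanishes, which the authors reduce to the non-vanishing of an explicit Wronskian pullback (Conjecture~\ref{conj:mirrorquintic}). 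So there is no ``paper's proof'' to match yours against; what can be assessed is whether your outline is sound and whether it closes either direction. It does not, and it contains one concrete error.

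The error is in your reduction. You assert that for $k=1$, $\cEE=F^3$, the inequality ``reduces to the normalization $\lambda_1=1$, which holds throughout the table.'' It does not: by Theorem~\ref{thm:degs} the $k=1$ bound is $\lambda_1\ge 2\mu_1$ (e.g.\ $2/5$ for the quintic), and the table shows $\lambda_1$ ranging from about $0.75$ to $1.75$ --- there is no normalization forcing $\lambda_1=1$ here. Consequently your restatement of the equality case as ``$\lambda_2$ attains $2\deg_{\rm par}(F^2)-1$'' is wrong; the conjecture is about $\lambda_1+\lambda_2=2(\mu_1+\mu_2)$ with $\lambda_1$ itself unknown. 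Beyond this, both directions of your program are gaps rather than proofs. Your identification of the defect in Theorem~\ref{thm:main_estimate} with an $L^2$-norm of a second fundamental form of the Oseledets subbundle is asserted, not derived; the defect that actually appears in the paper's proof is the positive local contribution of $\Delta_{\hyp}\log|\omega_z(u)|$ at the bad points, plus the possible drop $\deg_{\rm par}(\Xi_h(\cEE|_C))\ge\deg_{\rm par}(\cEE)$, and turning either into your proposed ``flat holomorphic splitting compatible with $F^2$'' criterion would itself require an argument (and a flat splitting of an \emph{irreducible} hypergeometric local system cannot exist, so the criterion as stated cannot be the right one). In the thin direction, the ``extra structure'' (finite covers, tensor decompositions, Deligne--Mostow substructures) is not known to exist in these seven cases and you do not produce it. In the arithmetic direction you candidly acknowledge that you have no mechanism for converting arithmeticity into a strict lower bound on the defect; equidistribution of the monodromy orbit in $\Sp(4,\RR)/\Sp(4,\ZZ)$ is not a statement about the geodesic flow on $C$ and does not obviously bear on the bad locus. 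In short: the framing of the problem (which $k$, which $\cEE$, which degrees) is essentially right up to the $\lambda_1=1$ slip, but no step of either implication is actually established, and the one concrete reduction offered is incorrect.
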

\par
Initially, we stated a more optimistic conjecture on a region
in the parameter space for the local exponents where the equality is attained. 
This initial conjecture can no longer be upheld after more detailed numerical
experiments by Fougeron, see \cite{Fougeron}. We discuss this
in more detail in  Section~\ref{sec:APPCY}).
%
\par
\medskip
{\bf Acknowledgements.} The authors thank Fei Yu for the inspiring conjecture;
Simion Filip for enlightning discussions around Proposition~3.2, and the
referee for suggestions which improved the presentation. We also
thank the Max-Planck-Institute for Mathematics in Bonn for its hospitality
during the preparation of the paper.

\section{Lyapunov exponents for flat bundles with non-expanding cusp monodromies} 
\label{sec:LforRegSing}

In this section we show that Lyapunov exponents for flat bundles over
the geodesic flow on a (base) curve are defined for a very large class
of flat bundles. The only restriction that we impose is that the 
monodromies  around the boundary points have eigenvalues of absolute value one. 
This (strictly) includes the case of quasi-unipotent monodromies.
\par
\medskip
We now give the background and the definitions alluded to above. 
Our base manifold will always be an algebraic curve $C = \Gamma \backslash \HH$, 
not necessarily compact. Let $\ol{C}$ be the smooth compactification
and $\Delta = \ol{C} \smallsetminus C$ be the boundary points. The flow
will always be the unit speed geodesic flow $g_t$ on the unit tangent bundle
$T^1 C$ for the metric of constant curvature $-4$ (see Remark~\ref{rem:curvature} 
for the history of this convention) and $\mu$ will be the corresponding invariant 
probability measure.
\par
Let $\VV$ be a flat bundle over $C$ of rank~$r$. We will denote by $(\cVV_C,\nabla)$ 
the  associated vector bundle with its flat connection. 
We say that $\VV$ has {\em non-expanding cusp monodromies}
if for each element $\gamma \in \pi_1(C,c_0)$ homotopic to a simple loop around 
a point in $\Delta$ all the eigenvalues
of $\rho(\gamma)$ have absolute value one. Recall that $\VV$ has {\em 
quasi-unipotent} monodromies if for each element $\gamma \in \pi_1(C,c_0)$ homotopic 
to a simple loop around a point in $\Delta$ there exists some $n$ such
that $\rho(\gamma)^n - {\rm Id}$ is nilpotent. Consequently, having quasi-unipotent
monodromy implies non-expanding cusp monodromies. We show in Section~\ref{sec:pfintegrab}
and~\ref{sec:nec}
that this condition is necessary and sufficient 
for integrability of the flat bundle~$\VV$.
\par
The remaining ingredient we need for the definition of a 
Lyapunov spectrum is a norm $\|\cdot\|$ on $\VV$. We will define in 
Section~\ref{sec:admmetric} a notion of~{\em \adm\ metric~$h$} that we can
provide any local system with and that is suitable for metric extensions
of the line bundle to $\ol{C}$. Such a metric is also the basis to
define Lyapunov exponents for the flat bundle~$\VV$. 
These two notions will be our main hypothesis for the existence of Lyapunov
exponents for flat bundles. Our aim is to
show the following norm bound for the lift $G_t$ of the geodesic flow $g_t$ to $\VV$.
\par
\begin{Thm} \label{thm:admisintegrable}
If $\VV$ is a flat bundle of $\CC$-rank~$r$ 
on $C$ such that the eigenvalues of monodromy 
around points in $\Delta$ all have absolute value one, then for any \adm\ 
metric on~$\VV$ the induced cocycle
is integrable (in the sense of Definition~\ref{def:meascocycle}). 
The corresponding Lyapunov exponents $\lambda_1 \geq \lambda_2 \geq \cdots \geq 
\lambda_r$  are independent of the choice of an \adm\ metric.
\end{Thm}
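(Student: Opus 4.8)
The plan is to reduce both assertions to a single growth estimate for the admissible metric near the cusps: the integrability statement becomes an $L^1$-bound, and the independence statement becomes a temperedness (subexponential-gauge) argument. The lifted cocycle $G_t$ is parallel transport for $\nabla$ along the geodesic flow, measured in the admissible norm, so $\|G_t(v)\|$ is the operator norm of parallel transport from $(\VV_v,h)$ to $(\VV_{g_tv},h)$. First I would introduce a height function $y$ on $T^1C$ equal to the imaginary-part coordinate in a standard horoball neighborhood of each of the finitely many cusps $\Delta$ and bounded on the (compact) thick part; on the thick part all quantities are continuous and hence bounded, so the whole content sits near the cusps. There I would pass to the local model $\{y>Y_0\}/(z\mapsto z+1)$ and a flat multivalued frame. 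The non-expanding hypothesis says the cusp monodromy has Jordan decomposition $T=T_sT_u$ with $T_s$ a rotation (all eigenvalues on the unit circle) and $T_u=\exp N$ unipotent, so the defining growth property of an admissible metric (Section~\ref{sec:admmetric}) forces the matrices of $h$ and of $h^{-1}$ in a Deligne-extension frame to be polynomially bounded in $y$, say $\|h\|,\|h^{-1}\|=O(y^M)$. Since a geodesic segment of unit length changes $\log y$ only by a bounded amount, this yields
\[
\sup_{0\le t\le 1}\big(\log^+\|G_t(v)\| + \log^+\|G_t(v)^{-1}\|\big)\ \le\ M'\log^+ y(v) + M''.
\]

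Second, I would check that the dominating function is integrable. The invariant measure gives $\mu(\{y\ge Y\})\asymp 1/Y$ (because area measure is $\asymp y^{-2}\,dx\,dy$ and the fibre is compact), so $\int_{T^1C}\log^+ y\,d\mu<\infty$ by a one-variable integration by parts. This verifies the integrability condition of Definition~\ref{def:meascocycle} and, via Oseledets, produces a well-defined spectrum for each admissible metric. It is exactly here that the non-expanding hypothesis is used: a generic excursion reaching height $Y$ winds $\asymp Y$ times around the cusp in time $\asymp\log Y$, so a flat section is scaled by $T^{\asymp Y}$; when $|\lambda|=1$ the unipotent part contributes only the polynomial factor $y^{m}$, but if some eigenvalue had $|\lambda|\ne1$ the section would grow like $|\lambda|^{cY}$, i.e.\ doubly exponentially in the excursion time, destroying integrability. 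This is the necessity anticipated for Section~\ref{sec:nec}.

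Finally, for independence of the admissible metric, note that for two admissible metrics $h_1,h_2$ the underlying linear cocycle is unchanged; only the gauge differs, through the comparison operator $A(v)$ with $h_2=h_1(A\,\cdot,A\,\cdot)$. Applying the growth estimate above to both metrics gives $\log\|A^{\pm1}\|=O(\log^+ y)\in L^1(\mu)$. Birkhoff's theorem then yields $\tfrac1n\log\|A^{\pm1}(g_nv)\|\to0$ for $\mu$-a.e.\ $v$ (the individual terms of a convergent Ces\`aro average tend to $0$), so $A$ is a tempered change of gauge; since Lyapunov exponents are invariant under tempered gauge transformations, the spectra attached to $h_1$ and $h_2$ agree.

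I expect the main obstacle to be the first step, namely the uniform polynomial bound $\|h\|,\|h^{-1}\|=O(y^M)$ for the admissible metric at the cusps, where the rotation part $T_s$ and the nilpotent $N$ must be controlled together and matched against the weight-filtration growth; once that estimate is in hand, the remaining steps are routine measure estimates together with the standard Oseledets and temperedness machinery.
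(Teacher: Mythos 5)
Your proposal is correct and follows essentially the same route as the paper: the paper likewise bounds the cocycle near a cusp by the winding estimate (a unit-time geodesic segment starting at height $y$ makes $O(y)$ turns, so non-expanding monodromy gives $\log^+\|A\|\lesssim \log y$), uses admissibility conditions ii) and iii) together with Cramer's rule to get two-sided polynomial-in-$y$ bounds on the admissible norm of flat sections, integrates $\log y$ against $dy/(4y^2)$, and invokes the $L^1(\mu)$-equivalence theorem of the appendix (proved via Birkhoff, exactly your temperedness argument) for independence of the metric. The only organizational difference is that the paper proves integrability first for the constant norm and then shows any admissible norm is $L^1$-equivalent to it, whereas you work with the admissible norm directly; also note that your polynomial bound on $h$ and $h^{-1}$ holds in the flat (multivalued) frame, not the Deligne frame, where the entries carry $|q|^{2\alpha}$ factors.
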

\par
For practical purposes (e.g.\ for numerical simulations) it is useful to be
able to compute the Lyapunov exponents with a simpler norm. We define a
{\em constant norm $||\cdot||_{\rm const}$} on $\VV$ to be the parallel transport
of any norm at the fiber over some base point~$c_0$ extended to a Dirichlet
fundamental domain for $\Gamma$ on~$\HH$, or equivalently, on a simply connected
complement of some geodesic ``boundary'' curves in~$C$. Note that 
the ``constant'' norm is not continuous across these boundary curves
and depends on the choice of the Dirichlet domain.
\par
\begin{Prop} \label{prop:constantmetric}
Any constant norm $||\cdot||_{\rm const}$ on a flat bundle as in
Theorem~\ref{thm:admisintegrable} is also  integrable
and computes the same Lyapunov exponents as any \adm\ metric.
\end{Prop}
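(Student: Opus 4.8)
The plan is to show that the constant norm $\|\cdot\|_{\rm const}$ and a fixed \adm\ metric $h$ are \emph{uniformly comparable away from the cusps} and \emph{differ only logarithmically near the cusps}, so that their difference is integrable and contributes nothing to the Lyapunov spectrum. Concretely, write $G_t^{\rm const}$ and $G_t^h$ for the geodesic-flow cocycle measured in the two norms. Since the two norms are defined on the same flat bundle and differ by a measurable positive-definite endomorphism field $A(x)$ with $\|v\|_{\rm const}^2 = \langle A(x) v, v\rangle_h$, the two cocycles are conjugate by the (tautological identity) cocycle, and the comparison of their singular values is governed entirely by $\log\|A\|$ and $\log\|A^{-1}\|$. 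The first step is therefore to reduce the statement to the integrability estimate
\be
\int_{T^1 C} \bigl(\, \log^+\|A(x)\| + \log^+\|A(x)^{-1}\|\,\bigr)\, \dd\mu(x) \;<\; \infty,
\ee
together with the standard cocycle lemma (which I would quote from the appendix) stating that two integrable cocycles differing by a tempered, or even just integrable-in-log, coboundary have the same Lyapunov exponents.

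Next I would localize. On any compact part $C_0 \subset C$ the constant norm is piecewise parallel-transported from $c_0$ across finitely many geodesic boundary curves of the Dirichlet domain, while $h$ is smooth; both are continuous and bounded above and below on each closed piece, so $\log\|A\|$ and $\log\|A^{-1}\|$ are bounded on $C_0$ and contribute a finite integral. The entire difficulty is thus concentrated in neighborhoods of the cusps $\Delta$. Here I would use the non-expanding hypothesis: since the monodromy around a cusp has all eigenvalues on the unit circle, parallel transport of a fixed vector around the cusp stays bounded, and the Jordan blocks can produce at most polynomial growth in the number of windings. In a standard cusp coordinate where the geodesic flow moves into the cusp at unit speed, the number of windings — and hence the relevant comparison factor between $\|\cdot\|_{\rm const}$ and the \adm\ metric (which by definition has the correct logarithmic growth dictated by the Deligne extension and the weight filtration) — grows at most \emph{linearly} in the flow time, i.e.\ logarithmically in the cusp coordinate. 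This is precisely the regime where integrability against $\mu$ holds, because $\mu$ assigns exponentially small mass to the cusp region at depth $t$, defeating the merely polynomial/logarithmic discrepancy. I would record this as the key local estimate: in a cusp neighborhood $\log^+\|A\|, \log^+\|A^{-1}\| = O(\log(1/|\text{cusp coord}|))$, which is $\mu$-integrable.

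With integrability of $\log\|A^{\pm 1}\|$ established, the proof concludes in two short moves. First, integrability of $\|\cdot\|_{\rm const}$ itself follows: the $G_t^{\rm const}$ cocycle differs from the already-integrable $G_t^h$ cocycle (integrable by Theorem~\ref{thm:admisintegrable}) by the boundary factor $A$, and $\log^+\|G_1^{\rm const}\| \leq \log^+\|G_1^h\| + \log^+\|A\| + \log^+\|A^{-1}\| \circ g_1$, each term integrable. Second, equality of the Lyapunov spectra follows from the cocycle cohomology lemma, since the logarithmic coboundary is integrable and hence tempered $\mu$-almost everywhere by the Borel--Cantelli/tempering argument, so the Oseledets filtrations and exponents of the two cocycles coincide.

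The main obstacle I anticipate is the cusp estimate on $A$: one must argue carefully that, despite the constant norm's \emph{discontinuity} across the Dirichlet boundary curves entering the cusp, the comparison with the \adm\ metric remains only logarithmically large rather than polynomially or exponentially large. This requires choosing the fundamental domain and the parallel-transport frame so that the monodromy is realized by a matrix with unit-modulus eigenvalues acting across the boundary, and then controlling the growth of $\rho(\gamma)^n$ for a possibly non-semisimple $\rho(\gamma)$ — where the polynomial Jordan-block growth must be weighed against the exponential decay of $\mu$ near the cusp. I expect this to be the crux, with the rest being the routine cocycle-comparison formalism already available from Theorem~\ref{thm:admisintegrable} and the appendix.
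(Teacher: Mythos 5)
Your overall architecture --- reduce to $L^1(\mu)$-equivalence of the two norms, dispose of the compact part by continuity, and invoke the cocycle-comparison theorem from the appendix --- is exactly the paper's route (its second lemma in Section~\ref{sec:pfintegrab} together with Theorem~\ref{th:equivalent:norms}), with one harmless reversal of direction: the paper proves integrability of the \emph{constant} norm directly by a winding-number count and then transfers it to the admissible norm, whereas you import integrability of the admissible norm from Theorem~\ref{thm:admisintegrable} and transfer it back. The problem is your key local estimate. You record it as $\log^+\|A\|,\ \log^+\|A^{-1}\| = O(\log(1/|q|))$ in the cusp coordinate; since $\log|q|=-2\pi y$, this is $O(y)$, and $\int_{y_0}^{\infty} y\,\frac{dy}{4y^2}$ diverges, so the bound as written does \emph{not} give $\mu$-integrability. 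The correct bound --- and the one your own heuristic ``linear in flow time against exponentially decaying mass'' actually describes --- is $O(\log y)=O(\log\log(1/|q|))$. Moreover it does not come from counting windings: a flat section transported through a single Dirichlet domain straight into the cusp does not wind at all, so its constant norm is literally constant along the vertical ray. What controls the pointwise comparison is the admissibility conditions: condition~ii) bounds the Deligne-frame sections by $|q|^{\alpha}(\log|q|)^{n}$, and the conversion~\eqref{eq:vivitilde}, $\wt v_i=\exp(2\pi i\alpha z+zN_\alpha)v_i$, turns this into a bound on $\|v_i\|_{\rm adm}$ that is polynomial in $y$, whence the log-ratio is $O(\log y)$ and $\int \log y\,\frac{dy}{4y^2}<\infty$.

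Second, you assert the two-sided bound on $\log\|A^{\pm1}\|$ but supply a mechanism for only one side. Non-expanding monodromy and the Jordan-block analysis give the \emph{upper} bound on the admissible norm of a flat section; nothing in your argument prevents the admissible metric from crushing some flat direction faster than any power of $\log y$, which would ruin the integrability of $\log^+\|A^{-1}\|$. The paper closes this with condition~iii) of admissibility (the lower bound on the norm of a generator of $\det\cVV$) combined with Cramer's rule, yielding a lower bound of the form $C'(\log|q|)^{-2n''}$ on the admissible norm of a nonzero flat section. To make your proof complete you must replace the $O(\log(1/|q|))$ claim by the $O(\log y)$ estimate derived from conditions~i)--ii) and~\eqref{eq:vivitilde}, and add the lower bound via condition~iii) and Cramer; with those two repairs your argument coincides with the paper's.
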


\subsection{Parabolic bundles and filtered vector bundles} \label{sec:parabolic}

We begin with the definition of a parabolic bundle (see also \cite{MeSeshPara}, 
\cite{seshadriPara} for the origins of this notion). 
We first define a {\em $[0,1)$-filtration} on a complex vector space $V$ to 
be a collection of (real) weights $0 \leq \alpha_1 < \alpha_2 < \ldots 
< \alpha_n < \alpha_{n+1}=1$
for some $n \geq 1$ together with a filtration of sub-vector spaces
$$ F^\bullet: \,\, V \= V^{\geq \alpha_1} \supsetneq V^{\geq \alpha_2}\supsetneq 
\cdots \supsetneq V^{\geq \alpha_{n+1}} = V^{\geq 1} = 0.$$
We denote by $\gr_{\alpha_i} V = $ the graded piece at weight $\alpha_i$.  
The {\em filtered dimension} of $(V, F^\bullet)$ is defined to be the real number
$$ \dim_{F^\bullet}(V) \= \sum_{i=1}^n \alpha_i \,\dim \gr_{\alpha_i}(V)\,.$$
The filtration is called {\em trivial}, if $n=1$ and $\alpha_1=0$. This is equivalent
to the condition $ \dim_{F^\bullet}(V) = 0$. 
\par 
Let $\cEE$ be a holomorphic vector bundle on a complex curve $\ol{C}$ and let~$\Delta$
be a finite set of ``boundary'' points. A {\em parabolic structure $(\cEE,F^\bullet)$} 
on $\cEE$ (with respect to $\Delta$) is a $[0,1)$-filtration $F^\bullet \cEE_c$ 
on the fiber $\cEE_c$ for each $c \in \Delta$. A {\em parabolic bundle}
is simply a holomorphic vector bundle with a parabolic structure. 
\par
The {\em parabolic degree} of 
$(\cEE,F^\bullet)$ is defined to be 
$$ \deg_{\rm par}(\cEE, F^\bullet) \= \deg(\cEE) + \sum_{c \in \Delta} \dim_{F^\bullet} \cEE_c\,.$$
\par
A {\em morphism} $\varphi$ between parabolic bundles $\cEE$ and $\cFF$ is
a morphism $\varphi: \cEE \to \cFF$ of holomorphic vector bundles such that for each
$c \in \Delta$ each weight $\alpha$ of $\cEE_c$ the image $\varphi(\cEE_c^{\geq \alpha})$
lies in $\cFF_c^{\geq \beta}$ whenever $\beta \leq \alpha$. A {\em parabolic subbundle}
$\cEE$ of $\cFF$ is an injective morphism of parabolic bundle with the additional
requirements that for each $c \in \Delta$ the weights of $\cEE$ are a subset of
the weights of $\cFF$ and if $\beta$ is maximal such that 
$\varphi(\cEE_c^{\geq \alpha}) \subseteq \cFF_c^{\geq \beta}$ then $\beta = \alpha$.
\par
With this notion of degree and subbundles we will recall later that the
usual notions of stability and of the Harder--Narasimhan filtration carry
over verbatim to the parabolic case. 
\par
For taking exterior powers it will be convenient to use the following equivalent
notion. A {\em filtered vector bundle} $\cEE = \{\cEE_{\bullet,\bullet}\}$
on~$\ol{C}$ is a collection $\cEE_{c,\alpha}$ of vector bundles in
$j_*^c \cEE_C$ for every $c \in \Delta$ and every $\alpha \in \R$
(where $j^c: C \to C \cup \{c\}$ is the inclusion),
such that the filtration is
descending ($\cEE_{c,\alpha} \subseteq \cEE_{c,\beta}$ if $\alpha \geq \beta$),
right continuous ($\cEE_{c, \alpha + \ve} = \cEE_{c,\alpha}$ for small~$\ve$) and 
such that $\cEE_{c, \alpha+1} = t \cEE_{c, \alpha} \subset \cEE_{c, \alpha}$,
where~$t$ is a local parameter at~$c$. 
To retrieve the corresponding bundle with parabolic structure we take the
extensions $\cEE_{c,0}$ at every point $c \in \Delta$ and the filtrations given by the
$\alpha \in [0,1)$ where the rank of the fibers of $\cEE_{c,\alpha}$ at~$c$ jumps. 
In particular, the notions of parabolic degree etc.\ defined 
above apply to filtered vector bundles as well. Obviously a filtered vector bundle
is completely determined by the extensions $\cEE_{c, \alpha}$ for $\alpha \in [0,1)$.
Conversely, given a vector bundle with parabolic structure $(\cEE,F^\bullet)$ we can
provide $\cEE$ with the structure of a filtered bundle $\cEE_{\bullet,\bullet}$ as 
follows. For every $\alpha \in \RR$ and $c \in \Delta$ we associate to a
section~$s$ of~$\cEE$ in a neighborhood of~$c$ the sections 
$s_\alpha = t^{\lfloor \alpha \rfloor} s$ resp.\
$s_\alpha =t^{\lfloor \alpha \rfloor +1}\, s$ 
depending on whether germ of~$s$ in the stalk of~$\cEE$ 
belongs to $V^{\geq \{\alpha\}}$ or not. We define $\cEE_{c,\alpha}$ to be the
subspace generated by all the section~$s_\alpha$ obtained in this way.

\subsection{The Deligne extension} \label{sec:DelExt}

Here we recall the construction of Deligne's extension of the bundle $\cVV_C$
with flat connection to a holomorphic vector bundle $\cVV$ on $\ol{C}$ with a logarithmic 
connection. The hypothesis on the non-expanding cusp monodromies
implies that $\cVV$
has a canonical\footnote{The choice of the interval $[0,1)$ is an artificial
choice of a unit interval in~$\RR$ and so Deligne (\cite{delequadiff}) calls this extension
{\em quasi-canonical}.} parabolic structure, as we now explain.
\par
To construct the Deligne extension of $\cVV_C$ we use a small disc $D$
centered around the point $c \in \Delta$ with coordinate $q$. We choose a
base point $c_0 \in D \setminus\{c\}$, the conjugation by moving the base
point will not affect the extension. We let $T = T(\gamma) \in \GL(V_0)$ be the monodromy 
of the the flat bundle $\VV$ along a loop $\gamma$ once around $c$, where
$V_0 = (\cVV_C)_{c_0}$ is the fiber over the base point $c_0$. For
every $\alpha \in [0,1)$ we can define
\be
W_\alpha \= \{ v \in V_0: (T - \zeta_\alpha)^{r}v = 0\}, \quad \text{where} \quad
\zeta_\alpha \=e^{2\pi i \alpha}  \quad \text{and} \quad r = {\rm rk}(\VV)\,.
\ee
These vector spaces are zero for all but finitely many $\alpha_i \in [0,1)$.
Finally, we define
\bes T_\alpha = \zeta_{\alpha}^{-1} T|_{W_\alpha} \quad \text{and} \quad
N_\alpha = \log T_\alpha\,,
\ees
since $T_\alpha$ is unipotent.
\par 
Let $q: \HH \to D^*$, $q(z) =e^{2\pi i z}$ be the 
covering of $D^* = D \sms \{c\}$. Choose a basis $v_1,\ldots,v_r$ of $V_0$ adapted to the
direct sum decomposition $V_0 = \oplus_\alpha W_\alpha$. Since $\HH$ is simply connected, 
we may view the $v_i$ as sections $v_i(z)$ of $q^* (\VV_C|_{D^*})$. If $v_i \in W_\alpha$, 
then we define
\be \label{eq:vivitilde}
\wt v_i(z) \= \exp(2\pi i \alpha z + z N_\alpha) v_i\,.
\ee
These sections are constructed to be equivariant under $z \mapsto z+1$, hence they give
global sections of $\cVV_C(D^*)$. The Deligne extension $\cVV$ of $\cVV_C$ is the 
vector bundle, whose space of sections over $D$ is 
the $\cOO_D$-module spanned by $\wt v_1,\ldots,\wt v_r$. 
\par
This construction naturally gives a parabolic structure on the special fiber 
$V_x = (\cVV_C)_{x}$. We let $V_\alpha$ be the subspace generated by the $\wt v_i$ with 
$v_i \in W_\alpha$ and we let $V^{\geq \alpha} = \oplus_{\beta \geq \alpha} V_\beta$ to obtain a 
filtration $F^\bullet_\gamma$ on $V_x$. 

\subsection{Metric extension, acceptable and admissible metrics} \label{sec:admmetric}

The notion of an admissible metric serves two technical purposes. On one hand 
it should specify the correct metric extension by imposing appropriate growth near 
the cusp while on the other hand giving an integrable flat bundle.  This section
follows the treatment of metric extensions of vector bundles and local systems
given in~\cite{SiConstructing}, Section~10 and ~\cite{SiNoncomp}.
\par 
As preparation for the definition, we first recall the notion of 
{\em metric extension}~$\Xi(\cEE_C)$
of a vector bundle~$\cEE_C$ on $C$. Let $j: C \to \ol{C}$ be the inclusion. Given a 
metric~$h$ on~$\cEE$ we define $\Xi(\cEE_C)$ to be the family of subsheaves 
of $j_*\cEE_C$ indexed by $\alpha \in \RR$ such that sections 
$s(q)$ of $\Xi(\cEE_C)^{\geq \alpha}$ are those holomorphic sections that satisfy 
the following ``growth'' condition. For all $\ve \geq 0$ there exists $C_\ve$ such 
that\footnote{\cite[Section~10]{SiConstructing} has a typo, the
exponent there is erroneously $\alpha + \ve$.}
\be \label{eq:growthbound}
 |s(q)|_h \,\leq\, C_\ve |q|^{\alpha - \ve}\,.
\ee
In general, the metric extension of a vector bundle is
a coherent sheaf, not a vector bundle. We will, however, use metric extensions only when
they are vector bundles, in fact Deligne extensions of local systems, see 
Lemma~\ref{le:hasadmmetric} below.
\par
Following~\cite{SiNoncomp} we say that a smooth metric
$h =\langle \cdot,\cdot \rangle$ on the bundle~$\cEE_C$ on the curve~$C$
(provided with the Poincar\'e metric) is {\em acceptable}, if the curvature
of the metric~$h$ admits locally near every $x \in \Delta$ a bound 
\be \label{def:accept}
 |R_h| \leq f + \frac{C}{|q|^2 |\log(q)|^{2}} \quad \text{with} \quad f \in L^p \quad
\text{for some} \quad p>1\,.
\ee
We also say that $h$ is an acceptable metric on a filtered vector bundle 
$\cEE = \{\cEE_{\bullet,\bullet}\}$ if the metric $h$ is acceptable on $\cEE|_C$ 
and $\cEE = \Xi(\cEE|_C)$.
\par
For integrability purposes we require for admissibility growth rates that are slightly 
more restrictive than~\eqref{eq:growthbound}, but obviously imply this bound.
\par
\begin{Defi}
A smooth metric $h =\langle \cdot,\cdot \rangle$ on the bundle~$\cVV_C$ 
with underlying local system~$\VV$ is called {\em admissible}, if for every cusp
$c \in \Delta$ with local coordinate $q$
\begin{itemize}
\item[i)] the metric extension $\Xi(\cVV_C)$ with respect to~$h$ is isomorphic 
as filtered vector bundle to the Deligne extension~$\cVV$ of~$\cVV_C$, 
\item[ii)] for any $e \in \Xi(\cVV_C)^{\geq \alpha}$ and any $e' \in 
\Xi(\cVV_C)^{\geq \alpha'}$ there is some $n \in \NN$ and $C_1=C_1(e,e') > 0$ 
independent of~$q$ such that 
$$\langle e,e' \rangle \,\leq\, C_1\, |q|^{\alpha+\alpha'} (\log |q|)^{2n}\,,$$
\item[iii)] there is some $n \in \NN$ and $C_2 > 0$ such that a generating section~$e$
of $\det(\cVV)$ has the lower bound
$$ ||e||^2  \,\geq\, C_2\, |q|^{2 \dim_{F^\bullet} \cVV_c} (\log |q|)^{-2n}\,.$$
\item[iv)] and, moreover, if the metric is acceptable.
\end{itemize}
\end{Defi}
\par
\medskip
In our situation, the relevant existence statement is the following lemma, 
that follows from Theorem~4 in~\cite{SiNoncomp}.
\par
\begin{Lemma} \label{le:hasadmmetric}
A local system $\VV$ with non-expanding cusp monodromies
has a metric which is admissible for its Deligne extension~$\cVV$. 
\end{Lemma}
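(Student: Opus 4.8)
The plan is to reduce everything to a local model near each cusp and to build the metric by hand there, invoking \cite{SiNoncomp} only for the one genuinely analytic input (the curvature bound). All four conditions i)--iv) are conditions on a punctured neighborhood $D^*$ of a cusp $c\in\Delta$; away from $\Delta$ any smooth Hermitian metric on $\cVV_C$ has bounded curvature and is harmless for acceptability. So I would fix small coordinate discs $D_c$ around the cusps, construct a smooth metric on $\cVV|_{D_c^*}$ satisfying i)--iv) for each $c$, choose any smooth metric on the compact complement $\ol{C}\setminus\bigsqcup_c D_c$, and patch them with a partition of unity subordinate to this cover. Since i)--iii) only constrain the leading growth as $q\to 0$ and the $L^p$ term $f$ in \eqref{def:accept} absorbs any bounded smooth contribution, the patching preserves all four conditions, and the global statement reduces to the local one.

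For the local model I would work in the Deligne frame $\wt v_1,\dots,\wt v_r$ of \eqref{eq:vivitilde}, adapted to the decomposition $V_0=\bigoplus_\alpha W_\alpha$ and, on each block $W_\alpha$, to the weight filtration of the nilpotent $N_\alpha$. Declaring the blocks $W_\alpha$ mutually orthogonal, I would set, for a frame vector $\wt v_i$ with $v_i\in W_\alpha$ sitting in weight $w_i$ of the $\mathfrak{sl}_2$-filtration of $N_\alpha$,
\[
 |\wt v_i|_h^2 \= |q|^{2\alpha}\,(-\log|q|)^{w_i},
\]
turned into an honest Hermitian form by the usual Schmid-type off-diagonal corrections. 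This is exactly the model metric of a tame local system, and it is the metric whose existence and near-cusp estimates are furnished by Theorem~4 of \cite{SiNoncomp} for monodromy with eigenvalues on the unit circle (not merely quasi-unipotent).

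Verifying i)--iii) is then formal. Because $|\exp(2\pi i\alpha z)|=|q|^\alpha$, the subsheaf of holomorphic sections $s$ with $|s|_h=O(|q|^{\alpha-\ve})$ for all $\ve$ is precisely the $\cOO$-span of the $\wt v_i$ of weight $\geq\alpha$; hence $\Xi(\cVV_C)\cong\cVV$ as filtered bundles, with jumps at the $\alpha_i$, giving i). For ii), expanding $e\in\Xi(\cVV_C)^{\geq\alpha}$ and $e'\in\Xi(\cVV_C)^{\geq\alpha'}$ in the Deligne frame with bounded holomorphic coefficients, the diagonal growth yields $\langle e,e'\rangle=O(|q|^{\alpha+\alpha'}(\log|q|)^{2n})$, the log power recording the nilpotent weights. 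For iii), the section $\wt v_1\wedge\cdots\wedge\wt v_r$ of $\det\cVV$ has squared norm $|q|^{2\sum_i\alpha_i}$ times a product of log factors; since $\sum_i\alpha_i=\dim_{F^\bullet}\cVV_c$ and the log factors are bounded below by $(\log|q|)^{-2n}$, iii) follows.

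The one real step is iv). Here I would compute the Chern curvature $R_h$ of the model in the frame $\wt v_i$. The pure power part $|q|^{2\alpha}$ contributes nothing, since $\log|q|$ is harmonic on $D^*$; all curvature arises from the nilpotent twist $\exp(zN_\alpha)$ and from the off-diagonal corrections needed to make $h$ Hermitian. For the $\mathfrak{sl}_2$-model these terms are controlled by the Poincar\'e metric, producing a leading singularity $|R_h|\leq C/\big(|q|^2(\log|q|)^2\big)$ together with a bounded remainder in $L^p$, which is exactly \eqref{def:accept}. I expect this curvature estimate to be the main obstacle: one must check that the off-diagonal terms forced by the weight filtration do not worsen the $1/\big(|q|^2(\log|q|)^2\big)$ bound, and it is precisely this estimate, valid for arbitrary non-expanding monodromy, that I would extract from \cite{SiNoncomp}. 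Once iv) is in hand, the patching argument of the first paragraph completes the proof.
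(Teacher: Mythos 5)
Your proposal follows essentially the same route as the paper: patch a smooth metric on the compact part with local model metrics near the cusps, declare the monodromy eigenblocks orthogonal, give the Deligne frame vectors $\wt v_i$ from the $\alpha$-block the norm $|q|^{\alpha}$, read off i)--iii) from the frame, and take the curvature bound from \cite{SiNoncomp}. The only deviation is cosmetic: the weight-filtration factors $(-\log|q|)^{w_i}$ and the ``Schmid-type off-diagonal corrections'' are unnecessary --- the plain diagonal metric $h(\wt v_i,\wt v_j)=\delta_{ij}|q|^{2\alpha_i}$ is already Hermitian and in fact flat near the cusp (since $\log|q|^{2\alpha}$ is harmonic), so acceptability is immediate rather than the ``main obstacle'' you anticipate.
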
 
\par
\par
\begin{proof} It suffices to construct such metrics locally and patch them 
with the help of a partition of unity. On the complement of cusp
neighborhoods we can take any metric. On the cusp neighborhoods it 
suffices to treat each eigenspace for the monodromy separately  
and declare the different eigenspaces to be pairwise orthogonal.
The basis elements $\widetilde{v_i}$ of the $\alpha$-eigenspace of the Deligne 
extension are given the norm $|q|^\alpha$ in the local coordinate $q$ around
the cusp and defined to be pairwise orthogonal. 
This implies that the Deligne-extension is the metric extension and
that the norm bounds ii) and iii) hold. The fact that such a metric
satisfies the curvature bound for being acceptable can be calculated directly, 
see also~\cite{SiNoncomp}, Section~5.
\end{proof}
\par
\medskip
In the proof of the main theorem it will be convenient to pass to exterior
powers. We now provide the necessary background in the case of parabolic bundles.
First note, that if the metric $h$ is acceptable on a bundle $\cEE$, then
the induced metric on any exterior power of $\cEE$ is again acceptable. There
are two natural ways to define its exterior powers as filtered vector bundles.
One is to declare $v_1 \wedge \cdots \wedge v_k$ to lie in $(\wedge^k \cEE)_{c,\alpha}$, 
if and only if $\alpha \leq \sum \alpha_i$ where $\alpha_i$ is maximal with 
$v_i \in \cEE_{c, \alpha_i}$.
The second possibility is to take $\Xi(\wedge^k(\cEE|_C))$. It is obvious from the
definition that $\wedge^k(\cEE)_\alpha \subseteq \Xi(\wedge^k(\cEE|_C))_\alpha$.
It was shown by Simpson (\cite{SiNoncomp}, Proposition~3.1,  using the calculations 
leading to \cite{SiConstructing}, Corollary~10.4, in particular the Remark on p.~911) 
that accessibility of~$h$ implies that the converse inequality also holds, i.e.\
\bes \wedge^k(\Xi(\cEE|_C)) \= \Xi(\wedge^k(\cEE|_C) \ees
and so both definitions of the exterior power agree.
\par
\begin{Prop} \label{prop:degwedgepower}
If~$\cEE$ is a vector bundle of rank $k$ then $\deg_{\rm par} \cEE = 
\deg_{\rm par} (\wedge^k \cEE)$.
Moreover, any acceptable metric $h$ computes the parabolic degree of $\cEE$, i.e.\ 
\bes
\deg_{\rm par}(\cEE, F^\bullet) \= \frac{1}{2\pi i} \int_C \partial \ol{\partial} 
\log (\det h_{ij}) \,,
\ees
where $h_{ij} = \langle e_i, e_j \rangle$ are the coefficients of the acceptable metric.
\end{Prop}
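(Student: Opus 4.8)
The plan is to establish the two assertions separately, using the first to reduce the second to a line bundle. For the equality $\deg_{\rm par}\cEE=\deg_{\rm par}(\wedge^k\cEE)$ I would argue entirely within the filtered-bundle formalism of Section~\ref{sec:parabolic}. Fix a cusp $c$ and a basis $\wt v_1,\dots,\wt v_k$ of the Deligne extension adapted to the filtration, with $\wt v_i$ of maximal weight $\beta_i\in[0,1)$, so that $\dim_{F^\bullet}\cEE_c=\sum_i\beta_i=:\gamma_c\in[0,k)$. By the combinatorial definition of the exterior power the top wedge $\wt v_1\wedge\dots\wedge\wt v_k$ has maximal weight $\gamma_c$; using $\cEE_{c,\alpha+1}=t\,\cEE_{c,\alpha}$, the generator of the level-$0$ extension of $\wedge^k\cEE$ is $q^{-\lfloor\gamma_c\rfloor}\wt v_1\wedge\dots\wedge\wt v_k$, of parabolic weight $\{\gamma_c\}$. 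Thus the underlying line bundle of $\wedge^k\cEE$ equals $\det\cEE_0\otimes\cOO(\sum_c\lfloor\gamma_c\rfloor\,c)$, and since $\deg\det\cEE_0=\deg\cEE_0$ and $\dim_{F^\bullet}(\wedge^k\cEE)_c=\{\gamma_c\}$, summing the integer and fractional parts yields $\deg_{\rm par}(\wedge^k\cEE)=\deg\cEE_0+\sum_c\gamma_c=\deg_{\rm par}\cEE$.

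For the Chern--Weil formula I would first reduce to the case of a line bundle. The coefficient $\det h_{ij}$ is exactly the metric induced by $h$ on $\det\cEE=\wedge^k\cEE$, and since exterior powers of acceptable metrics are acceptable with $\wedge^k\Xi(\cEE|_C)=\Xi(\wedge^k(\cEE|_C))$ (recalled before the proposition), $\det h$ is an acceptable metric on the parabolic line bundle $\det\cEE$, whose parabolic degree is $\deg_{\rm par}\cEE$ by the first part. It therefore suffices to prove, for an acceptable metric $h=\|e\|^2$ on a parabolic line bundle $\cLL$ with level-$0$ extension $\cLL_0$ and parabolic weights $\alpha_c\in[0,1)$, that $\frac{1}{2\pi i}\int_C\partial\ol\partial\log h=\deg_{\rm par}\cLL$.

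To evaluate this integral I would choose a global meromorphic section $\sigma$ of $\cLL_0$ and, away from its zeros and poles, rewrite the (frame-independent) curvature form as $\frac{1}{2\pi i}\partial\ol\partial\log\|\sigma\|_h^2$. Excising discs $D_\ve$ around the cusps and around the zeros and poles of $\sigma$ lying in $C$ and using $\partial\ol\partial=d\,\ol\partial$ on a curve, Stokes turns the integral into a sum of boundary integrals $\frac{1}{2\pi i}\int_{\partial D_\ve}\ol\partial\log\|\sigma\|_h^2$. At an interior zero or pole the Poincar\'e--Lelong computation returns $\operatorname{ord}_p\sigma$; at a cusp one has $\log\|\sigma\|_h^2=2(\operatorname{ord}_c\sigma+\alpha_c)\log|q|+\psi_c$, and the leading term contributes $\operatorname{ord}_c\sigma+\alpha_c$. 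Summing and using $\sum_{p\in\ol C}\operatorname{ord}_p\sigma=\deg\cLL_0$ gives $\frac{1}{2\pi i}\int_C\partial\ol\partial\log h=\deg\cLL_0+\sum_c\alpha_c=\deg_{\rm par}\cLL$.

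The main obstacle is the analytic justification at the cusps: one must show that $\partial\ol\partial\log h$ is integrable on $C$ and that the error term $\psi_c$ does not affect the limiting boundary integral, i.e.\ $\frac{1}{2\pi i}\int_{\partial D_\ve}\ol\partial\psi_c\to0$. This is precisely where acceptability enters. The curvature bound~\eqref{def:accept} gives integrability near each cusp, while the growth condition~\eqref{eq:growthbound} together with the requirement $\Xi(\cLL|_C)=\cLL$ built into acceptability of a filtered bundle forces $\psi_c=o(\log|q|)$ with controlled angular derivatives, so that the only quantity the boundary integral detects---its logarithmic growth rate---vanishes in the limit. I would extract this from Simpson's estimates (\cite{SiNoncomp} and the computations leading to \cite{SiConstructing}, Corollary~10.4); alternatively, comparing $h$ with the explicit model metric $\|\wt v_i\|=|q|^{\beta_i}$ of Lemma~\ref{le:hasadmmetric}, the two are mutually bounded acceptable metrics whose ratio is of sub-logarithmic growth, so that Stokes shows their Chern--Weil integrals agree and reduces the whole computation to the model, where each boundary integral is evaluated by hand.
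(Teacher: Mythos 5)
Your proposal is correct and follows essentially the same route as the paper: the first statement is read off from the combinatorial definition of the exterior power, the Chern--Weil formula is reduced to the determinant line bundle via that statement and the compatibility of acceptable metrics with exterior powers, and the parabolic weight then appears as the cusp boundary term in a Stokes computation. The only difference is presentational -- the paper compares $h$ with a smoothed metric $h_\ve$ and cites \cite{Kaw} for the boundary-term evaluation, while you use a global meromorphic section with Poincar\'e--Lelong and spell out the error control at the cusps via Simpson's estimates, which the paper leaves implicit.
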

\par
\begin{proof} 
The first statement is a direct consequent of the first definition of the exterior power.
\par
By the first statement and since $\det h_{ij}$ is the coefficient of the induced 
acceptable metric on the the $k$-th power (obvious from the second definition), 
we may suppose that $\cEE$ is a line bundle.
For any choice of a generating local section $e =e(q)$ near a point $c \in \Delta$
and a smooth metric $h_\ve$ that agree with $h$ outside $\ve$-neighborhoods
of the cusps we have (see e.g.\ \cite[page 60-61]{Kaw} for details)
\ba
\deg(\cEE) &\=  \frac{1}{2\pi i} \int_C \partial \ol{\partial} \log h_\ve \\
 &\=  \frac{1}{2\pi i} \Bigl( \int_C \partial \ol{\partial} \log h
\+ \sum_{c \in \Delta} \lim_{\ve \to 0} \int_{|q|=\ve} \ol{\partial} \log 
\langle e(q), e(q) \rangle \Bigr) \\
 &\=  \frac{1}{2\pi i} \int_C \partial \ol{\partial} \log h \,-\, \sum_{c \in \Delta} 
\dim_{F_c^\bullet} \cEE_c
\ea
and this proves the claim.
\end{proof}

\subsection{Proof of the integrability statements} \label{sec:pfintegrab}

It is obvious that in order to prove Theorem~\ref{thm:admisintegrable}
and Proposition~\ref{prop:constantmetric} it suffices to prove the following
two lemmas. We use the cocycle language for the flat bundle, as
introduced in the appendix.
\par

\begin{Lemma}
The cocycle~$A$ induced by the geodesic flow on a hyperbolic surface with cusps
on a normed flat bundle with non-expanding cusp monodromies
is integrable for a constant norm.
\end{Lemma}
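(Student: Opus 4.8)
The plan is to estimate the logarithm of the operator norm of the parallel-transport cocycle along geodesic segments and show its integral against the invariant measure $\mu$ on $T^1 C$ is finite. The constant norm is parallel along the interior of a Dirichlet fundamental domain $\mathcal{F}$, so the cocycle $A$ is trivial (the identity, up to isometry) as long as a geodesic segment stays inside $\mathcal{F}$. The norm jumps only when a geodesic crosses one of the finitely many geodesic boundary curves of $\mathcal{F}$, and there the transition is governed by the monodromy $\rho(\gamma)$ for the corresponding $\gamma \in \pi_1(C,c_0)$. Thus $\log^+\|A(g,x)\|$ is controlled by how many boundary crossings the geodesic makes in unit time and by the norms of the finitely many monodromy matrices attached to the side-pairings; the latter are fixed constants, so the integrability question reduces to a counting problem for geodesic excursions.

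**First I would** set up the combinatorics precisely: fix the Dirichlet domain, enumerate its side-pairing transformations $T_1,\dots,T_m$ and the corresponding monodromies, and bound $\log\|A(g_t,x)\|$ by $\sum \log\|\rho(T_{i_j})^{\pm 1}\|$ over the sequence of sides crossed by the geodesic from $x$ in time $t$. Taking $t=1$ (integrability need only be checked for a single time, as in Definition~\ref{def:meascocycle}), the key quantity to integrate is the number of boundary crossings in unit time, weighted by the maximal log-norm $M=\max_i \log\|\rho(T_i)^{\pm1}\|$ of the generators. **The main work** is then to prove that the function $x \mapsto (\text{number of side crossings of the unit-time geodesic from }x)$ lies in $L^1(\mu)$, together with the companion bound $\log^+\|A^{-1}\|$.

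**The hard part will be** controlling the excursions of the geodesic into the cusp neighborhoods, since near a cusp a geodesic can wind many times around the puncture — crossing the same side repeatedly — and the number of such windings is unbounded pointwise. This is exactly where the non-expanding hypothesis must enter, but note that for a \emph{constant} norm the relevant bound is purely metric and combinatorial: a geodesic that enters the horoball at height $y$ and returns makes $O(y)$ excursions, and the $\mu$-measure of the set of directions forcing a deep excursion decays fast enough (like the logarithmic law / the standard estimate $\mu\{x : \text{excursion depth} \geq T\} = O(1/T)$ for the cusp geometry, so that depth $\in L^1$ only marginally). I expect the decisive estimate to be that the expected number of crossings in unit time is finite, which follows from the fact that the geodesic flow on $T^1 C$ has integrable return times to a fixed compact part, combined with the finiteness of $\mu$. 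Concretely, I would invoke the logarithm law for cusp excursions or a direct Siegel-type computation showing $\int_{T^1 C} (\text{crossing count}) \, d\mu < \infty$, and then multiply by the constant $M$ to conclude integrability of both $\log^+\|A\|$ and $\log^+\|A^{-1}\|$.

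**A subtlety to flag** is that the windings around a cusp could a priori cause the \emph{constant} norm estimate to diverge even though the admissible norm tamely absorbs the monodromy — but since each individual crossing contributes only the bounded constant $M$, and the total crossing count is $L^1$, the constant norm remains integrable regardless of the eigenvalue condition. The non-expanding hypothesis is genuinely needed only to ensure the crossing count is finite for a.e.\ geodesic and to match the Lyapunov exponents with those of the admissible metric (the content of the companion lemma), rather than for integrability of the constant norm per se. I would therefore present the argument as: (i) reduce to counting crossings, (ii) bound each crossing by the fixed constant $M$, and (iii) cite the integrability of the cusp-excursion counting function against $\mu$, which is where I expect to lean on a standard geodesic-flow estimate.
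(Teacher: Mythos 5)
Your reduction to counting boundary crossings is the right starting point, and your treatment of the compact part matches the paper's. But the cusp estimate --- which is the whole content of the lemma --- is wrong, in a way that inverts the role of the hypothesis. You bound $\log^+\|A\|$ by $M\cdot N$, where $N$ is the number of side crossings in unit time and $M=\max_i\log\|\rho(T_i)^{\pm1}\|$, and then assert that $N\in L^1(\mu)$. Neither step survives near a cusp. A unit-time geodesic starting at height $y$ in the standard cusp coordinates can wind on the order of $y$ times around the puncture (the paper's bound is $2e^2y+1$), and this order of magnitude is achieved for a fixed positive proportion of directions. Since the cusp measure is $dx\,dy/(4y^2)$, one has $\int y\,dy/y^2=\infty$: the crossing count is \emph{not} $\mu$-integrable (your own tail estimate $\mu\{\text{depth}\geq T\}=O(1/T)$ gives a divergent expectation, not a marginally convergent one). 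So the additive bound $M\cdot N$ integrates to $+\infty$ and the argument collapses.

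The missing idea is that the $N\sim y$ crossings deep in a cusp are not $N$ independent generators but $N$ applications of the \emph{same} parabolic monodromy $T$, and the non-expanding hypothesis forces $\|T^N\|$ to grow polynomially in $N$, hence $\log^+\|T^N\|=O(\log N)=O(\log y)$. It is $\log y$, not $y$, that is integrable against $dy/(4y^2)$, and that is exactly how the paper concludes. Your closing claim that ``the constant norm remains integrable regardless of the eigenvalue condition'' is therefore false: Section~\ref{sec:nec} of the paper shows that a single eigenvalue off the unit circle makes $\log^+\|T^N\|\sim aN\sim ay$ on a set of directions of definite measure, and the constant-norm cocycle fails to be integrable. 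The non-expanding condition is precisely what turns the linear-in-$y$ bound into a logarithmic one; it is not, as you suggest, needed only for the comparison with the admissible metric.
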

\begin{proof}
We have to estimate the growth of the norm over a geodesic segment of
length one. Consider a complement $C_\varepsilon$ to a neighborhood
of cusps. If the starting point is located in $C_\varepsilon$ then
the geodesic segment of unit length starting at this point can cross
the boundary of the Dirichlet domain only finite number of times where the bound
is uniform for all starting points. Thus,
the growth of the constant norm is uniformly bounded for such
segment (where the bound depends on the flat bundle,
on the Dirichlet domain and on the choice of $\varepsilon$).

It remains to estimate the growth of the norm for a geodesic segment
of unit length starting in a small neighborhood of a cusp. Since the
boundary of the Dirichlet domain near a cusp is represented by a
geodesic ray going straight to the cusp, we have to count how many
times such a geodesic segment could turn around the cusp. Consider
standard coordinates in the neighborhood of the cusp, namely take a
half-strip $-\frac{1}{2}\le x\le \frac{1}{2}$, $y\ge y_0\gg 1$ in the
upper-half plane with coordinates $z=x+iy$ and with hyperbolic
metric $g$ of constant negative curvature $-4$,
\begin{equation}
\label{eq:g:hyp}
g\=\frac{|dz |^2}{4 (\operatorname{Im} z)^2} \= \frac{dx^2+dy^2}{4y^2}\,.
\end{equation}
The upper
bound of the number of turns around the cusp of a geodesic segment of unit 
length starting at a point $x+iy$ with $y\ge y_0\gg 1$
is given by the path which
first goes straight to the cusp for time $1$
and then  follows the closed horocycle
around the cusp for time $1$.
\par
The first segment starts at a point $x+iy$ and goes vertically up to
the point $x+ie^2y$.  The hyperbolic length of the
closed horocycle around the cusp located at the height $y=e^2y$ is
$\frac{1}{2e^2y}$, so the path following the closed horocycle for time~$1$ makes
at most $2e^2 y+1$ turns around the cusp. The condition on
non-expanding cusp monodromy implies that the norm of a constant
vector transported $N$ times around the cusp grows linearly in $N$.
Hence for $x+iy \in C_\varepsilon$ the growth of the constant norm
is bounded by 
$$
\max_{t\in[-1,1]}\log^+\!\|A(x+iy,t)\|\,<\, c_1 \log y +c_2
$$
for some constants $c_1,c_2\in\mathbb{R}$ depending on the flat bundle.
Clearly,
$$
\int_{-\frac{1}{2}}^{\frac{1}{2}} dx \int_{y_0}^{+\infty}
\left(c_1\log y + c_2\right)\frac{dy}{4y^2} \,<\, +\infty\,,
$$
and the integrability of the cocycle for the constant norm follows. 
\end{proof}
\par
The notion of \textit{equivalent norms} for integrable cocycles is
definitely known, see, for example, the corresponding
remark in~\cite{Ruelle}. However, since this notion is important in the 
context of this paper, for the sake of completeness
we collect all necessary details in the appendix.
\par
\begin{Lemma}
A constant norm and an admissible norm~$h$ are $L^1(\mu)$-equivalent.
\end{Lemma}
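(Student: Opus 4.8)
The plan is to reduce the statement to a pointwise comparison of the two norms in the cusp coordinates and then to exploit that the discrepancy grows only logarithmically. Recall (see the appendix) that the two norms are $L^1(\mu)$-equivalent exactly when $\delta(x) = \log K(x)$ is $\mu$-integrable, where $K(x)\ge 1$ is the smallest constant with $K(x)^{-1}\|v\|_{\rm const}\le \|v\|_h\le K(x)\|v\|_{\rm const}$ for all $v\in\VV_x$. Since both norms are pulled back from $C$ and are independent of the direction in $T^1C$, it suffices to prove $\delta\in L^1$ for the (normalized) hyperbolic area on $C$. On a compact complement $C_\varepsilon$ of the cusp neighborhoods both norms are continuous and positive, so $\delta$ is bounded there, and the entire issue is the behaviour as $y\to\infty$ inside each cusp strip \eqref{eq:g:hyp}. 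I would prove that $\delta(x)=O(\log y)$ on each strip, whereupon $\int_{-1/2}^{1/2}\!\int_{y_0}^\infty(\log y)\,\frac{dx\,dy}{4y^2}<\infty$ gives the claim (the finitely many geodesic boundary curves of the Dirichlet domain, across which $\|\cdot\|_{\rm const}$ jumps, form a null set).

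The engine of the estimate is the explicit Deligne frame $\widetilde v_1,\dots,\widetilde v_r$ of \eqref{eq:vivitilde}, which I would compare to the model admissible metric $h_0$ of Lemma~\ref{le:hasadmmetric} (in which the $\widetilde v_i$ are orthogonal with $\|\widetilde v_i\|_{h_0}=|q|^{\alpha_i}$). First I would compute the constant-norm Gram matrix $G_{ij}=\langle\widetilde v_i,\widetilde v_j\rangle_{\rm const}$ on the strip, where $\|\cdot\|_{\rm const}$ is given by the flat frame from $c_0$. Writing $\widetilde v_i=e^{2\pi i\alpha_i z}\,e^{zN_{\alpha_i}}v_i$, the factor $e^{zN_{\alpha_i}}v_i$ is a polynomial in $z$ with \emph{flat} coefficients, so its constant norm is polynomial in $y$ (with $x$ bounded), while the scalar $e^{2\pi i\alpha_i z}$ contributes exactly $|q|^{\alpha_i}$ in modulus. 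Hence $D^{-1}GD^{-1}$, with $D=\diag(|q|^{\alpha_i})$, has entries bounded by polynomials in $y$; moreover its determinant is the \emph{constant} $c_0=\det(\langle v_i,v_j\rangle_{\rm const})>0$, because the change of basis from the flat frame to the $\widetilde v_i$ has determinant $e^{2\pi iz\sum_i\alpha_i}$ (the unipotent factors $e^{zN_\alpha}$ drop out). A standard eigenvalue bound — all but one eigenvalue of a positive matrix are controlled by its largest, and their product is the determinant — then shows $G$ is comparable to $D^2$, i.e.\ to $h_0$, up to polynomial-in-$y$ factors, so $\log$ of this comparison is $O(\log y)$.

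The same linear-algebra lemma handles a general admissible $h$. Here I would use only the defining bounds ii) and iii): dividing $H_{ij}=\langle\widetilde v_i,\widetilde v_j\rangle_h$ by $|q|^{\alpha_i+\alpha_j}$, condition ii) bounds the entries of $\widetilde H=D^{-1}HD^{-1}$ by $C(\log|q|)^{2n}$, while condition iii), applied to $e=\widetilde v_1\wedge\cdots\wedge\widetilde v_r$ (for which $\|e\|^2=\det H$ and $\dim_{F^\bullet}\cVV_c=\sum_i\alpha_i$), bounds $\det\widetilde H$ below by $C_2(\log|q|)^{-2n}$. The eigenvalue bound then places every eigenvalue of $\widetilde H$ between two fixed powers of $\log|q|$, equivalently of $y$ since $\log|q|=-2\pi y$, so $h$ is comparable to $h_0$ up to factors $y^{\pm N}$ and again $\log$ of the comparison is $O(\log y)$. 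Combining the two comparisons through the common model $h_0$ yields $\delta(x)=O(\log y)$ near every cusp, completing the proof.

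I expect the main obstacle to be the admissible-versus-model step: the admissibility axioms furnish only \emph{one-sided} control — upper bounds on the individual Hermitian products in ii) and a lower bound on the determinant in iii) — and the work is to upgrade this to a genuine two-sided operator comparison of the two metrics. The elementary but essential observation making this possible is that an upper bound on all entries together with a lower bound on the determinant of a positive-definite matrix pins down \emph{both} extreme eigenvalues; everything else — the constant-norm computation and the final integration of $\log y$ against $\frac{dx\,dy}{4y^2}$ — is routine.
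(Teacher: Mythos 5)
Your proof is correct and follows essentially the same route as the paper's: localize at the cusps, use the Deligne frame $\wt v_i$ together with admissibility conditions ii) and iii) to bound the ratio of the two norms above and below by powers of $\log|q|\sim y$, and conclude by the convergence of $\int \log y\,\tfrac{dy}{4y^2}$. Your Gram-matrix/eigenvalue packaging (entries bounded above plus determinant bounded below pins both extreme eigenvalues), routed through the model metric of Lemma~\ref{le:hasadmmetric}, is the same linear algebra that the paper carries out directly on flat sections via Cramer's rule.
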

\begin{proof}
Consider standard coordinates in the neighborhood of the cusp, namely
take a half-strip $-\frac{1}{2}\le x\le \frac{1}{2}$, $y\ge y_0$ in
the upper-half plane with coordinates $z=x+iy$ and hyperbolic
metric $g$ as in~\eqref{eq:g:hyp}. Consider a geodesic ray
$\{x_0 +iy\,|\, y\ge y_0\}$ going straight to the cusp. Consider a
section $\vec v_{x_0+iy)}$ of the flat bundle over the geodesic ray constant with
respect to the flat connection.
The coordinate $q$ in a punctured disk around the cusp
is related to our coordinate~$z$ as above as
$$
2\pi i z \=\log q \quad \text{i.e.} \quad \log|q| \= -2\pi y\,.
$$
By condition i) of admissibility the flat section $\vec v_{x_0 +iy}$ can 
be expressed on the half-strip as a linear combination of either the basis 
elements~$\tilde{v_i}$ or the basis elements~$v_i$ introduced along with
the definition of the Deligne extension. By  condition ii) of admissibility
the sections $\exp(-2\pi i\alpha z)\tilde{v_i}$ is bounded above by
$C \log|q|^{2n}$ for some~$C$ and~$n$. By the conversion~\eqref{eq:vivitilde} 
the flat sections~$v_i$ and hence also $\|\vec v_{x_0 +iy} \|_{\rm adm}$
is bounded above by $C \log|q|^{2n'}$ (for an appropriate choice of the constant, 
depending on the monodromies $N_\alpha$).
The lower bound for the determinant given by 
condition iii) of admissibility and Cramer's rule imply that the norm
of such a non-zero flat section is bounded below by $C' \log|q|^{-2n''}$. 
Thus the ratio of a constant norm and an
admissible norm is uniformly bounded in the complement of
neighborhoods of the cusps and has the form
$$
\max_{\vec v\in\mathcal{V}_{x_0 + iy}\setminus \vec 0}
\left|\log\frac{\|\vec v\|_{\rm adm}} {\|\vec v\|_{\rm const}}\right|
=
\max_{\vec v\in\mathcal{V}_{x_0 + iy}\setminus \vec 0}
\left|\log\frac{\|\vec v\|_{\rm const}}{\|\vec v\|_{\rm adm }}\right|
\le
K \log y\,.
$$
in the local coordinates in the
neighborhood of a cusp. The integral
$$
\int_{-\frac{1}{2}}^{\frac{1}{2}} dx \int_{y_0}^{+\infty}
\log y\, 
\frac{dy}{4y^2}
$$
converges, so the constant norm and the admissible norm are
$L^1$-equivalent, and Theorem~\ref{th:equivalent:norms} implies that
the cocycle corresponding to the admissible norm is integrable and
defines the same Lyapunov exponents as the one corresponding to the
constant norm. 
\end{proof}

\subsection{Necessity of the non-expanding condition} 
\label{sec:nec}

We remark that if there exists a cusp $c_0$ of $C$ such that 
at least one of the eigenvalues of the monodromy around this cusp
has absolute value different from one, then the flat bundle~$\VV$
is not integrable with respect to the constant norm. 
\par
\begin{proof}
Suppose that the starting point $p=x+iy$ of a geodesic segment is
located sufficiently high in the cusp, that is $y\ge y_0\gg 1$ in
coordinates~\eqref{eq:g:hyp}. We consider the geodesics launched from
$p$ in direction $\xi$ from the subset
$[\frac{\pi}{6},\frac{\pi}{3}]\cup[\frac{2\pi}{3},\frac{5\pi}{6}]\subset[0,2\pi]$.
The direction is chosen to make the geodesic spiral toward the cusp
so that its $y$ coordinate still grows at least for some uniform
starting time $\varepsilon(y_0)>0$ depending only on parameter $y_0$.
We have chosen our geodesic to go not too steep to the cusp. The
angle between the geodesic $\gamma_{t}(p,\xi)$ as above and the
vertical direction would only grow for $t\in[0,\varepsilon]$, so the
horizontal projection of the geodesic has speed at least
$\frac{1}{2}$ for the entire interval of time $[0,\varepsilon]$. Since
the cusp at height $y$ has width $\frac{1}{2y}$ and for the
time $\varepsilon$ the geodesic does not get below the initial height
$y$, we conclude that in the interval of time $[0,\varepsilon]$ it
makes at least $y\varepsilon-1$ turns around the cusp.

Suppose that there is an eigenvalue of the mondromy around the cusp
such that its absolute value is different from one. Let
$a\neq 0$ be the logarithm of this absolute value.
The calculation above shows that for any geodesic as above we have
$$
\sup_{t\in[-\varepsilon,\varepsilon]}\log^+\!\|A(\gamma_{t}(p,\xi))\|_{\rm const}\ge (y\varepsilon-1)\cdot a\,.
$$
The subset of starting directions allowed above
has $1/6$ of the measure of all unit circle.
Since the integral of the function $y$ is diverging
with respect to the measure~\eqref{eq:g:hyp} near the cusp, this implies that
the integral
$$
\int_{T^1 C} \ \ \sup_{t\in[-1,1]}\log^+\!\|A(\gamma_{t}(p,\xi))\|_{\rm const}\, d\mu(x)
$$
is diverging and, hence, that the flat bundle $\VV$ is not integrable.
\end{proof}

\section{Existence of Lyapunov exponents for variations 
of Hodge structures} \label{sec:ExLVHS}

In this section we show that the Hodge metric for families of varieties
or more generally for a real variation of Hodge structures satisfies
the admissibility assumption of Section~\ref{sec:admmetric}. 
For a variation of Hodge structures we sketch, moreover, that there are 
{\em uniform} bounds for 
the Lyapunov exponents depending only on rank and weight of the VHS. An interesting
open problem is to prove sharp estimates and interpret the families that
reach the upper bounds geometrically.
\par
\smallskip
We recall the definition of real and complex variations of Hodge structures (VHS), 
also to introduce the Hodge metric. A {\em  $\CC$-VHS} on the curve~$C$ consists 
of a complex local system~$\VV_\CC$ with connection $\nabla$
and a decomposition of the Deligne extension $\cVV = \bigoplus_{p\in\ZZ} \cEE^{p}$ into
$C^{\infty}$-bundles, such that 
\begin{itemize}
\item[i)] $\cFF^p := \bigoplus_{i\geq p} \cEE^{i}$ are holomorphic subbundles
and $\ol{\cFF^p} := \bigoplus_{i\leq p} E^{p}$ are antiholomorphic
subbundles for every $p \in \ZZ$ and
\item[ii)] the connection shifts the grading by at most one, i.e.\ 
$\nabla(\cFF^p) \subset \Omega^1_C \otimes \cFF^{p-1}$ and 
$\nabla(\ol{\cFF^p}) \subset \Omega^1_C \otimes \ol{\cFF^{p+1}}$.
\end{itemize}
\par
\smallskip
To define the notion of $\RR$-VHS we first recall that for a 
\emph{real Hodge structure of weight $\ell$} on $W$, we require a decomposition 
$W\otimes_\RR \CC = \oplus_{p=0}^\ell W^{p,\ell-p}$, such 
that $\ol{W^{p,q}}= W^{q,p}$. An \emph{$\RR$-VHS of weight $\ell$} over the base $C$ 
consists of a $\RR$-local system $\VV$  and a filtration
$$ 0 = \cFF_0 \subset \cFF_1 \subset \cFF_2 \cdots \cFF_{\ell-1} \subset \cFF_\ell \subset 
\cVV$$
on the Deligne extension of $\VV$  with the property that the bundles
$\cHH^{p,q} = \cFF^p \cap \ol{\cFF^q}$ fiberwise define an $\RR$-Hodge structure.
\par
An $\RR$-VHS $\WW$ is {\em polarized}, if there exists a non-degenerate, 
locally constant bilinear form $Q(\cdot,\cdot)$ on $\WW$, skew for $\ell$ odd
and symmetric for $\ell$ even, such 
that $Q(\cHH^{p,q},\cHH^{r,s}) = 0$, unless $p=s$ and $q=r$, and such that 
$i^{p-q} Q(v,\ol{v}) >0$ for every non-zero $v \in \cHH^{p,q}$. Consequently
if we define an endomorphism $S$ of $\VV \otimes_\RR \cOO_C$ by
$S(v) = i^{p-q}v$ for $v \in \cHH^{p,q}$, then the {\em Hodge scalar product}
$h(v,w) =Q(Sv,\ol{w})$ is positive definite. We let $||\cdot||_h$ 
be the associated {\em Hodge norm} of $\VV_\CC$. It is obtained by interpreting $\VV$ 
as the direct sum of the smooth subbundles $\cHH^{p,q}$ and by using the 
positive definite metric on each of them.
\par
For any family of projective varieties $f: X \to C$ the $\ell$-th cohomology
gives a polarized $\RR$-VHS of weight $\ell$ in this sense. 
\par
Note that by a theorem of Borel (see e.g.\ \cite{schmid73} Lemma~4.5) the 
non-expanding cusp monodromy
hypothesis holds. If the local system underlying the VHS has a $\ZZ$-structure
(or arises as a direct summand of the cohomology of a family of varieties)
then the monodromies around the cusps are moreover {\em quasi-unipotent}.
\par
\begin{Prop} \label{prop:HodgeAdm}
The Hodge metric on $\VV$ is admissible.
\end{Prop}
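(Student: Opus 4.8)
The plan is to verify the four conditions i)--iv) of admissibility locally near each cusp $c \in \Delta$, using the asymptotic estimates for the Hodge norm that go back to Schmid's $\mathrm{SL}_2$- and nilpotent orbit theorems (\cite{schmid73}), together with their refinement to curvature bounds. Since the non-expanding cusp monodromy hypothesis holds for any $\RR$- or $\CC$-VHS by Borel's theorem (as noted above), the Deligne extension $\cVV$ with its parabolic structure is defined, and it suffices to compare the Hodge metric $h$ with the model metric used to construct $\cVV$. As in that construction I would decompose $V_0 = \oplus_\alpha W_\alpha$ into the generalized eigenspaces of the monodromy $T$, write $T_\alpha = \zeta_\alpha^{-1}T|_{W_\alpha}$ with $N_\alpha = \log T_\alpha$, and work with the flat multivalued frame $\wt v_i(z) = \exp(2\pi i\alpha z + zN_\alpha)v_i$, reducing everything to one cusp and (by orthogonality of eigenspaces in the model) to one eigenvalue at a time.

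The central input is the norm estimate for flat sections. Combining the semisimple and unipotent contributions, Schmid's theorem states that a flat section $v$ lying in the eigenspace $W_\alpha$ and in the $\ell$-th step $W_\ell(N_\alpha)$ of the monodromy weight filtration, but not in $W_{\ell-1}(N_\alpha)$, satisfies, up to positive constants,
\[
\|v(z)\|_h \;\asymp\; |q|^{\alpha}\,\bigl(-\log|q|\bigr)^{(\ell - w)/2}
\]
near the cusp, where $w$ is the central weight. Since $\exp(2\pi i \alpha z)$ has modulus $|q|^\alpha$ (using $\log|q| = -2\pi y$), this shows that each Deligne basis section $\wt v_i \in W_\alpha$ has Hodge norm comparable to $|q|^\alpha$ times a fixed power of $-\log|q|$. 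The polynomial log factors are absorbed by the $\ve$ in the growth condition~\eqref{eq:growthbound}, so a holomorphic section lies in $\Xi(\cVV_C)^{\geq \alpha}$ precisely when its leading $|q|$-exponent is at least $\alpha$; this is exactly the filtration defining $\cVV$, which yields condition i).

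Conditions ii) and iii) then follow formally from this two-sided estimate. For ii), applying Cauchy--Schwarz to $e \in \Xi(\cVV_C)^{\geq\alpha}$ and $e' \in \Xi(\cVV_C)^{\geq \alpha'}$ gives $|\langle e,e'\rangle| \leq \|e\|_h\,\|e'\|_h \leq C_1\,|q|^{\alpha+\alpha'}(\log|q|)^{2n}$. For iii), a generating section of $\det(\cVV)$ is $\wt v_1 \wedge \cdots \wedge \wt v_r$, whose squared norm is the Gram determinant $\det(\langle \wt v_i, \wt v_j\rangle)$; the lower bound in Schmid's estimate forces the leading $|q|$-exponent of this determinant to be exactly $2\sum_\alpha \alpha\,\dim W_\alpha = 2\dim_{F^\bullet}\cVV_c$, with at worst a negative power of $\log|q|$, which is precisely the required lower bound.

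The remaining point, and the one requiring genuinely more than the norm estimates, is acceptability iv): the Hodge curvature $R_h$ must satisfy the bound~\eqref{def:accept} with Poincar\'e-type growth $C/(|q|^2|\log q|^2)$ plus an $L^p$ term. This I would deduce from the curvature estimates for Hodge bundles, in which the curvature is expressed through the second fundamental forms (Kodaira--Spencer maps) of the Hodge filtration; the asymptotic analysis of Schmid, refined by Cattani--Kaplan--Schmid, shows that these grow at most like the Poincar\'e metric near the cusp, so that $R_h$ has at worst Poincar\'e growth. I expect this curvature bound to be the main obstacle, since here one must control derivatives of the period map rather than merely the size of flat sections; the norm estimates underlying i)--iii), by contrast, are an essentially direct translation of the orbit theorems into the language of metric and Deligne extensions.
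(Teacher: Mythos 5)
Your proposal is correct and follows essentially the same route as the paper, which simply cites Schmid's norm estimates (restated in Peters, Proposition~2.2.1) for conditions i)--iii) and the curvature estimates for Hodge bundles (Peters, Example~3.2, going back to Schmid and Cattani--Kaplan--Schmid) for acceptability iv); you have merely unpacked those citations. The only point worth flagging is that in iii) a two-sided bound on the norms of the individual $\wt v_i$ alone does not bound the Gram determinant from below (the frame could degenerate to parallel vectors), so one really needs the asymptotic orthogonality of the eigenspace/weight decomposition that Schmid's estimates provide, which is exactly what the cited statements assert.
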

\par
\begin{proof} The corresponding estimates were first derived by Schmid 
(\cite{schmid73}).
They are restated in \cite{Pet84}, see Proposition~2.2.1 for the growth rates
and Example~3.2 for how to derive the curvature estimate for acceptability.
\end{proof}
\par
The following result gives a second proof of integrability in this case. Recall that
 $G_t$ denotes the lift of the geodesic flow $g_t$.
\par
\begin{Prop} \label{prop:LyapCoareUpper}
For a VHS the function $x\mapsto \sup_{t\in [0,1]} \log^+ \|G_t\|_x$ is bounded
by a constants depending on the rank and the weight only. Consequently, 
the Lyapunov exponents of a VHS are bounded by a constants depending on the 
rank and the weight only.
\end{Prop}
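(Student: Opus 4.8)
The plan is to bound the infinitesimal rate of change of the Hodge norm of a flat section along the geodesic flow by a constant depending only on the rank $r$ and weight $\ell$, and then to integrate over the unit time interval. Recall that the Hodge metric $h$ on~$\VV$ is admissible (Proposition~\ref{prop:HodgeAdm}), so by Theorem~\ref{thm:admisintegrable} it computes the genuine Lyapunov exponents and the Oseledets machinery applies. I would decompose the flat connection as $\nabla = \nabla' + \theta + \theta^*$, where $\nabla'$ is the connection compatible with~$h$ that preserves the Hodge decomposition $\cVV = \bigoplus_{p,q} \cHH^{p,q}$, where $\theta \in A^{1,0}(C, \End \cVV)$ is the Higgs field (a $(1,0)$-form with values in endomorphisms lowering the Hodge filtration by one, by Griffiths transversality, i.e.\ the grading-shift condition in the definition of a VHS), and where $\theta^*$ is the $h$-adjoint of~$\theta$.

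First I would fix a flat section~$v$, so that $\nabla v = 0$ and hence $\nabla'_{\partial_t} v = -(\theta + \theta^*)_{\partial_t} v$ for the unit-speed geodesic direction~$\partial_t$. Since $\nabla'$ is compatible with~$h$, the Cauchy--Schwarz inequality yields the pointwise estimate
\be
\left| \frac{d}{dt} \log \|v\|_h \right| \,\leq\, 2\,\|\theta_{\partial_t}\|_{\rm op}\,,
\ee
where $\|\theta_{\partial_t}\|_{\rm op}$ is the operator norm of the endomorphism obtained by contracting~$\theta$ with~$\partial_t$. Integrating over $t \in [0,1]$ along the geodesic and taking the supremum over nonzero flat sections~$v$ then bounds $\sup_{t \in [0,1]} \log^+ \|G_t\|_x$ by $2 \sup_t \|\theta_{\partial_t}\|_{\rm op}$, so it remains only to bound the Higgs field uniformly.

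The crux is a pointwise bound on $\|\theta_{\partial_t}\|_{\rm op}$ for~$\partial_t$ a unit tangent vector for the Poincar\'e metric of curvature~$-4$, with a constant depending only on~$r$ and~$\ell$. This is the distance-decreasing property of period maps: the differential of the period map $\Phi \colon \HH \to D$ lands in the horizontal tangent directions of the period domain~$D$, and the holomorphic sectional curvature of~$D$ along these directions is bounded above by a negative constant~$-\kappa$. Since there are only finitely many period domains with given rank~$r$ and weight~$\ell$ (the Hodge numbers are constrained by~$r$ and~$\ell$), one may take~$\kappa = \kappa(r,\ell)$ to be the worst case over this finite list. The Ahlfors--Schwarz lemma, comparing the Poincar\'e metric on~$\HH$ with the invariant metric on~$D$, then gives $\|\theta_{\partial_t}\|_{\rm op}^2 \leq 4/\kappa$ (using that the operator norm is dominated by the Hodge-theoretic length of $d\Phi(\partial_t)$), uniformly on~$\HH$ and hence on~$C$ after descending through the $\Gamma$-equivariance. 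This establishes the first assertion, with constant $C(r,\ell) = 4/\sqrt{\kappa}$.

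The statement on the Lyapunov exponents then follows by subadditivity: since $\log \|G_T\|_x \leq \sum_{j=0}^{\lceil T \rceil - 1} \sup_{t \in [0,1]} \log^+ \|G_t\|_{g_j x}$ and the one-step norm is uniformly bounded, the top exponent $\lambda_1 = \lim_{T \to \infty} \tfrac1T \log \|G_T\|$ is at most $C(r,\ell)$, and the remaining exponents are no larger. I expect the uniform Higgs-field bound of the third paragraph to be the main obstacle: the negativity of the horizontal holomorphic sectional curvature is classical, but isolating a bound~$\kappa$ depending only on rank and weight rather than on the individual VHS requires the homogeneous geometry of the period domain, which I would extract from the Griffiths--Schmid curvature formulas.
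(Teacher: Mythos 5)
Your proposal follows essentially the same route as the paper: the decomposition $\nabla = \nabla' + \theta + \theta^*$ and the Cauchy--Schwarz step are exactly the paper's estimate of $\tfrac{\partial}{\partial t}\log h(v(t),v(t))$ via the graded pieces $\sigma_p$ and their adjoints, and the uniform bound on the Higgs field via Griffiths transversality, the negative horizontal holomorphic sectional curvature of the period domain, and the Ahlfors lemma is precisely the paper's argument (yielding $\|\sigma\|\leq\sqrt{|k|}/\sqrt{|K|}$ with $k=-4$). Your explicit remark that the curvature constant can be taken uniform because there are only finitely many period domains with given rank and weight is a detail the paper leaves implicit, but the proof is the same.
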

\par
We make no attempt here to make the estimate precise, since the bound from 
the estimate below is very rough.
\par
\begin{proof}
Let $D$ be the period domain for polarized weight $\ell$ Hodge structures 
with dimensions  of the filtration pieces as given by $\VV$. In general, 
$D$ is not a symmetric domain but just a homogeneous space. The tangent bundle
to $D$ has the so-called horizontal subbundle $T_h$ with two properties.
First, by Griffiths transversality the tangent vectors to the period map
$p: \HH \to D$ for $\VV$ lie in $T_h \subset T_D$. Second, the 
holomorphic sectional curvature of directions in $T_h$ is negative and
bounded away from zero (\cite[Theorem~9.1]{GriSc}), \cite[Chapter~13]{CMSP}), 
say by $K$. This contractivity along the horizontal 
distribution implies the integrability as we now elaborate.
\par
To provide a universal bound it it suffices to bound 
for $\tfrac{\partial}{\partial t} \log h(v(t),v(t))\, |_{t=0} $, 
where $v(t)$ is the parallel transport of a unit norm vector $v$ along $g_t$.
We decompose $v(t) = \sum v^{p,q}(t)$ into its Hodge components and
write $\sigma = \sum \sigma_p$ for the graded pieces $\sigma_p: 
\cHH_p \to \cHH_{p+1}$ of the Gauss-Manin connection contracted against a
unit tangent vector at $t=0$ in the direction of $g_t$. Expanding into
components, we obtain
\ba & \phantom{\,\leq \,2} \frac{\partial}{\partial t} \log h(v(t),v(t)) |_{t=0} 
 \= \frac{\tfrac{\partial}{\partial t} h(v(t),v(t))}{h(v,v)} \\
& \,\leq\, 2\, \frac{\sum_{p=0}^{\ell-1} h(\sigma_p (v^{p,\ell-p}), v^{p+1,\ell-p-1}) + 
h(\sigma_{p+1}^\dagger (v^{p+1,\ell-p-1}), v^{p,\ell-p})}
{\sum_{p=0}^\ell h(v^{p,\ell-p},v^{p,\ell-p})} \, ,
\ea
where $\sigma^\dagger: \cHH_{p+1} \to \cHH_{p}$ is the adjoint of $\sigma$. From
this expression it is obvious that it suffices to bound from above the operator norms
of all the maps $\sigma_p$, hence of $\sigma$. Since $D$ is homogeneous and
finite dimensional, any two norms are comparable, so we may as well bound
the euclidean norm $\sigma$. But since $\sigma$ is just the derivative
of the period map $p$, we can now invoke the Ahlfors-Lemma in the
version of \cite[Theorem~2]{Roy} to obtain the 
bound $||\sigma||_2 \leq ||dp||_2 \leq \sqrt{|k|}/\sqrt{|K|}$, where
$k=-4$ is the curvature of $\HH$ in the convention we use. 
\end{proof}
\par

\section{The bad locus and the main estimate} \label{sec:badandmain}

Suppose  that  we are given a $\CC$-local system $\VV$ of
rank~$r$ over a curve~$C$ with 
non-expanding cusp monodromies. 
Let  $\Delta$  be  the  set  of  boundary  points  of $C$ and recall
that by assumption $\chi(C)=-\deg\Omega^1_C(\Delta)<0$.
Denote by $\lambda_1\ge\dots\ge\lambda_r$  the Lyapunov exponents of $\VV$
with respect to the norm $\|\cdot\| = \|\cdot\|_h$ stemming from 
an admissible metric~$h$ as given by Theorem~\ref{thm:admisintegrable}.
Note that the metric on~$\VV$  naturally equips the dual bundle~$\VV^\vee$ 
with an admissible 
metric $\|\cdot\|_\vee$ defined by $\|u\|_\vee = \sup_{v \neq 0} |u(v)|/\|v\|_h$, 
which is admissible as well (\cite{SiNoncomp}, Theorem~4), and can be used 
to compute the Lyapunov exponents of~$\VV^\vee$.
\par
In this section we prove a conjecture of Fei~Yu~\cite{Fei}, or more
precisely, a generalization from the context of VHS to the case of
local systems with non-expanding cusp monodromies.
\par
\begin{Thm} \label{thm:main_estimate}
If $\cEE \subset \cVV$ is a holomorphic parabolic subbundle of
rank $k$ of the Deligne extension $\cVV$ of $\VV \otimes_\CC \cOO_C$, then 
\be
\label{eq:main:ineqver2}
\sum_{i=1}^k       \lambda_i       \,\geq\,
\frac{2\deg_{\rm par}(\cEE)}{\deg\Omega^1_{\ol{C}}(\Delta)} \=
\frac{2\deg_{\rm par}(\cEE)}{2g(\ol{C})-2+|\Delta|}
\,. \ee
\end{Thm}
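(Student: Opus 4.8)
The plan is to reduce to the rank-one case by passing to exterior powers, and then to bound $\lambda_1$ from below by the normalized parabolic degree of a holomorphic line subbundle, using that a flat frame on $\HH$ is holomorphic together with the Chern--Weil description of the degree in Proposition~\ref{prop:degwedgepower}. \textbf{Reduction.} Applying $\wedge^k$ turns the rank-$k$ subbundle $\cEE \subset \cVV$ into a line subbundle $\wedge^k \cEE \subset \wedge^k \cVV$. The exterior power of an admissible metric is acceptable and realizes the Deligne extension of $\wedge^k \VV$, so Theorem~\ref{thm:admisintegrable} applies and the Lyapunov exponents of $\wedge^k \VV$ are the sums $\lambda_{i_1} + \cdots + \lambda_{i_k}$, the largest being $\lambda_1 + \cdots + \lambda_k$. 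Since $\deg_{\rm par}(\cEE) = \deg_{\rm par}(\wedge^k \cEE)$ by Proposition~\ref{prop:degwedgepower}, it suffices to treat $k=1$: given a holomorphic line subbundle $\cLL \subset \cVV$, show $\lambda_1 \geq 2\deg_{\rm par}(\cLL)/(2g(\ol{C}) - 2 + |\Delta|)$.

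\textbf{Comparison of two growth rates.} Fix a measurable unit section $u$ of $\cLL$ for the admissible metric and let $G_t u(p)$ denote its parallel transport along the geodesic $g_t$. Because $\lambda_1$ is the top exponent and the cocycle is integrable (Theorem~\ref{thm:admisintegrable}), the limit $\lim_t \tfrac{1}{t} \log \|G_t u(p)\|$ exists for $\mu$-a.e.\ $p$ and is bounded by $\lambda_1$, so its $\mu$-average $R$ satisfies $R \leq \lambda_1$. On the other hand, projecting back onto the line $\cLL$ only decreases the norm, giving $\|G_t u(p)\| \geq |\langle G_t u(p), u(g_t p)\rangle|$, so $R$ is bounded below by the average exponential growth of the line-to-line matrix coefficient of the cocycle. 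The whole argument thus hinges on identifying this last average with the normalized degree.

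\textbf{Identification with the degree.} Here I would exploit that a flat frame on $\HH$ is holomorphic: a local holomorphic section $s$ of $\cLL$ lifts to a holomorphic $V_0$-valued map, and $\log \|s\|_h$ descends to a function on $C$ whose $\partial \ol{\partial}$ is the first Chern form of $(\cLL, h)$, of total mass $\deg_{\rm par}(\cLL)$ by Proposition~\ref{prop:degwedgepower}. Writing the flat connection as $\nabla = \nabla_{\cLL} + \beta$ with $\beta$ the second fundamental form, the infinitesimal logarithmic derivative of the line-to-line coefficient along $g_t$, integrated over the circle fibers of $T^1 C$, is precisely the curvature density of $(\cLL, h)$ against the hyperbolic area measure; the $(1,0)$-type of $\beta$ coming from holomorphicity of $\cLL$ forces the discarded cross term to have a definite sign, which is exactly what produces an inequality rather than an identity. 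Integrating this infinitesimal rate along orbits and invoking the ergodic theorem then lower-bounds $R$ by the curvature average, and a Chern--Weil computation combined with the Gauss--Bonnet normalization $\vol(C) = \tfrac{\pi}{2}(2g(\ol{C}) - 2 + |\Delta|)$ for the curvature-$(-4)$ metric converts this into exactly $2\deg_{\rm par}(\cLL)/(2g(\ol{C}) - 2 + |\Delta|)$, yielding \eqref{eq:main:ineqver2}.

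\textbf{Main obstacle.} The delicate points are concentrated at the cusps. One must ensure the time and fiber integrals converge — this is where non-expanding monodromy and admissibility enter through Theorem~\ref{thm:admisintegrable} — and, more seriously, that the boundary contributions of the Chern--Weil integral reassemble into the parabolic part $\sum_{c \in \Delta} \dim_{F^\bullet} \cLL_c$ of $\deg_{\rm par}(\cLL)$, matching the residue terms in Proposition~\ref{prop:degwedgepower}. Justifying the interchange of the limit $t \to \infty$ with the $\mu$-integral, and the passage from the pointwise growth rates to the Birkhoff average of the infinitesimal expansion uniformly enough near the cusps, is the technical heart; the positivity of the second fundamental form contribution, which upgrades the Forni--Kontsevich equality for the full bundle to the desired one-sided bound for a non-flat subbundle, is the conceptual heart.
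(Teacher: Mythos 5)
Your reduction to $k=1$ via exterior powers matches the paper, but the core of your argument --- the lower bound for a line subbundle --- has a genuine gap in the step ``Identification with the degree''. You pair the parallel transport $G_t u(p)$ against the unit section of $\cLL$ using the \emph{Hermitian} metric $h$. The function $z \mapsto h(v_{\mathrm{flat}}(z), \omega(z))$, with $\omega$ a holomorphic frame of $\cLL$ and $v_{\mathrm{flat}}$ a flat section, is merely smooth: $h$ is neither flat nor holomorphic, so the logarithm of the modulus of this pairing is not subharmonic, and averaging its infinitesimal logarithmic derivative over circle fibers does not produce the curvature density of $(\cLL,h)$ plus a term of definite sign. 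The assertion that ``the $(1,0)$-type of $\beta$ forces the discarded cross term to have a definite sign'' is precisely the point that needs proof, and I do not see how to establish it for a general holomorphic subbundle of a general flat bundle with an admissible (non-flat) metric; the Gauss--Codazzi identity gives curvature \emph{decreasing} in holomorphic subbundles relative to the restricted ambient curvature, which points in the wrong direction for your purpose. You also never address the vanishing locus of your pairing (the transported vector can become $h$-orthogonal to $\cLL$), so the integrability and genericity issues that the paper handles in Lemma~\ref{lm:discrete:subset} are left open.

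The paper's mechanism is different and is what makes the sign work: it first passes to the dual local system $\VV^\vee$ (using the symmetry of the spectrum and $\lambda_i(\VV)=-\lambda_{r+1-i}(\VV^\vee)$) and defines the seminorm $\|u\|_{\cEE}=|\omega_c(u)|/\|\omega_c\|_h$ on $\VV^\vee$ using the \emph{flat evaluation pairing}. For $u$ a flat section of $\VV^\vee$ and $\omega$ holomorphic, the function $z\mapsto\omega_z(u)$ on $\HH$ is holomorphic, so $\Delta_{\hyp}\log|\omega_z(u)|\ge 0$ (harmonic away from the bad locus $\{\omega_z(u)=0\}$, positive point masses there), whence $\Delta_{\hyp}\log\|u_z\|_{\cEE}\ge-\Delta_{\hyp}\log\|\omega_z\|_h$, whose integral over $C$ is the parabolic degree by Proposition~\ref{prop:degwedgepower}; Lemma~\ref{lm:norm:over:seminorm} then transfers the bound to the admissible norm. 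In other words, the positivity you attribute to the second fundamental form is in truth the positivity of the zero divisor of a holomorphic function, and obtaining that holomorphic function is exactly why the paper dualizes. To repair your proposal you would need to replace the Hermitian matrix coefficient by this flat pairing (or prove the claimed sign of the cross term, which I believe fails in general).
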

\par
We  do not assume that the flat bundle $\VV$ is irreducible. Clearly,
the  theorem is applicable to every irreducible summands of~$\VV$, so 
if~$\VV$  is reducible,  we can decompose $\VV$ into a direct sum of irreducible
summands and obtain finer estimates by applying the theorem
individually to each irreducible summand.
\par
\medskip
\par
The condition ``parabolic subbundle'' refers to the parabolic structure on $\cVV$
introduced in Section~\ref{sec:DelExt}. This condition is void for unipotent
monodromies. For Teichm\"uller curves one can always restrict to this case.
In fact, we can in this case (tacitly) replace $C$ by a finite unramified
covering such that the local monodromies around the cusps in $\Delta$
are unipotent. This is always possible, since in general local monodromies
are quasi-unipotent and since $\pi_1(C)$ is finitely generated and free
if $C$ is not compact. This base change does not modify Lyapunov exponents
and multiplies numerator and denominator of the right hand side 
of~\eqref{thm:main_estimate} by the degree of the covering.
\par
\medskip
We prepare for the proof with three reduction steps. First note that
the Lyapunov spectrum of~$\VV$ is symmetric with respect to zero, i.e.\ 
$-\lambda_{r+1-\ell} = \lambda_\ell$ for any~$\ell$. This follows since
the geodesic flow
in negative time has on the one hand the negative of the Lyapunov spectrum
for every flow and on the other hand (due to the ${\rm SL}_2(\RR)$-action on $\HH$) 
the flow in positive and negative time are conjugated and have consequently
the same Lyapunov spectrum.
\par
Second, we remark that $\lambda_i(\VV) = - \lambda_{r+1-i}(\VV^\vee)$. 
Combining these two observations it suffices to prove that 
\be
\label{eq:main:ineqproven}
\sum_{i=1}^k       \lambda_i(\VV^\vee)       \,\geq\,
\frac{2\deg_{\rm par}(\cEE)}{\deg\Omega^1_{\ol{C}}(\Delta)} \,. \ee
\par
Moreover, we remark that it suffices to prove the 
theorem for the case when  $\cEE$  is a \textit{line} bundle, i.e.\ to treat the
case $k=1$. In fact, given a parabolic subbundle $\cEE \subseteq \cVV$  as in 
statement of the theorem, the exterior power $\cLL = \wedge^k \cEE$ is a
parabolic subbundle of $\wedge^k  \cVV$ (obvious from the second definition
of exterior powers as defined in Section~\ref{sec:admmetric}) and $\wedge^k  \cVV$ 
is the Deligne extension of $\wedge^k \VV \otimes  \cOO_C$. (This also
follows from the first defining property of an \adm\ metric and the
compatibility of the metric with taking exterior powers.)
Since $\deg_{\rm par}(\cEE) = \deg_{\rm par}(\cLL)$ by 
Proposition~\ref{prop:degwedgepower} and since the top Lyapunov exponent of 
$\wedge^k \VV$ is just $\sum_{i=1}^k \lambda_i$ the claimed reduction to $k=1$ 
follows.
\par
Mimicking the idea of~\cite{kontsevich} we define an auxiliary norm on the
dual bundle~$\cVV^\vee$ by defining for any point $u$ in the total space 
of~$\cVV^\vee$
\be
\label{eq:seminorm:of:L}
||u||_{\cEE} \,:=\,
\frac{|\omega_c(u)|} {\sqrt{|h(\omega_c,\omega_c)|}}
\= \frac{|\omega_c(u)|} {\|\omega_c\|_h}\, ,
\ee
where $\omega_c$ is a nonzero element of the fiber $\cEE_c$ over
the point $c$ in $C$.
This seminorm is well-defined, i.e.\ it does not depend on the
choice  of the nonzero vector $\omega_c$ in $\cEE_c$, since numerator
and denominator are homogeneous of the same degree in $\omega_c$.
\par
The  difference  with the standard case of weight  one   
(see~\cite{kontsevich}, \cite{forni02}, \cite{bouwmoel}, 
\cite{cyclicekz}) is that the numerator can indeed become zero. We call
the locus where the numerator in~\eqref{eq:seminorm:of:L} vanishes
the {\em bad locus} with respect to $\cEE$, that is, we define
$$ T^\bad \= \{(c,u)\, \colon \omega_c(u)=0\} $$
as a subset of the total space of the bundle~$\cVV_C$ over~$C$.
\par
Since Lyapunov exponents are defined by parallel transport, we really need
a definition of the bad locus that records all translates of a given vector.
Let $p:\HH\to C$ denote the universal cover. The flat structure on $\cVV_C^\vee$
provides a trivialization of $\p^* \cVV_C^\vee$. Using the parallel transport
of section given by this trivialization we define for $u \in \cVV_C^\vee$
the \textit{bad locus of $u$}
as
\be \label{eq:Tbad}
T^\bad(u) \= \{ z \in \HH\ \colon \omega_z(u) =0\}\, . 
\ee
Here  $\omega_z$  is  a  generator in the fiber $p^\ast\cEE_z$ of the
induced bundle $p^\ast\cEE$ over $\HH$. In other words, $T^\bad(u)$ is
the  set  of  points  $z$ in $\HH$ for which the fiber $p^\ast\cEE_z$
of the line bundle $p^\ast\cEE$ gets inside the hyperplane $\Ann u$.
\par
\begin{Lemma}
\label{lm:discrete:subset}
For every $c \in C$ there is a countable union of hyperplanes $H$ in $\cVV_c^\vee$
such that for $u \in \cVV_c^\vee \setminus H$ the bad locus
$T^\bad(u)$ is a discrete subset of $\HH$.
\end{Lemma}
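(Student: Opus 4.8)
The plan is to recognize $T^\bad(u)$ as the zero set of a single holomorphic function on $\HH$ and to invoke the rigidity (identity) theorem. First I would fix $c \in C$, a preimage $z_0 \in p^{-1}(c)$, and identify $V_0 = \cVV_c$. Since $\nabla$ is a flat holomorphic connection on $\cVV_C$ and $\HH$ is simply connected, I would trivialize $p^*\cVV_C$ holomorphically by a frame $s_1,\dots,s_r$ of flat sections; this is exactly the trivialization of $p^*\cVV_C^\vee$ used to spread a covector $u \in \cVV_c^\vee$ by parallel transport, under which $u$ has constant coefficients.

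Next, because $\HH$ is contractible, the holomorphic line subbundle $p^*(\cEE|_C)$ is holomorphically trivial, so I would choose a global nowhere-vanishing holomorphic section $z \mapsto \omega_z \in V_0 \setminus\{0\}$ and write $\omega_z = \sum_{i=1}^r g_i(z)\, s_i$ with $g_i$ holomorphic. The defining function $f_u(z) := \omega_z(u) = \sum_{i=1}^r g_i(z)\, u(s_i)$ is then holomorphic on $\HH$, and by construction $T^\bad(u) = \{z \in \HH : f_u(z)=0\}$ (geometrically, $T^\bad(u)$ is the preimage of the hyperplane $\PP(\ker u)$ under the holomorphic map $z \mapsto [\omega_z] \in \PP(V_0)$).

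Since $\HH$ is a connected domain, the identity theorem gives that $f_u$ either vanishes identically or has only isolated zeros; thus $T^\bad(u)$ is non-discrete precisely when $f_u \equiv 0$, i.e.\ when $u$ annihilates $\omega_z$ for all $z$. To package the exceptional $u$ as a countable union of hyperplanes I would fix a countable dense subset $\{z_n\}_{n\ge 1} \subset \HH$ and put $H = \bigcup_{n\ge 1} \Ann(\omega_{z_n})$; each $\omega_{z_n}$ being nonzero, every $\Ann(\omega_{z_n})$ is a genuine hyperplane in $\cVV_c^\vee$. For $u \notin H$ one has $f_u(z_n) = \omega_{z_n}(u) \neq 0$ for every $n$, so $f_u \not\equiv 0$ and $T^\bad(u)$ is discrete.

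The only steps needing care, rather than any genuine obstacle, are that $\nabla$ be holomorphic and flat, so that $f_u$ comes out holomorphic (not merely smooth), and that $\omega_z$ be globally defined, holomorphic and nowhere zero on $\HH$, so that the $\Ann(\omega_{z_n})$ are honest hyperplanes; both follow from the simple connectivity of $\HH$ and from $\cEE|_C$ being an actual line subbundle of $\cVV_C$. In fact a single point $z_1$ already suffices for the conclusion, since $f_u(z_1)\neq 0$ forces $f_u \not\equiv 0$; I would state the exceptional set as a countable union only because $\{u : f_u \equiv 0\} = \Ann\!\big(\operatorname{span}_z \omega_z\big)$ is then described uniformly, independently of identifying that span.
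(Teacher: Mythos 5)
Your argument is correct and is essentially the paper's own proof: both recognize $T^\bad(u)$ as the zero locus of a holomorphic function on $\HH$, invoke the identity theorem to conclude it is either discrete or all of $\HH$, and note that the latter forces $u$ into a hyperplane of the form $\Ann(\omega_z)$. The only cosmetic difference is the bookkeeping of the countable union: the paper takes it over the $p$-preimages of $c$ (to account for the choice of lift implicit in defining $T^\bad(u)$), while you take it over a countable dense set of evaluation points and correctly observe that a single point already suffices.
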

\par
\begin{proof}
Since $\cEE$ is a holomorphic subbundle, locally $T^\bad(u)$ is given
as  the vanishing locus of a holomorphic function. Thus, for any given $u$ 
the locus $T^\bad(u)$ is either discrete in $\HH$ or equal to $\HH$. The second
possibility might only occur on the hyperplane $\Ann~\omega_c$. (The countable union 
results from the choice of a $p$-preimage.)
\end{proof}
\par
One can prove in fact that, if the flat bundle $\VV$ is irreducible over $C$, 
the subbundle of $\cVV^\vee$ given by those $u$ for which $T^\bad(u)=\HH$ is actually 
the zero bundle. Next, we compare the admissible metric and the 
$\|\cdot\|_{\cEE}$-seminorm.
\par
\begin{Lemma}
\label{lm:norm:over:seminorm}
For any point $c$ of the curve $C$ and for any $u$
in the fiber $\cVV_c^\vee$ over~$c$ 
\begin{equation}
\label{eq:norm:over:seminorm}
\cfrac{\|u\|_\vee}{\|u\|_\cEE} \,\ge\, 1\,.
\end{equation}
\end{Lemma}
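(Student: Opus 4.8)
The plan is to unwind the two definitions and to observe that the seminorm $\|u\|_\cEE$ is literally one of the quotients entering the supremum defining the dual norm $\|u\|_\vee$. Recall that the dual admissible metric is given by
\[
\|u\|_\vee \= \sup_{v \neq 0} \frac{|u(v)|}{\|v\|_h}\,,
\]
the supremum ranging over all nonzero vectors $v$ in the fiber $\cVV_c$, while by~\eqref{eq:seminorm:of:L}
\[
\|u\|_\cEE \= \frac{|u(\omega_c)|}{\|\omega_c\|_h}\,,
\]
where $\omega_c$ is a chosen nonzero generator of the line $\cEE_c \subset \cVV_c$ and $u(\omega_c)$ denotes the natural duality pairing between $\cVV_c$ and $\cVV_c^\vee$.

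First I would note that $\omega_c$, being a nonzero vector of $\cVV_c$, is an admissible choice of test vector $v$ in the supremum above. Hence the supremum is bounded below by the single value obtained at $v = \omega_c$, which yields
\[
\|u\|_\vee \,\ge\, \frac{|u(\omega_c)|}{\|\omega_c\|_h} \= \|u\|_\cEE\,.
\]
Dividing by $\|u\|_\cEE$ gives exactly~\eqref{eq:norm:over:seminorm}, with the convention that the ratio equals $+\infty$ on the bad locus, where $\|u\|_\cEE = 0$ and the inequality $\|u\|_\vee \ge \|u\|_\cEE$ holds trivially.

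I expect no genuine obstacle: the statement reduces to the elementary fact that a supremum dominates each of its terms. The only point deserving a line of justification is the identification of the quantity $\omega_c(u)$ in~\eqref{eq:seminorm:of:L} with the value $u(v)$ at $v = \omega_c$, which is just the symmetry of the duality pairing; the independence of $\|u\|_\cEE$ from the scaling of $\omega_c$ has already been recorded just after its definition. In particular the bound holds for every $u \in \cVV_c^\vee$, with equality precisely when $\omega_c$ realizes the dual norm, i.e.\ when $\omega_c$ is (proportional to) a vector maximizing $|u(v)|/\|v\|_h$.
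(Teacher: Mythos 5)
Your proof is correct and is essentially the paper's own argument: the paper likewise deduces $|\omega_c(u)|\le \|\omega_c\|_h\cdot \|u\|_\vee$ directly from the definition of the dual norm, which is exactly your observation that the supremum defining $\|u\|_\vee$ dominates its value at the test vector $v=\omega_c$. Your additional remarks on the bad locus and the case of equality are harmless elaborations of the same one-line argument.
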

\par
\begin{proof} The definition of the norm on the dual bundle implies
 $ |\omega_c(u)|\le \|\omega_c\|_h\cdot \|u\|_\vee$, implying the claim.
\end{proof}
\par
\begin{proof}[Proof of Theorem~\ref{thm:main_estimate}]
Pull back the flat bundle and the holomorphic linear subbundle
$\cEE$ to the universal cover $\HH$ over $C$.
For any $z\in \HH$ and for almost any $u$ in the fiber $\VV_z^\vee$ over $z$
one can express the Lyapunov exponent $\lambda_1(\VV^\vee)$
(see~\cite{eskozo}, \S 3.2) as
$$
\lambda_1(\VV^\vee) \= \lim_{T \to \infty} \frac{1}{T} \frac{1}{2\pi} \int_0^{2\pi}
\log ||g_T r_\theta u||_\vee d\theta\,.
$$
\par
Now we replace the admissible norm $\|\cdot\|_\vee$
by the seminorm $\|\cdot\|_\cEE$. Lemma~\ref{lm:norm:over:seminorm} implies 
the inequality 
\begin{equation}
\label{eq:intermidiate}
\lim_{T \to \infty} \frac{1}{T} \frac{1}{2\pi} \int_0^{2\pi}
\log ||g_T r_\theta u||_\vee d\theta
\,\ge\,
\lim_{T \to \infty} \frac{1}{T} \frac{1}{2\pi} \int_0^{2\pi}
\log ||g_T r_\theta u||_\cEE d\theta\,.
\end{equation}
   %
A priori, the limit in the right-hand-side might be equal to
$-\infty$. As an outline for the remaining proof, we want to run the standard 
argument (compare e.g. \cite{forni02},
or \cite{eskozo} \S  3.2--3.3, see \cite{kappesmoe}, proof of Theorem~3.3 with $n=1$ 
and $\kappa = 1/2$, for details allowing to trace the origin of the normalizing 
factor~$2$ in the numerator
given our curvature conventions) for computing Lyapunov exponents  in
terms of degree of holomorphic subbundle.  We use that from the definition of the 
seminorm $\|g_t r_\theta L\|_\cEE$ in~~\eqref{eq:seminorm:of:L} we get
$$
\log ||g_t r_\theta u||_\cEE \=
\log(|\omega_{g_t r_\theta z}(g_t r_\theta u)|) -
\log(\|\omega_{g_t r_\theta z}\|_h)\,.
$$ 
In contrast to the classical case we need to consider the Laplacian of the 
first summand on the right hand side. Away from $T^{\bad}(g_tr_\theta u)$ the
argument of the logarithm is a non-zero holomorphic functions and since $\Delta_{\hyp}$ 
is proportional to $\partial \overline{\partial}$ this contribution vanishes.
Near a bad point, the local contribution is the integral of $\Delta_{\hyp} \log(|z|^n)$
for some positive~$n$, hence positive. Altogether, we argued (by integrating
over the hyperbolic disc~$D(u)$ around the base point of ~$u$ swept out 
by $g_t r_\theta$) that, for almost every~$u$,
\be \label{eq:logcomparison}
\int_{D(u)} \Delta_\hyp \log ||u_z||_\cEE dg_{\rm hyp} (z)
\,\geq \, -\int_{D(u)}
\Delta_\hyp \log(\|\omega_{z} \|_h) dg_{\rm hyp} (z) \,.
\ee
Here $u_z$ is the parallel transport of~$u$ to the point $z \in D(u)$.
This inequaliy will imply that $\lambda_1(\VV^\vee)$ is greater
or equal to the parabolic degree of~$\cEE$, suitably normalized.
\par
To be self-contained, we reproduce this computation in detail.
Let~$D_t$ be the hyperbolic disc of radius $t$ and
$\Delta_{\hyp}$ be the Laplacian for the hyperbolic metric~$g_\hyp$
on~$D_t$.  In the following chain of (in)equalities, we first apply
an extra averaging over the unit tangent bundle~$T^1C$. Next, we
apply  a version of Green's formula (\cite[Lemma~3.1]{forni02} or
\cite[Lemma 3.6]{eskozo}) for the disc $D_t(u)$ centered around the base 
point of~$u$ of hyperbolic radius~$t$. The subsequent inequality follows 
from~\eqref{eq:logcomparison}. 
Then we exchange the $T$-limit and the $C$-integration,
justified by dominated convergence due to the accessibility of the metric~$h$. 
The resulting double integration over $C$ and $D_t({u})$ both 
just shift the base point
and can be subsumed into a single integration. To pass to the next
line, we use that the integrand no longer depends on~$T$ and
interchange the order of integration again. Finally we pass from 
$\Delta_{\rm hyp}$ to $\partial \ol{\partial}$.
\begin{align*}
\vol(C) \lambda_1(\VV^\vee)
& \geq \int_{T^1C} \lim_{T \to \infty} \frac{1}{T} \frac{1}{2\pi} \int_0^{2\pi}
\log ||g_T r_\theta u||_\cEE d\theta \dd \mu_{T^1C}(u)  \\
     &=  \int_{T^1C} \lim_{T \to \infty} \frac{1}{T} \frac{1}{2\pi} \int_{0}^T
	\frac{\dd}{\dd t} \int_0^{2\pi} \log \|g_t r_\theta u\|_\cEE
	\dd \theta \dd t \dd \mu_{T^1C}(u)
\\
     & =  \int_{T^1C} \lim_{T \to \infty} \frac{1}{T}\int_{0}^T
	\frac{\tanh(t)}{2\vol(D_t)} \int_{D_t({u})} \!\!\!\!\!\!
\Delta_{\rm hyp} \log \|u_z\|_\cEE
	\dd g_{\hyp}(z) \dd t \dd \mu_{T^1C}(u)
\\
     & \geq  \int_{C} \lim_{T \to \infty} \frac{1}{T}\int_{0}^T
     \frac{\tanh(t)}{2\vol(D_t)} \int_{D_t({u})} \!\!\!\!\!\!\!\!\! -\Delta_{\rm hyp}
\log \|\omega_{z}\|_h  \dd g_{\hyp}(z) \dd t \dd g_\hyp
\\
     & =  \lim_{T \to \infty} \frac{1}{T}\int_{0}^T
     {\tanh(t)} \dd t
     \,\,
     \int_{C}  -\frac12 \Delta_{\hyp} \log \|\omega_z\|_h \dd g_\hyp(z)
\\
     & =  -\frac12 \int_{C}  \Delta_{\hyp} \log \|\omega_z\|_h\, \dd g_\hyp(z)
      \=  -\frac14 \int_{C}  \Delta_{\hyp} \log |\det h_{ij}|\, \dd g_\hyp(z)
\\
     & = -\frac14 \int_{C}4
        \frac{\partial^2}{\partial z \partial\bar z}
        \log|\det h_{ij}|\,
\frac{i}{2}\,dz\wedge d\bar z      \= \frac{1}{2i}
\int_{C}\partial \ol{\partial}
\log|\det h_{ij}|
\\
     & \= \pi \deg_{\rm par}(\Xi_h(\cEE|_C)) \, \geq\, \pi\deg_{\rm par}(\cEE)\,,
\end{align*}
where the last inequality is justified as follows. 
By the hypothesis $\cEE \subset \cVV$, the hypothesis $\cVV = \Xi_h(\cVV|_C)$
on the metric~$h$ and the definition of a parabolic subbundle,
the metric~$h$ is acceptable for $\Xi_h(\cEE|_C)$, and hence $\Xi_h(\cEE|_C)$
contains $\cEE$ as parabolic subbundle. The degree is decreasing in passing
to subbundles.
(In fact the last inequality would even be an equality by 
Proposition~\ref{prop:degwedgepower}
if the metric~$h$ restricted from~$\cVV$ to $\cEE$ was acceptable for~$\cEE$.)
\par
Taking into consideration that the hyperbolic area $\vol(C)$ in the
hyperbolic metric of constant negative curvature $-4$ has the form 
$
\vol(C)=\frac{\pi}{2}\left(2g(\ol C)-2+|\Delta|\right)
$
we obtain the desired inequality.
\end{proof}
\par
\begin{Rem} \label{rem:curvature}
{\rm
The  normalization  of the constant negative curvature on the Riemann
surface  $C$  to  $-4$ is a matter of pure convention coming, partly,
from  the  tradition  to associate Teichm\"uller geodesic flow to the
action           of           the          1-parameter          group
$\left(\begin{smallmatrix}e^t&0\\0&e^{-t}\end{smallmatrix}\right)$,  
and     to    have
$\lambda_1=1$  for the top Lyapunov exponent of the Hodge bundle over
the  Teichm\"uller geodesic flow. The choice of the constant negative
curvature $-1$ would impose time normalization which is twice slower,
so  the  1-parameter  subgroup  corresponding  to  geodesic  time for
curvature                 $-1$                would                be
$\left(\begin{smallmatrix}e^{t/2}&0\\0&e^{-t/2}\end{smallmatrix}\right)$.  
In other words,
the  Lyapunov  exponents  for  the geodesic flow in constant negative
curvature  $-k^2$  are  $k$  times  the  Lyapunov  exponents  for the
geodesic  flow  in  constant  negative curvature $-1$. The hyperbolic
area  of  the  Riemann  surface  in  the  metric of constant negative
curvature  $-k^2$  is  $k^{-2}$ times the hyperbolic area of the same
Riemann  surface  in  the metric of constant negative curvature $-1$.
The latter is equal to $2\pi(2g(\ol{C})-2+|\Delta|)$.}
\end{Rem}

\section{Application: Lyapunov exponents for the 
Hodge bundle over the Teichm\"uller geodesic flow} \label{sec:AppLforTeich}

Here we give applications of the main theorem for the Teichm\"uller
geodesic flow. The first is a comparison of slope polygons and
the second is a contribution towards the large genus asymptotics of
individual Lyapunov exponents. Both results were observed in \cite{Fei}, 
and proved there conditionally to our main theorem. We assume in this section
that the reader is familiar with the stratification of the moduli
space of abelian differentials and with the notion of Teichm\"uller curves, 
see e.g.\ \cite{kz03}, \cite{zorich06}, \cite{moelPCMI}.

\subsection{Two polygons}

The {\em slope} of a  
vector bundle $\cFF$ on a curve 
is defined as $\mu(\cFF) = \deg(\cFF)/\rank(\cFF)$. A bundle
is called {\em semistable} if it contains no subbundle of strictly larger slope.
A filtration
$$ 0 = \cFF_0 \subset \cFF_1 \subset \cFF_2 \cdots \subset \cFF_g = \cFF$$
is called a {\em Harder--Narasimhan filtration} if the successive
quotients $\cFF_{i}/\cFF_{i-1}$ are semi-stable and the slopes are strictly
decreasing, i.e.\ 
$$\mu_i := \mu(\cFF_{i}/\cFF_{i-1}) \, > \, \mu_{i+1} := \mu(\cFF_{i+1}/\cFF_i)\,.$$
The Harder--Narasimhan filtration is the unique filtration with these
properties. Given such a filtration, one can record the numerical data
in a ``Harder--Narasimhan polygon'' with vertices $(rk(\cFF_i),
2\deg(\cFF_i)/|\chi|)$, where $|\chi| = 2g-2 +|\Delta|$. 
\par
Here, we apply these considerations to a Teichm\"uller curve  $C$ and to 
$\cFF = f_* \omega_{\ol{X}/\ol{C}}$, the direct image of the relative dualizing
sheaf of the family of stable curves $f: \ol{X} \to \ol{C}$. This agrees
with the Deligne extension of first filtration piece of the weight one
VHS associated with $f$.
\par
Similarly, one can record the numerical data of the Lyapunov exponents
in a ``Lyapunov polygon'' with vertices 
$(k,\sum_{i=1}^k \lambda_i)$. 
\par
The Harder--Narasimhan polygon and the Lyapunov polygon share the 
endpoints $(0,0)$ and $(g,2 \deg f_* \omega_{\ol{X}/\ol{C}}/|\chi|)$.  
Applying the main theorem to the subbundles in the Harder--Narasimhan filtration
gives immediately the following result, originally conjectured by Fei Yu~(\cite{Fei}).
\par
\begin{Cor} The Lyapunov polygon of a Teichm\"uller curves lies always above the  
Harder--Narasimhan polygon (equality permitted).
\end{Cor}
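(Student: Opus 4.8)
The plan is to derive the Corollary as a direct consequence of the main theorem applied to each member of the Harder--Narasimhan filtration. First I would recall the setup: we have the Harder--Narasimhan filtration
$$ 0 = \cFF_0 \subset \cFF_1 \subset \cdots \subset \cFF_g = \cFF = f_* \omega_{\ol{X}/\ol{C}}\,, $$
where each $\cFF_i$ is a holomorphic subbundle of the Deligne extension of the weight one VHS, of rank $\rank(\cFF_i) =: k_i$. Since $\cFF$ is precisely the first Hodge filtration piece (the Deligne extension of a holomorphic subbundle of $\cVV$), each $\cFF_i$ is a holomorphic parabolic subbundle of $\cVV$ to which Theorem~\ref{thm:main_estimate} applies verbatim.

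The key step is then simply to evaluate the main inequality at each vertex abscissa. Applying Theorem~\ref{thm:main_estimate} to $\cEE = \cFF_i$ (of rank $k_i$) gives
$$ \sum_{j=1}^{k_i} \lambda_j \,\geq\, \frac{2\deg_{\rm par}(\cFF_i)}{2g(\ol{C})-2+|\Delta|} \,. $$
The left-hand side is by definition the ordinate of the Lyapunov polygon at abscissa $k_i$, while the right-hand side is exactly the ordinate of the Harder--Narasimhan polygon at its vertex $(k_i, 2\deg(\cFF_i)/|\chi|)$ --- here I would note that for Teichm\"uller curves the monodromies are quasi-unipotent, so after the (Lyapunov-preserving) base change to unipotent monodromies discussed after Theorem~\ref{thm:main_estimate} the parabolic degree $\deg_{\rm par}(\cFF_i)$ coincides with the ordinary degree $\deg(\cFF_i)$ used to define the Harder--Narasimhan polygon. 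Thus the Lyapunov polygon dominates the Harder--Narasimhan polygon at every one of its vertices.

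It remains to promote this vertex-by-vertex domination to domination at all abscissae $k \in [0,g]$. This is the only genuinely geometric point, and I would handle it by convexity: the Harder--Narasimhan polygon is by construction the upper boundary of the convex hull of the points $(k_i, 2\deg(\cFF_i)/|\chi|)$, equivalently it is a concave piecewise-linear function whose slopes $\mu_i$ are strictly decreasing, so between consecutive vertices it is the straight segment joining them. The Lyapunov polygon, on the other hand, is the piecewise-linear interpolation of the partial sums $\sum_{j=1}^k \lambda_j$; since the $\lambda_j$ are nonincreasing, it too is concave. Because both functions are concave, agree at the shared endpoints $(0,0)$ and $(g, 2\deg\cFF/|\chi|)$, and the Lyapunov value dominates the Harder--Narasimhan value at every Harder--Narasimhan vertex, the Lyapunov polygon lies weakly above the Harder--Narasimhan polygon on each linear piece and hence everywhere. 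I expect the main (and only) obstacle to be this interpolation argument: one must check that at an intermediate abscissa it suffices to compare with the endpoints of the relevant Harder--Narasimhan segment, using that the Lyapunov polygon is concave while the Harder--Narasimhan polygon is linear there --- a short convexity estimate rather than any new input from dynamics. With that in place, the statement (equality permitted, exactly when all the relevant inequalities are tight) follows immediately.
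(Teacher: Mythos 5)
Your proposal is correct and is essentially the paper's own argument: the paper derives the Corollary in one line by applying Theorem~\ref{thm:main_estimate} to each subbundle $\cFF_i$ of the Harder--Narasimhan filtration, exactly as you do. The extra details you supply (the identification of parabolic with ordinary degree after the unipotent base change, and the concavity argument promoting vertex-by-vertex domination to domination everywhere) are correct and are left implicit in the paper.
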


\subsection{Lyapunov exponents for strata}

So far, we only have been working over curves. From this we can deduce
properties of Lyapunov exponents for strata thanks to a convergence
result in \cite{bew} for individual Lyapunov exponents.
\par
\begin{Thm}[\cite{bew}] \label{thm:cont_lyap}
If $\sum_{i=1}^k \lambda_i \geq M$ for a dense set of Teichm\"uller curves
in some connected component stratum $\cHH^{*}(\kappa)$ of the moduli space of 
Abelian differentials $\cHH(\kappa)$, 
then the Lyapunov exponents $\lambda_i(\kappa)$ for the Teichm\"uller
geodesic flow on the entire component $\cHH^{*}(\kappa)$ also 
satisfy $\sum_{i=1}^k \lambda_i(\kappa) \geq M$.
\end{Thm}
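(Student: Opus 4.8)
The plan is to view $\sum_{i=1}^k\lambda_i$ as a functional of the ${\rm SL}_2(\RR)$-invariant probability measure carrying the dynamics, to show this functional behaves well under weak-$*$ limits, and then to let a dense family of Teichm\"uller curves equidistribute to the measure on the whole component. First I would reduce to $k=1$: passing to the exterior power cocycle, $\sum_{i=1}^k\lambda_i$ is the single top Lyapunov exponent of the system induced on $\wedge^k\VV$, whose admissible metric and integrability are inherited from those of $\VV$ (Section~\ref{sec:admmetric}, and integrability in the spirit of Theorem~\ref{thm:admisintegrable}). Thus it suffices to treat the top exponent $\lambda_1$ of the lifted cocycle on $\wedge^k\VV$, now regarded as a function $\mu\mapsto\lambda_1(\mu)$ of the invariant measure.

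The structural input is the subadditive description of the top exponent. Writing $\phi_T(x)=\log^+\|G_T\|_x$ for this cocycle, the cocycle relation and invariance make $a_T(\mu)=\int\phi_T\,d\mu$ subadditive in $T$, so by Fekete's lemma
\begin{equation*}
\lambda_1(\mu)\=\lim_{T\to\infty}\frac{1}{T}\,a_T(\mu)\=\inf_{T>0}\frac{1}{T}\,a_T(\mu)\,.
\end{equation*}
Each $\mu\mapsto\tfrac1T a_T(\mu)$ is linear in $\mu$, so $\lambda_1$ is an infimum of such functionals. If, for every fixed $T$, the functional $\tfrac1T a_T$ is weak-$*$ \emph{continuous} along the sequences at hand, then $\lambda_1$ is upper semicontinuous, which is exactly the inequality we need: for a sequence $\mu_n\to\mu_\kappa$ with $\lambda_1(\mu_n)\ge M$ one has, for each fixed $T$, $\tfrac1T a_T(\mu_n)\ge\lambda_1(\mu_n)\ge M$, hence $\tfrac1T a_T(\mu_\kappa)=\lim_n\tfrac1T a_T(\mu_n)\ge M$, and taking the infimum over $T$ gives $\lambda_1(\mu_\kappa)\ge M$.

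To produce the sequence $\mu_n\to\mu_\kappa$ I would invoke the classification of ${\rm SL}_2(\RR)$-invariant measures and the associated isolation and equidistribution theorems for strata (Eskin--Mirzakhani and Eskin--Mirzakhani--Mohammadi): since the given Teichm\"uller curves are dense in the component, they are not confined to any proper affine invariant submanifold, so a subsequence of their invariant probability measures converges weak-$*$, and by non-escape of mass the limit is the affine invariant (Masur--Veech) measure $\mu_\kappa$ on all of $\cHH^*(\kappa)$. The exponents appearing in the hypothesis are precisely the $\lambda_1(\mu_n)$, while those in the conclusion are the $\lambda_i(\kappa)=\lambda_i(\mu_\kappa)$, so the argument above closes the proof.

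The step I expect to be the main obstacle is the weak-$*$ continuity of the linear functionals $a_T$ along the approximating family. The functions $\phi_T$ are continuous but \emph{unbounded} near the boundary of the stratum, so mere weak convergence yields only the unfavorable semicontinuity inequality; upgrading it to genuine convergence requires uniform integrability of $\phi_T$ with respect to the whole family $\{\mu_n\}$. This in turn rests on two quantitative inputs: a logarithmic growth bound for $\phi_T$ near the cusps and near the degeneration locus, entirely analogous to the estimates of Section~\ref{sec:pfintegrab}, together with a uniform non-escape-of-mass statement preventing the positive contribution to the exponent from concentrating on shrinking boundary neighbourhoods as the Teichm\"uller curves degenerate. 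Establishing this uniformity over the family, rather than for a single measure, is the heart of the matter.
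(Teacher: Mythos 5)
The paper offers no proof of this statement to compare against: Theorem~\ref{thm:cont_lyap} is imported verbatim from \cite{bew} and used as a black box, so your attempt can only be judged on its own terms. The architecture you propose is the natural one and its soft steps are sound: the reduction to the top exponent of $\wedge^k\VV$, the identity $\lambda_1(\mu)=\inf_{T}\tfrac1T\int\log^+\|G_T\|\,d\mu$ from subadditivity, the identification of the weak-$*$ limit of the Teichm\"uller-curve measures with the Masur--Veech measure via the isolation theorem of Eskin--Mirzakhani--Mohammadi (density of the union of the curves does force any affine invariant submanifold containing all but finitely many of them to be the whole component), and the inf-of-continuous-functionals argument, which correctly yields the needed upper semicontinuity once $\mu\mapsto\int\log^+\|G_T\|\,d\mu$ is known to converge along the sequence.

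The genuine gap is exactly where you locate it, and it cannot be deferred: the convergence $\int\log^+\|G_T\|\,d\mu_n\to\int\log^+\|G_T\|\,d\mu_\kappa$ for fixed $T$ is not a routine consequence of a logarithmic growth bound plus non-escape of mass. The measures $\mu_n$ live on closed orbits penetrating arbitrarily deep into the thin part of the stratum, and one must exclude that a definite fraction of the exponent is generated there; turning the pointwise bound by the systole into uniform integrability requires a quantitative recurrence estimate that is uniform over the whole family of ${\rm SL}_2(\RR)$-invariant measures, and producing that estimate is essentially the content of the theorem being proved. It is also worth noting that the cited reference does not follow the uniform-integrability route at all: it classifies measures on the projectivized (Grassmannian) bundle invariant under the upper-triangular subgroup and expresses the exponents through such measures, which is what makes the passage to the limit tractable. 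As written, your proposal is therefore a correct reduction of the theorem to its hard core, not a proof of it.
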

\noindent
This theorem applies also to any $\GL_2^+(\RR)$-invariant suborbifold 
that contains a dense set of Teichm\"uller curves.
\par
\smallskip
In \cite{kontsevichzorich} the two authors conjectured the large genus limit of the
Lyapunov is
$$ \lim_{g \to \infty} \lambda_2 \=1 $$
for the hyperelliptic components of the strata $\cHH(2g-2)$ and $\cHH(g-1,g-1)$
and that for all other strata and their components
$$ \lim_{g \to \infty} \lambda_2 \= \frac12\,. $$
\par
This first part of this conjecture now follows. The proof of this 
corollary was given by F.~Yu, assuming the validity of
Theorem~\ref{thm:main_estimate} and Theorem~\ref{thm:cont_lyap}.
\par
\begin{Cor}[{\cite[Conjecture~5.13]{Fei}}] 
\label{cor:hyplim}
For the hyperelliptic components of the series of strata $\cHH(2g-2)$ 
and $\cHH(g-1,g-1)$ the large genus limits of Lyapunov exponents are
$$ \lim_{g \to \infty} \lambda_k \=1 $$
for any fixed $k \geq 1$.
\end{Cor}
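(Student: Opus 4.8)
The plan is to sandwich each fixed $\lambda_k$ between a lower bound tending to $1$ and the value $1$ itself. The upper bound is immediate: for the Hodge bundle over the Teichm\"uller geodesic flow one has $\lambda_1=1$ (Forni), and since the exponents are ordered, $\lambda_k\le\lambda_1=1$ for every $k$; the same holds for $\lambda_k(\kappa)$ on the whole component. Hence $\limsup_{g\to\infty}\lambda_k\le 1$, and the entire problem is to produce a matching lower bound.

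For the lower bound I would apply Theorem~\ref{thm:main_estimate} (equivalently, the corollary that the Lyapunov polygon lies above the Harder--Narasimhan polygon) to a well-chosen rank~$k$ holomorphic subbundle $\cEE_k$ of the Deligne extension $\cVV=f_*\omega_{\ol{X}/\ol{C}}$ of the weight-one VHS. For $\cHH^{\hyp}(2g-2)$ the surfaces are hyperelliptic double covers $\pi\colon X\to\PP^1$ with the single zero of $\omega$ at a Weierstrass point $W$, and I would take $\cEE_k$ to be the subbundle whose fibre consists of the holomorphic differentials vanishing to order $\ge 2(g-k)$ at $W$ (the span of the $k$ differentials of highest vanishing order, the top one being $\omega$ itself). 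For $\cHH^{\hyp}(g-1,g-1)$ I would use the analogous filtration by degree in the hyperelliptic coordinate, i.e.\ $p(x)\tfrac{dx}{y}$ with $\deg p$ bounded. The first point to verify is that these fibrewise flags extend to genuine parabolic subbundles of $\cVV$ over $\ol{C}$; this is where the parabolic structure of Section~\ref{sec:DelExt} enters and the local vanishing data must be matched against the monodromy at the cusps.

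The technical heart is the computation of $\deg_{\rm par}(\cEE_k)$, which is exactly the input supplied (conditionally on Theorem~\ref{thm:main_estimate}) by Fei~Yu. The outcome is that the normalized parabolic degree is \emph{universal}, depending only on the genus and not on the individual Teichm\"uller curve:
\[
M_g(k):=\frac{2\deg_{\rm par}(\cEE_k)}{2g-2+|\Delta|}=\frac{k(2g-k)}{2g-1}
\quad\text{for } \cHH^{\hyp}(2g-2),
\]
and $M_g(k)=k-\tfrac{k^2-k}{2g}$ for $\cHH^{\hyp}(g-1,g-1)$. The case $k=1$ is the known identity $2\deg_{\rm par}(\cL)/|\chi|=\lambda_1=1$ for the maximal Higgs (tautological) line bundle $\cL=\cEE_1$; the higher pieces I would handle by expressing the degrees of the vanishing-order filtration in terms of the self-intersection of the zero section and of $\cL$ on the universal hyperelliptic family. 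In both cases $M_g(k)\to k$ as $g\to\infty$ for $k$ fixed.

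Granting this, the conclusion is quick. By Theorem~\ref{thm:main_estimate} the inequality $\sum_{i=1}^k\lambda_i\ge M_g(k)$ holds on the dense set of hyperelliptic Teichm\"uller curves (e.g.\ those from square-tiled surfaces), so by the continuity Theorem~\ref{thm:cont_lyap} the bound $\sum_{i=1}^k\lambda_i(\kappa)\ge M_g(k)$ holds on the entire hyperelliptic component. Using $\lambda_i\le 1$ for $i<k$,
\[
\lambda_k \;=\; \sum_{i=1}^k\lambda_i-\sum_{i=1}^{k-1}\lambda_i \;\ge\; M_g(k)-(k-1),
\]
and since $M_g(k)-(k-1)\to 1$ while $\lambda_k\le 1$, the squeeze gives $\lim_{g\to\infty}\lambda_k=1$. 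I expect the main obstacle to be the content of the third paragraph: identifying the correct rank~$k$ subbundle, verifying that it extends across the cusps as a parabolic subbundle of the Deligne extension, and showing that its normalized parabolic degree is universal and tends to $k$. Everything after that is a formal consequence of the two cited theorems and the ordering of the spectrum.
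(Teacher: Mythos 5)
Your proposal is correct and follows essentially the same route as the paper: the subbundles you describe (differentials vanishing to order $\ge 2(g-k)$ at the Weierstrass point, resp.\ the analogous filtration in the two-zero case) are exactly the Harder--Narasimhan pieces $\cEE_k = f_*\omega_{\ol{X}/\ol{C}}(-(2g-2k)S)$ resp.\ $f_*\omega_{\ol{X}/\ol{C}}(-(g-k)(S_1+S_2))$ whose degrees the paper imports from Yu--Zuo, and your normalized degrees $M_g(k)$ agree with the paper's formulas. The remaining steps --- Theorem~\ref{thm:main_estimate} on a dense set of Teichm\"uller curves, Theorem~\ref{thm:cont_lyap} to pass to the whole component, and the squeeze $\lambda_k \ge M_g(k)-(k-1)$ against $\lambda_k \le 1$ --- are exactly the paper's (tersely stated) argument.
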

\par
(The Lyapunov exponents in the preceding statement are defined for $g \geq k$.)
\par
\begin{proof}
For those hyperelliptic strata the Harder--Narasimhan filtration over any
Teichm\"uller curve is computed in \cite{yuzuo} to be given by the subbundles
$\cEE_k = f_* \omega_{\ol{X}/\ol{C}}(-(2g-2k)S)$ resp.\ 
$\cEE_k  = f_* \omega_{\ol{X}/\ol{C}}(-(g-k)(S_1+S_2))$
for $k=1,\ldots g$ and the degrees of the successive quotient line bundles 
$\cEE_k/\cEE_{k-1}$ are equal to 
\bes 
\deg(\cEE_k/\cEE_{k-1}) \= \frac{|\chi|}2 \biggl(1 - \frac{2(k-1)}{2g-1}\biggr) \quad \text{resp.}\quad  
\deg(\cEE_k/\cEE_{k-1}) \= \frac{|\chi|}2 \biggl(1 - \frac{(k-1)}{g}\biggr)\,.
\ees
The implies that $2\deg(\cEE_k)/|\chi|$ tends to~$k$ in both cases as $g$ tends
to infinity. Together with the Theorem~\ref{thm:cont_lyap} our main theorem 
implies the result.  
\end{proof}
\par
A similar statement holds for any family of hyperelliptic loci in a sequence of strata
where the order of at least one singularity tends to infinity. 
 
\section{Application: Lyapunov exponents for some 
hypergeometric groups and Calabi--Yau threefolds}  \label{sec:APPCY}

In this section we apply our main theorem to a class of VHS of rank greater than
one. Our example is the well-studied class of hypergeometric local systems
that arise from Calabi--Yau threefolds with $h^{2,1}=1$. The irreducible local systems
that meet the additional requirements imposed by physics (existence of a  
MUM-point and a conifold point, see Section~\ref{sec:CYandGen} for details)
depend on two parameters~$\mu_1, \mu_2$ called local exponents 
(see Section~\ref{sec:localexp} for the definition). For
any pair $0 < \mu_1 \leq \mu_2 \leq 1/2$ with $\mu_i \in \RR$  the corresponding
local system admits an $\RR$-VHS. We compute the degrees of the
Hodge bundles and, consequently, a lower bound for the Lyapunov exponents.
\par
\begin{Thm} \label{thm:degs}
Suppose that the local exponents $0 < \mu_1 \leq \mu_2 \leq 1/2$ at the point $z=\infty$
of a Calabi--Yau-type hypergeometric group with $h^{2,1}=1$
are $(\mu_1,\mu_2,1-\mu_2,1-\mu_1)$. Then the
degrees of the Hodge bundles are
\bes
\deg_{\rm par} \cEE^{3,0} \=  \mu_1 \quad \text{and} \quad \deg_{\rm par} \cEE^{2,1} \=
 \mu_2\,.
\ees
\end{Thm}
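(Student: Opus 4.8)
The plan is to compute each of the four rank-one Hodge line bundles $\cEE^{3,0},\cEE^{2,1},\cEE^{1,2},\cEE^{0,3}$ of the weight-three $\RR$-VHS separately, viewing them as sub-quotient line bundles of the Deligne extension $\cVV$ over $\ol{C}=\PP^1$ with $\Delta=\{0,1,\infty\}$. Since the polarization $Q$ is a flat pairing of parabolic degree zero identifying $\cEE^{p,q}$ with the parabolic dual of $\cEE^{q,p}$, one has $\deg_{\rm par}\cEE^{0,3}=-\deg_{\rm par}\cEE^{3,0}$ and $\deg_{\rm par}\cEE^{1,2}=-\deg_{\rm par}\cEE^{2,1}$; as $\det\VV$ has trivial monodromy (the exponents at $\infty$ sum to $2\in\ZZ$ and those at $0,1$ are integral), $\sum_p\deg_{\rm par}\cEE^{p,3-p}=\deg_{\rm par}\cVV=0$ automatically. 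Hence it suffices to treat $\cEE^{3,0}$ and $\cEE^{2,1}$, and for each I would split its parabolic degree as $\deg_{\rm par}\cEE^{p,q}=\deg\cEE^{p,q}+\sum_{c\in\Delta}w^{p,q}_c$ into the ordinary degree on $\PP^1$ plus the parabolic weights at the three punctures.

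First I would compute the parabolic weights. At the MUM point $z=0$ and the conifold point $z=1$ the local monodromy is unipotent, so every local exponent is an integer and all the weights $w^{p,q}_0,w^{p,q}_1$ vanish. At $z=\infty$ the monodromy is regular semisimple with eigenvalues $e^{2\pi i\mu_1},e^{2\pi i\mu_2},e^{2\pi i(1-\mu_2)},e^{2\pi i(1-\mu_1)}$, so the Deligne extension is eigenspace-adapted and each Hodge line acquires as its weight one of $\mu_1,\mu_2,1-\mu_2,1-\mu_1$. The assignment is forced by the polarization, which couples the eigenvalue $e^{2\pi i\beta}$ with $e^{-2\pi i\beta}$ and the Hodge type $(p,q)$ with $(q,p)$, pairing $\{(3,0),(0,3)\}$ with $\{\mu_1,1-\mu_1\}$ and $\{(2,1),(1,2)\}$ with $\{\mu_2,1-\mu_2\}$; within each pair Schmid's nilpotent-orbit asymptotics and positivity of the Hodge metric send the more holomorphic piece to the smaller exponent, giving $w^{3,0}_\infty=\mu_1$ and $w^{2,1}_\infty=\mu_2$.

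It then remains to show that the ordinary degrees satisfy $\deg\cEE^{3,0}=\deg\cEE^{2,1}=0$ (whence $\deg\cEE^{1,2}=\deg\cEE^{0,3}=-1$ by duality, consistent with $\sum_p\deg\cEE^{p,3-p}=\deg\cVV=-\sum_c\tr\Res_c\nabla=-2$, the residues being taken in $[0,1)$). I would obtain this from the Higgs field $\theta_p\colon\cEE^{p,q}\to\cEE^{p-1,q+1}\otimes\Omega^1_{\ol{C}}(\Delta)$ induced by the Gauss--Manin connection. Because the hypergeometric connection has no apparent singularities, each $\theta_p$ is an isomorphism on $C$ and can vanish only along $\{0,1,\infty\}$, so $\deg\cEE^{p-1,q+1}=\deg\cEE^{p,q}+1-\sum_c n_c(\theta_p)$, using $\deg\Omega^1_{\ol{C}}(\Delta)=1$. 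The local orders $n_c(\theta_p)$ are read off from the limiting mixed Hodge structure: at the MUM point the monodromy logarithm is a single Jordan block, so $N$ is an isomorphism on each $\gr$-piece and no $\theta_p$ vanishes there; at the conifold $N$ is a rank-one transvection acting only on the middle of the Hodge filtration; and at $\infty$ the vanishing is controlled by the $\infty$-exponent data. Combining these orders with the duality relations $\deg\cEE^{3,0}+\deg\cEE^{0,3}=-1$ and $\deg\cEE^{2,1}+\deg\cEE^{1,2}=-1$ then pins the integers down to $0$, and hence $\deg_{\rm par}\cEE^{3,0}=\mu_1$ and $\deg_{\rm par}\cEE^{2,1}=\mu_2$.

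The hard part will be the third step: extracting the integer degrees, i.e.\ computing the local vanishing orders $n_c(\theta_p)$ of the Higgs field at the conifold and at $\infty$ purely from the local exponents, and simultaneously justifying the polarization-forced ordering of the $\infty$-exponents used in the second step. This rests on Schmid's nilpotent orbit theorem together with an explicit description of the two degenerate limits (maximally unipotent at $0$, rank-one transvection at $1$). Once the Deligne extension and the Hodge filtration of the hypergeometric system are made explicit in terms of the local exponents of Section~\ref{sec:localexp}, I expect these orders, and therefore the degrees, to drop out of a direct local computation, making the final formulas $\deg_{\rm par}\cEE^{3,0}=\mu_1$, $\deg_{\rm par}\cEE^{2,1}=\mu_2$ immediate.
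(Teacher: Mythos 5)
Your overall strategy coincides with the paper's: split $\deg_{\rm par}$ into ordinary degree plus parabolic weights, get the weights at $\infty$ from the eigenvalue data, and pin down the integer degrees by tracking the degeneration of the Kodaira--Spencer maps $\cEE^{p,q}\to\cEE^{p-1,q+1}\otimes\Omega^1_{\ol{C}}(\Delta)$ combined with the duality $\deg_{\rm par}\cEE^{p,q}=-\deg_{\rm par}\cEE^{q,p}$. The bookkeeping you set up (weights $0$ at the unipotent points, $\sum_p\deg\cEE^{p,3-p}=-2$, the two duality relations) is correct and matches the paper.

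However, there is a genuine gap at exactly the point you yourself flag as ``the hard part'': the local vanishing orders of the Higgs field at the conifold and at $\infty$ are never computed, and they are the entire content of the theorem. The paper does this in Lemma~\ref{HiggsReg} by an explicit computation with the Frobenius basis of solutions: the cokernel lengths of the successive Kodaira--Spencer maps are read off from the vanishing orders of the leading minors of the Wronskian-type matrix $M_{ij}=\bigl(t\,\partial/\partial t\bigr)^{j-1}s_i$, giving $\lfloor\mu_2\rfloor-\lfloor\mu_1\rfloor$, $\lfloor\mu_3\rfloor-\lfloor\mu_2\rfloor$, etc.\ at a boundary point. Applied to the conifold exponents $(0,1,1,2)$ this shows that the two \emph{outer} maps each acquire a cokernel of length one while the \emph{middle} map remains an isomorphism there (and all three are isomorphisms at $t=0$ and $t=\infty$). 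Your heuristic --- that the rank-one transvection ``acts only on the middle of the Hodge filtration'', so the degeneration sits in the middle --- points in the wrong direction; had you followed it you would assign the cokernel to the middle map, which is inconsistent with $\deg\cEE^{2,1}+\deg\cEE^{1,2}=-1$ (it would force $2\deg\cEE^{2,1}=-1$). A smaller issue: your assignment of the weight $\mu_1$ to $\cEE^{3,0}$ at $\infty$ is asserted via ``positivity of the Hodge metric sends the more holomorphic piece to the smaller exponent'', which is not justified (and the monodromy at $\infty$ need not be regular semisimple when $\mu_1=\mu_2$ or $\mu_2=1/2$); the paper instead derives it purely algebraically from the requirement that the Higgs field preserve the parabolic filtration while shifting the Hodge grading by $-1$, which forces $V_\infty^{\geq\mu_4}=\cEE^{0,3}_\infty$ and hence the full alignment. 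To complete your proof you must supply the local-exponent computation of the cokernel lengths (or an equivalent limiting-MHS argument) rather than expect it to ``drop out''.
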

\par
For families of Calabi--Yau threefolds the local mondromies are quasi-unipotent, hence
$\mu_i \in \QQ$.
In Table~\ref{cap:mirror}  we reproduce from~\cite{DoranMorgan} the well-known list 
of possible parameters $(\mu_1,\mu_2)$ that meet the physically relevant
conditions together with approximations for the Lyapunov exponents. Explanations
for the first three columns are given in Section~\ref{sec:CYandGen}. 
\par
The most remarkable conclusion from the numerical approximation of Lyapunov
exponents is the following. In the first seven cases the sum of Lyapunov
exponents matches the lower bound predicted by Theorem~\ref{thm:main_estimate}. 
The table lists the corresponding sum as exact fractions, but note that
only three digits seem to be reliable in the experiments. In the remaining cases, 
the sum $\lambda_1 + \lambda_2$ 
of Lyapunov exponents is strictly larger than predicted by the lower
bound in Theorem~\ref{thm:main_estimate}. Note that in precisely the $7$~cases
of (numerical) equality the monodromy groups of the hypergeometric local
systems are of infinite index (``thin'') in $\Sp(4,\ZZ)$ while in the
other  $7$~cases the monodromy group is of finite index in $\Sp(4,\ZZ)$.
This follows from combining the results in~\cite{BravThomas} and \cite{SinghVenk}.
It would be interesting to decide if in these seven cases actually 
equality holds and to explain the relation to the arithmeticity of
the monodromy groups. We provide  further conjectures in this direction in
Section~\ref{sec:conj} below.
\par
\begin{table}[h]
$$\begin{array}{|l|l|l||l|l|l|l|l|}
\hline
\# & \text{Model} & C & d & \mu_1, \mu_2 & \lambda_1 & \lambda_1 + \lambda_2 & -\chi  \\
\hline
1 &  & 46 & 1 & 1/12, 5/12 & 0.97 & 1 & 11/12  \\
\hline
2 &  & 44 & 2 & 1/8, 3/8 & 0.95 & 1 & 7/8 \\
\hline
3 &  & 52 & 4 & 1/6, 1/2 & 1.27 & 4/3 & 1 \\
\hline
4 & \PP^4[5] & 50 & 5 & 1/5, 2/5 & 1.12 & 6/5 & 4/5  \\
\hline
5 &  & 56 & 8 & 1/4, 1/2 & 1.40 & 3/2 & 1  \\
\hline
6 & \PP^6[2^2,3] & 60 & 12 & 1/3, 1/2 & 1.53 & 5/3 & 1 \\
\hline
7 & \PP^7[2^4] & 64 & 16 & 1/2, 1/2 & 1.75 & 2 & 1  \\
\hline
\hline
8 &  & 22 & 1 & 1/6, 1/6 & 0.75 & 0.92 & 1  \\
\hline
9 &  & 34 & 1 & 1/10, 3/10 & 0.77 & 0.83 & 9/10 \\
\hline
10 &  & 32 & 2 & 1/6, 1/4 & 0.84 & 0.97 & 11/12 \\
\hline
11 &  & 42 & 3 & 1/6, 1/3 & 0.96 & 1.06 & 5/6 \\
\hline
12 &  & 40 & 4 & 1/4, 1/4 & 1.07 & 1.30 & 1 \\
\hline
13 &  & 48 & 6 & 1/4, 1/3 & 1.15 & 1.31 & 11/12\\
\hline
14 &  & 54 & 9 & 1/3, 1/3 & 1.34 & 1.60 & 1 \\
\hline
\end{array} $$
\caption{Table of CY-VHS and approximate values of their Lyapunov exponents} 
\label{cap:mirror}
\end{table}
\par
Note that there is another commonly used normalization of the degrees 
and Lyapunov
exponents. Instead of working with parabolic degrees and over $\PP^1$ with
three singular points, we can view the above local systems as representations
of the Fuchsian triangle group~$\Delta(n,\infty,\infty)$, where~$n$ is the
least common multiple of the denominators of $\mu_1$ and $\mu_2$ if
$0<\mu_1<\mu_2<1/2$ and where $n=\infty$ if at least one of the inequalities
becomes an equality. Geometrically, 
this corresponds to viewing the local systems over the orbifold 
$C = \HH/\Delta(n,\infty,\infty)$. The orbifold Euler characteristic 
$-\chi$ of~$C$ is given in each case in the last column. Note that
$0<-\chi\leq 1$ in all the cases. One can also define and compute
Lyapunov exponents $\lambda_1^{\rm orb}, \lambda_2^{\rm orb}$ of the 
corresponding local systems over the orbifold 
$C = \HH/\Delta(n,\infty,\infty)$. They are related to the
Lyapunov exponents $\lambda_i$ over the thrice punctured sphere by
$$ \lambda_i \= \lambda_i^{\rm orb} \,\cdot \, |\chi|.$$
The corresponding orbifold degrees of Hodge bundles can be computed
as ordinary degrees of line bundles on a cyclic cover where all
the monodromies are unipotent, as indicated in Section~\ref{sec:CYandGen}.
The orbilfold normalization $\lambda_1^{\rm orb}, \lambda_2^{\rm orb}$ was
used in previous computations for Teichm\"uller curves (e.g.\ in 
 \cite{bouwmoel} and \cite{cyclicekz}).
\par
\subsection{Hypergeometric differential equations} \label{sec:HGDE}

We fix two sequences of real numbers $\bfal = (\alpha_1,\ldots, \alpha_n)$
and $\bfbe = (\beta_1,\ldots, \beta_n)$ with
\ba
0 &\leq \alpha_1 \leq \cdots \leq \alpha_n < 1 \\
0 &\leq \beta_1 \leq \cdots \leq \beta_n < 1\\
\ea
and with the property $\alpha_i \neq 1-\beta_j$. The regular hypergeometric 
differential operator is the operator
\be
 P \= P(\bfal,\bfbe) \= \prod_{i=1}^n (D-\alpha_i) - t 
\prod_{i=1}^n (D-\beta_i),
\qquad D = t\,\frac{d}{d t}  
\ee
It gives rise to a flat connection $\nabla$ on the trivial vector bundle $\cVV_0$ on $\PP^1$
with regular singularities precisely at the points $\{0,1,\infty\}$. We 
refer to this local system as the hypergeometric local system $\VV = \VV(\bfal,\bfbe)$.
\par
A {\em hypergeometric group} with parameters $\bfa = (a_1,\ldots,a_n)$ and 
$\bfb = (b_1,\ldots,b_n)$ 
subject to the conditions $|a_i| = 1 = |b_j|$ and $a_i \neq 1/b_j$ for all $(i,j)$ 
is a subgroup of $\GL_n(\CC)$ generated by three elements
\be h_0,\,h_1,\, h_\infty \in \GL_n(\CC) \quad \text{with} \quad h_\infty h_1 h_0 \= {\rm Id} \ee
such that 
\be \det(X\,{\rm Id} -h_\infty) \= \prod_{i=1}^n (X-a_i), \quad 
\det(X\,{\rm Id} -h_0^{-1})  \= \prod_{i=1}^n (X-b_i)
\ee
and such that $h_1$ is a pseudo-reflection. Here, a {\em pseudo-reflection} 
is an element $g \in \GL_n(\CC)$ such that $g-{\rm Id}$ has rank one.
\par
Up to conjugation there is a unique hypergeometric group for a given set 
of parameters. The proof due to Levelt and monodromy matrices can be
found e.g.\ in \cite{BeHe}, Theorem~3.5.
The hypothesis $a_i \neq 1/b_j$ guarantees that the flat bundle 
$\VV$ is irreducible (\cite{BeHe}, Proposition~3.3). 
\par
The monodromy group of $\VV$ is the hypergeometric group with parameters $\bfa$ 
and~$\bfb$ where $e^{2\pi i \alpha_j} = a_j$
and $e^{2\pi i \beta_j} = b_j$ for $j=1,\ldots,n$.
\par

\subsection{Simpson's correspondence in the parabolic case} \label{sec:Simp}

In order to state Simpson's correspondence we need to extend the definition
of parabolic structure and stability from vector bundles to the cases of
parabolic vector bundles, local systems and Higgs bundles, respectively.
\par
A {\em regular parabolic Higgs bundle} is a parabolic vector bundle $(\cEE,F^\bullet)$
together with a Higgs field, i.e.\ a map of sheaves of $\cOO_C$-modules
\be \theta: \cEE  \to \cEE \otimes \Omega^1_{C} \ee
that respects the parabolic structure in the sense that for every $c \in \Delta$ 
the map $\theta$ extends for every $\alpha \in [0,1)$ to
\be \label{eq:compHiggsFilt}
\theta_{c,\alpha}: \cEE_c^{\geq \alpha} \to 
\cEE_c^{\geq \alpha} \otimes \Omega^1_{\ol{C}}(\Delta)\,.
\ee
\par
A {\em regular parabolic system of Hodge bundles} is a regular parabolic Higgs 
bundle whose underlying vector bundle admits a decomposition 
$\cEE = \oplus_{p \in \ZZ} \cEE^p$, such that $\theta$ has degree $-1$ with respect 
to the grading given by this decomposition.
\par
\smallskip
Recall that a vector bundle $\cVV$ is called stable, if for every 
subbundle $\cMM \subset \cVV$ the condition
\be \label{eq:defstab}
\frac{\deg(\cMM)}{\rank(\cMM)} \,< \,\frac{\deg(\cVV)}{\rank(\cVV)}
\ee
holds. Similarly, a {\em parabolic vector bundle} (resp.\ {\em a local system}, resp.\ 
a {\em Higgs bundle}) is called {\em stable}, if the condition~\eqref{eq:defstab} holds
for every parabolic subbundle (resp.\ every subbundle preserved by the connection, 
resp.\ every subbundle preserved by the Higgs field).
\par
\medskip
Simpson's correspondence (\cite{SiNoncomp}) for the non-compact case states that there 
is a natural one-to-one correspondence between stable regular parabolic Higgs bundles 
and  stable parabolic local systems of degree zero.
\par
There is an action of $\CC^*$ on the set of regular parabolic Higgs bundles of 
degree zero, where $s \in \CC^*$ sends $(E,\theta)$ to $(E,s\theta)$ while 
preserving the filtration. Fixed points of this action are precisely the regular 
parabolic systems of Hodge bundles of degree zero.
\par
\medskip
Since hypergeometric local systems are rigid (e.g.\ \cite{BeHe},
Proposition~3.5), Simpson's correspondence implies the following
(\cite{SiNoncomp}, Corollary~8.1).
\par
\begin{Cor}
A hypergeometric local system $\VV = \VV(\bfal,\bfbe)$ 
carries a complex variation of Hodge structures.
\end{Cor}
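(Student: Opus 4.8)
The statement is a direct consequence of combining the rigidity of hypergeometric local systems with Simpson's correspondence and the fixed-point description of complex variations of Hodge structures recalled above, and is precisely \cite{SiNoncomp}, Corollary~8.1. The plan is to organize the argument as follows. First I would record that $\VV = \VV(\bfal,\bfbe)$ is irreducible, which is guaranteed by the hypothesis $a_i \neq 1/b_j$ (\cite{BeHe}, Proposition~3.3), and hence \emph{semisimple}. A semisimple local system with non-expanding (here even quasi-unipotent) cusp monodromies admits a tame harmonic metric, and the parabolic first Chern class of the associated filtered object vanishes; in particular its parabolic degree is zero. Thus $\VV$ is a stable parabolic local system of degree zero, and Simpson's correspondence attaches to it a stable regular parabolic Higgs bundle $(\cEE,\theta)$ of degree zero.

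Next I would exploit the $\CC^*$-action $s \cdot (\cEE,\theta) = (\cEE, s\theta)$ recalled in Section~\ref{sec:Simp}. Since this action preserves stability and keeps the parabolic degree equal to zero, for each $s \in \CC^*$ the Higgs bundle $(\cEE, s\theta)$ corresponds under Simpson's equivalence to a stable parabolic local system $\VV_s$ of degree zero, with $\VV_1 = \VV$. This produces a continuous one-parameter family of local systems deforming $\VV$.

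The decisive input is now rigidity: hypergeometric local systems are cohomologically rigid (\cite{BeHe}, Proposition~3.5), so $\VV$ represents an isolated point in the moduli of local systems on $\PP^1 \setminus \{0,1,\infty\}$ with the prescribed local monodromy data. Since $\CC^*$ is connected and the orbit $s \mapsto \VV_s$ is continuous and passes through the isolated point $\VV_1 = \VV$, the orbit must be constant, i.e.\ $\VV_s \cong \VV$ for all $s$. Transporting this back through the correspondence gives $(\cEE, s\theta) \cong (\cEE,\theta)$ for every $s$, so $(\cEE,\theta)$ is a fixed point of the $\CC^*$-action. By the fixed-point description recalled above, $(\cEE,\theta)$ is then a regular parabolic system of Hodge bundles of degree zero, and the local system underlying it, namely $\VV$, therefore carries a complex variation of Hodge structures.

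The step I expect to require the most care is the passage from cohomological rigidity to the statement that the $\CC^*$-orbit degenerates to a single point. This uses that the non-abelian Hodge correspondence transports the $\CC^*$-action on Higgs bundles to a \emph{continuous} (though not holomorphic) action on the Betti side, together with the elementary fact that a continuous orbit of a connected group through an isolated point of a Hausdorff space is constant; one must also check that the tame harmonic construction behaves well enough in families for $s \mapsto \VV_s$ to be genuinely continuous. All of these points are exactly what \cite{SiNoncomp} supplies in the noncompact, parabolic setting, and they apply here because $\VV$ is irreducible with non-expanding cusp monodromies, so the tame harmonic theory is available.
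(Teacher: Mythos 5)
Your argument is correct and is essentially the paper's proof: the paper simply cites rigidity (\cite{BeHe}, Proposition~3.5) together with Simpson's correspondence and points to \cite{SiNoncomp}, Corollary~8.1, and your write-up just unpacks the standard irreducibility--stability--$\CC^*$-fixed-point chain behind that citation. The one place deserving a word of care, which you gesture at, is that rigidity is rigidity with prescribed local monodromy data, so one should note that scaling $\theta$ preserves the local data (here the residues are nilpotent since all $|a_i|=|b_j|=1$), but this is exactly what Simpson's noncompact setup supplies.
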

\par
The Hodge numbers, i.e.\ the ranks $h^p$ of the summands $\cEE^{p}$, are known
by a theorem of Fedorov. If we set $\rho(k) = \#\{j: \alpha_j < \beta_k\} -k$, 
then the main theorem of \cite{Fed} (Theorem~1) states that
\be 
h^p \= \# \rho^{-1}(p)
\ee
after an appropriate shifting of the weight (or the grading).
\par

\subsection{Families of Calabi--Yau threefolds with $h^{2,1}=1$ and 
generalizations.} \label{sec:CYandGen}

Families of Calabi--Yau threefolds with $h^{2,1}=1$ carry a weight~$3$
variation of Hodge structures and by definition of Calabi--Yau threefolds
the Hodge numbers of these families are $(1,1,1,1)$, i.e.\ $\dim \cEE^{p,q} = 1$
for $p=0,1,2,3$. In a VHS arising from geometry the VHS has an $\RR$-structure
and quasi-unipotent monodromies. Motivated by physics requirements
the most intensely investigated families satisfy the following properties.
They are over $\PP^1$, smooth outside three points, have one point of 
{\em maximal unipotent monodromy} (MUM, i.e.\ there
is only one Jordan block of maximal size) and one rank one unipotent point.
There are 14 possible cases, as derived in \cite{DoranMorgan}. 
They are given in Table~\ref{cap:mirror}. In some cases, these families have
been realized geometrically (e.g.\ as complete intersection in weighted
projective spaces) and the first column of the table lists this
realization (if available, e.g.\ $\PP^4[5]$ refers to the (mirror) quintic)). 
\par
The local exponents of such a hypergeometric system, with real structure, 
with a MUM-point and with a point where the monodromy is unipotent of rank one, are
\ba
\bfbe = (0,0,0,0)  & &\quad \text{at} \quad& t=0 \\
(0,1,1,2) &&\quad \text{at} \quad& t=1 \\
\bfal = (\mu_1, \mu_2,1-\mu_2, 1-\mu_1) && \quad\text{at} \quad& t=\infty\,, \\
\ea
see e.g.\ \cite{YoFuchs}, \cite{YoHyp},
\cite{BeHe}, \cite{Fed} for general background.
\par
A realization of monodromy groups of the hypergeometric local
systems listed in Table~\ref{cap:mirror} is given by
$$ T_0 \= \left(\begin{smallmatrix}  
1 & 0 & 0 &0 \\
1 &  1 & 0  & 0 \\
1/2 & 1 & 1 & 0 \\
1/6 & 1/2 & 1 & 1 \\
\end{smallmatrix}\right), \quad 
T_1 \= \left(\begin{smallmatrix}  
1 & -C/12 & 0 &-d \\
0 &  1 & 0  & 0 \\
0 & 0 & 1 & 0 \\
0 & 0 & 0 & 1 \\
\end{smallmatrix}\right) $$
with the parameters $(C,d)$ as in the table. Here, the symplectic form 
defining the polarization of the Hodge structure on~$\VV(\bfal,\bfbe)$
is given by $$ \Omega \= \left(\begin{smallmatrix}  
0 & C/12 & 0 &d \\
-C/12 &  0 & -d  & 0 \\
0 & d & 0 & 0 \\
-d & 0 & 0 & 0 \\
\end{smallmatrix}\right)$$
and this symplectic form can be conjugated into $\Sp(4,\ZZ)$. 
The proof of Theorem~\ref{thm:degs} does not use properties of these 
realizations. In fact, the representation is real by \cite{Fed}, Theorem~2 if  
\bes \alpha_m + \alpha_{4+1-m} \in \ZZ \quad \text{and} \quad 
\beta_m + \beta_{4+1-m} \in \ZZ\,. \ees
\par
\medskip
The basic principle for the proof of Theorem~\ref{thm:degs} is the following. We
consider the Kodaira--Spencer maps  (graded pieces of the Higgs fields)
\be \label{eq:taup}
\tau_{p-1}: 
\cEE^{p,q} \to \cEE^{p-1,q+1} \otimes \Omega^1_{\ol{C}}(\Delta)\,. \ee
In our situation, these are maps between line bundles. The maps
$\tau_0$, $\tau_1$ and $\tau_2$ are non-zero by
Lemma~\ref{HiggsReg} below, hence inclusions. To compute the (parabolic) 
degrees it suffices to compute the length of the cokernels of these maps
and to determine the parabolic structure. We prove the following lemma
(which applies not only to hypergeometric systems, but to any self-dual
flat bundle) and explain the notions about differential equations 
in the next subsection.
\par
\begin{Lemma} \label{HiggsReg}
If $x \in C$ is a regular point of the local system $\VV$ on $\ol{C}$, 
then all the Kodaira--Spencer maps $\tau_i$ are isomorphisms at $x$.
\par
More generally, if $x \in C$ and the local exponents
$\mu_1 < \mu_2 < \mu_3 < \mu_4$ are distinct and integral, 
then $\tau_0$ has a cokernel of length $\mu_2  - \mu_1 -1$, 
and so does $\tau_2$ by duality.
The map $\tau_1$ has a cokernel of length $\mu_3 - \mu_2 -1$.
\par
If $c \in \Delta \subset \ol{C}$ and with local exponents
$\mu_1 \leq \mu_2 \leq \mu_3  \leq \mu_4$, 
then $\tau_0$ has a cokernel of length 
$\lfloor\mu_2\rfloor-\lfloor\mu_1\rfloor$, and so does $\tau_2$ by duality.
The map $\tau_1$ has a cokernel of length $\lfloor\mu_3\rfloor-\lfloor\mu_2\rfloor$.
\end{Lemma}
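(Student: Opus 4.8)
The plan is to reduce the statement to a local computation of the order of vanishing of each Kodaira--Spencer map, viewed as a morphism of line bundles. Since the Hodge numbers are $(1,1,1,1)$, each $\cEE^{p,q}$ is a line bundle and each $\tau_{p-1}\colon \cEE^{p,q}\to\cEE^{p-1,q+1}\otimes\Omega^1_{\ol C}(\Delta)$ is a map of line bundles, so its cokernel is a skyscraper sheaf whose length is $\sum_x\operatorname{ord}_x\tau_{p-1}$. Because $\VV$ is irreducible, no $\tau_{p-1}$ can vanish identically --- otherwise the connection would preserve $\cFF^p$ and split off a flat subbundle --- so the bottom piece $\cEE^{3,0}$ generates $\cVV_C$ cyclically under $\nabla$: a local generator $e$ together with $\nabla e,\nabla^2 e,\nabla^3 e$ spans the successive filtration steps wherever the $\tau_{p-1}$ are nonzero. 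This realises $(\cVV_C,\nabla)$ locally as the module of a scalar order-$4$ operator whose indicial roots at a point $x$ are precisely the local exponents $\mu_1,\dots,\mu_4$, and the whole lemma is then read off from the indicial equation together with the Deligne-extension normalisation.

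First I would treat a regular point $x\in C$. There the connection is holomorphic and the local exponents are $0,1,2,3$, so $e,\nabla e,\nabla^2 e,\nabla^3 e$ form a local frame and each $\tau_{p-1}$ is an isomorphism, i.e.\ $\operatorname{ord}_x\tau_{p-1}=0$. This is exactly the case $\mu_i=i-1$ of the interior formula ($\mu_2-\mu_1-1=0$ and $\mu_3-\mu_2-1=0$), so the first assertion is the degenerate instance of the second and needs no separate argument.

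Next I would carry out the local computation at an interior point with distinct integral exponents and at a cusp, the two differing only in how $\Omega^1_{\ol C}(\Delta)$ is trivialised. Passing to the Deligne extension $\cVV$, I order the indicial roots compatibly with the Hodge filtration, so that the holomorphic generator of each graded piece $\cEE^{p,q}$ has leading exponent the corresponding root $\mu$; identifying which root governs which Hodge piece is Schmid's description of the limiting Hodge filtration via the nilpotent orbit. The map $\tau_{p-1}$ sends the generator of $\cEE^{p,q}$ to $\nabla$ of it, projected to the next graded piece, whose leading exponent is the next root $\mu_b$ starting from $\mu_a$. At an interior point $x\notin\Delta$ the sheaf $\Omega^1_{\ol C}(\Delta)$ is locally $\Omega^1_{\ol C}$, trivialised by $dt$, so $\tau_{p-1}$ acts as $\tfrac{d}{dt}$ and lowers the leading order by one; comparing the two holomorphic frames then yields $\operatorname{ord}_x\tau_{p-1}=\mu_b-\mu_a-1$, that is $\mu_2-\mu_1-1$ for $\tau_2$ and $\mu_3-\mu_2-1$ for $\tau_1$. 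At a cusp $x\in\Delta$, by contrast, $\Omega^1_{\ol C}(\Delta)$ is trivialised by $\tfrac{dt}{t}$, so $\tau_{p-1}$ acts as the logarithmic derivative $t\tfrac{d}{dt}$, which preserves the leading order; the vanishing is then produced solely by the $[0,1)$-normalisation of the Deligne extension, which records only the integer parts of the exponents, giving $\operatorname{ord}_x\tau_{p-1}=\lfloor\mu_b\rfloor-\lfloor\mu_a\rfloor$.

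Finally, the assertions for $\tau_0$ follow from those for $\tau_2$ by the self-duality hypothesis. The symplectic (or orthogonal) pairing furnishes an isomorphism $\VV\cong\VV^\vee$ exchanging $\cEE^{p,q}$ with $(\cEE^{3-p,3-q})^\vee$ and making the Higgs field self-adjoint, so that $\tau_0$ is the dual morphism of $\tau_2$ and $\Coker\tau_0$, $\Coker\tau_2$ have equal length; equivalently the exponents pair up as $\mu_a+\mu_{5-a}\in\ZZ$, whence $\mu_4-\mu_3=\mu_2-\mu_1$ and $\lfloor\mu_4\rfloor-\lfloor\mu_3\rfloor=\lfloor\mu_2\rfloor-\lfloor\mu_1\rfloor$. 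I expect the main obstacle to be the singular-point step above: pinning down the exponent-to-Hodge-piece dictionary and correctly matching the two bookkeeping conventions --- the $\tfrac{dt}{t}$-trivialisation of $\Omega^1_{\ol C}(\Delta)$ and the $[0,1)$-normalisation of the Deligne/parabolic extension --- to the indicial roots. This is precisely where Schmid's growth estimates are needed, and where the structural asymmetry between the interior formula (carrying the $-1$ from $\tfrac{d}{dt}$) and the cusp formula (carrying floors but no $-1$, from $t\tfrac{d}{dt}$) is produced.
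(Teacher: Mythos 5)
Your overall strategy coincides with the paper's: reduce to a local order-of-vanishing computation for morphisms of line bundles, present $(\cVV_C,\nabla)$ locally via a cyclic vector as the module of a scalar order-$4$ operator, read everything off the indicial (local) exponents and the Frobenius basis of solutions, distinguish interior points from cusps by $d/dt$ versus $t\,d/dt$ (whence the $-1$ in the interior formula versus the floors coming from the $[0,1)$-normalization of the Deligne extension), and obtain one of the two outer maps from the other by duality. Treating the regular-point case as the degenerate instance $\mu_i=i-1$ is also exactly what the paper does.

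The gap is in the step you yourself flag as the main obstacle: the ``exponent-to-Hodge-piece dictionary.'' You take as input that the generator of each graded piece $\cEE^{p,q}$ has leading exponent the corresponding root, citing Schmid's nilpotent-orbit description of the limiting Hodge filtration. That tool is unavailable at an interior point $x\in C$ (there is no monodromy and no limiting mixed Hodge structure there), and even at a cusp it does not by itself supply the dictionary for a general self-dual flat bundle; establishing it is essentially equivalent to the lemma. Moreover the bookkeeping ``apply $d/dt$, drop the leading order by one, compare the two frames'' does not visibly yield $\mu_b-\mu_a-1$ (naively it gives $\mu_b-\mu_a+1$ or a negative number, depending on the direction of comparison): the point is that $\nabla$ of the generator of $\cEE^{p,q}$ has its low-order components annihilated by the projection to the next graded quotient, and one must control the leading coefficient of what survives. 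The device the paper uses, and which your proposal lacks, is to compute instead the cokernels of the composites of the first one, two and three Kodaira--Spencer maps out of $\cEE^{3,0}$ as the vanishing orders of $m\wedge\nabla m$, $m\wedge\nabla m\wedge\nabla^2 m$ and $m\wedge\cdots\wedge\nabla^3 m$, i.e.\ of the successive Wronskian minors of the Frobenius basis; these have leading term $t^{\mu_1+\cdots+\mu_k-\binom{k}{2}}$ with a generalized Vandermonde leading coefficient, nonzero precisely because the exponents are distinct (with the logarithmic Wronskian playing this role at a cusp), and the individual cokernel lengths then follow by subtraction, with no a priori matching of exponents to Hodge pieces. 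Replacing the appeal to Schmid by this computation would close the gap.
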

\par
\begin{proof}[Proof of Theorem~\ref{thm:degs}] 
We can apply the first observation in Lemma~\ref{HiggsReg} to every point 
different from $0,1,\infty$ and we can apply  the observation for boundary 
points in this Lemma at the MUM-point $t=0$ and to the
point $t = \infty$ to conclude that at all these points all the $\tau_i$
are isomorphisms. Finally, the last statement in Lemma~\ref{HiggsReg} tells 
us that at the unipotent rank one point at $t=1$, the maps $\tau_1$ is still an
isomorphism, while $\tau_0$ and $\tau_2$ have cokernels of length one.
\par
Next, we consider the $[0,1)$-filtrations, which are non-trivial only at
the point $t=\infty$. There, since all the $\cEE^{p,q}$ are line bundles, 
the only possibility of a filtration respecting the regularity
hypothesis~\eqref{eq:compHiggsFilt} and the fact that for a system of
Hodge bundles~$\theta$ shifts the degree by~$-1$ is
$$ V_\infty^{\geq \mu_i} \= \oplus_{p=4-i}^3 \cEE_\infty^{p,3-p}\,.$$
We deduce from  properties of an $\RR$-VHS that 
$\deg_{\rm par}\cEE^{p,q} = - \deg_{\rm par} \cEE^{q,p}$. Hence
the fiber at $t=\infty$ of $\cEE^{p,3-p}$ is the graded piece of 
weight $\mu_{4-p}$ of the filtration. This implies that 
$$  - \deg(\cEE^{2,1}) \= \deg(\cEE^{1,2}) + 1, \quad - \deg(\cEE^{3,0}) \= 
\deg(\cEE^{0,3}) + 1\,. $$
Since $\tau_1$ is an isomorphism, we conclude from~\eqref{eq:taup} that 
$\deg(\cEE^{2,1}) \= \deg(\cEE^{1,2}) + 1$ and hence $\deg(\cEE^{2,1}) = 0$
Since $\tau_0$ has a cokernel of length~$1$ we conclude from~\eqref{eq:taup} 
again $\deg(\cEE^{3,0}) = 0$. This gives the parabolic degrees as claimed 
in the theorem.
\end{proof}

\subsection{Local exponents, weight filtration and the cokernel lemmas} 
\label{sec:localexp}

We need two general concepts about a flat bundles $\VV$, local
exponents and the monodromy weight filtration. We let $n = {\rm rank}(\VV)$
and later we specialize to the case $n=4$ of primary interest.
\par
To recall the definition and the properties of local exponents, fix a
point $c \in \ol{C}$, let $t$ be a coordinate of $C$
such that $c$ is the point $t=0$ and fix a section $\omega(t)$
of $\VV$, whose first $n$ derivatives (with respect to $\nabla_{d/dt}$)
generate $\VV$ in a neighborhood of~$c$. Then there are meromorphic 
functions $P_i(t)$ such that
$$ L(\omega) = (\nabla_{d/dt}^n + \sum_{i=0}^{n-1} P_i(t)\, \nabla_{d/dt}^i )(\omega) = 0$$ 
Since the local system $\VV$ is supposed to have regular singularities, 
$t^{n-1-i} P_i(t)$ is holomorphic at $0$. It will be convenient to 
rewrite the differential equation in terms of the differential operator~$D$
as
$$ \nabla_{d/dt}^n + \sum_{i=0}^{n-1} P_i(t)\, \nabla_{d/dt}^i 
\= \nabla_{D}^n + \sum_{i=0}^{n-1} Q_i(t)\, \nabla_{D}^i.$$
\par
Now consider in general a linear differential operator
\be \label{eq:DE}
L(y) \= \frac{d^n y}{d\, t^n}  + \sum_{i=0}^{n-1} 
Q_i(t)  \frac{d^i y}{d\, t^i} 
\ee
\par
The {\em local exponents} $\{\mu_1(c), \ldots, \mu_n(c) \}$ of $L$ 
at $c\in C$ are the solutions
of the equation
$$ y^n  + \sum_{i=0}^{n-1} Q_i(0) y^i \= 0\,.$$
The local exponents at a point~$c$ are well-defined up to a simultaneous
shift by some integer. This ambiguity is due to the possibility of
replacing the section $\omega(t)$ by $t^k\omega(t)$, see Frobenius'
theorem (e.g.\ in \cite{YoFuchs}) and below.
\par
A point $c \in C$ is called {\em regular} if the functions $P_i(t)$ 
are regular at $c$. The regular points are precisely those points
where the local exponents are of the form $\{k, k+1, \ldots, k+n-1\}$
for some $k$.
\par
\medskip
The local exponents $\{\mu_1(c), \ldots, \mu_n(c) \}$ 
determine the exponents needed to write local solutions
of the differential equation as a power of uniformizer times a power series 
expansion. More precisely, if the difference of any two local exponents 
is non-integral, then the theorem of Frobenius states that the solutions of the
differential equation~\eqref{eq:DE} are
\bes 
s_i \= t^{\mu_i} P_i \quad \text{with} \quad P_i \in 1 + \C[[t]]\,.
\ees
We refer to this basis of solutions as {\em Frobenius basis}. If some
difference of local exponents is integral, then one has to add logarithmic
terms, according to the monodromy at~$c$. We give an example for $n=4$. 
\par
Suppose that the monodromy is maximal unipotent (hence all the $\mu_i$ 
are the same). Then the solutions are of the form
\bas s_1 &\= t^{\mu_1} P_1\,, \\
s_2 &\= \log(t) s_1 \+ t^{\mu_2} P_2 \\
s_3 &\= \tfrac12 \log(t)^2 s_2 \+ \log(t) s_1 \+ t^{\mu_3} P_3 \\
s_4 &\= \tfrac16 \log(t)^3 s_3 \+ \tfrac12 \log(t)^2 s_2 
\+ \log(t) s_1 \+ t^{\mu_4} P_4 \,.\\
\eas
We deduce that, by definition, a basis of local sections of the  the Deligne extension 
is  $t^{\mu_1}P_1, t^{\mu_2}P_2, t^{\mu_3}P_3, t^{\mu_4}P_4$. In fact, 
this last conclusions holds for any local monodromy matrix. 
For this reason the proof of Lemma~\ref{HiggsReg} 
does not depend on the form of the monodromy matrix.
\par
\medskip
We have expressed above the local exponents in terms of a (polynomial associated
to a) differential operator $L$, which in turns depends on the choice of
a local section $\omega$ of $\VV$. We recall how to retrieve $(\VV,\omega)$
up to isomorphism from $L$. In fact, let $\Sol \subset \cOO_C$ be 
the rank-$n$ local system of solutions of $L$. Then $\Sol \cong \VV^\vee$, 
since in fact the multiplication map
$$ m:\Sol  \otimes \cOO_C \to \cOO_C,$$
defines a section of $\VV$ and the pair $(\Sol,m)$ is isomorphic to the pair
$(\VV,\omega)$ we started with.
\par
In terms of a basis of $\Sol$ and its dual basis we can  compute the effect of
the covariant derivative. To simplify notations, we restrict to the case $n=4$
of primary interest here. Let $\{s_1,s_2,s_3,s_4\}$ be a basis of $\Sol$ around~$c$ 
and denote by
$$  s_j^\vee:\sum_{i=1}^4 s_i \otimes g_i \mapsto g_j 
\in \Sol^\vee \cong \VV \qquad (g_i \in \cOO_C(U) \,\,\text{for some $U$})$$
the elements of the dual basis. In this basis $m = \sum_{i=1}^4 s_i s_i^\vee$, 
as a section of $\Sol^\vee \cong \VV$. Moreover, 
\bas
\nabla_{d/dt}(m)\, \Bigl(\sum_{i=1}^4 s_i \otimes g_i \Bigr) &\= 
d\,\Bigl(\sum_{i=1}^4 s_i \otimes g_i \Bigr) - \sum_{i=1}^4 s_i g_i
\= \sum_{i=1}^4 s'_i \otimes g_i, \\ \quad \text{i.~e.}
\quad  \nabla_{d/dt}(m) &\= \sum_{i=1}^4 s'_i s_i^\vee\,.
\eas
\par
This completes the preparation for the main lemma.
\par
\begin{proof}[Proof of Lemma~\ref{HiggsReg}]
We start with the case of a regular point. Without changing the
length of the cokernels we may choose the section~$\omega$ to be
non-vanishing at~$c$, hence the local exponents are $\{\mu_1=0,1,2,3\}$. 
The length of the cokernel
of $\tau_0$ at the point $c$, i.~e.\ at $t=0$, is the vanishing order of 
\be \label{eq:derivm}
\nabla_{d/dt}(m) \= \sum_{i=1}^4 s_i' s_i^\vee \,\,\in \,\,V / \langle m \rangle,
\ee
where $V = \langle s_1^\vee,s_2^\vee,s_3^\vee,s_4^\vee \rangle$ is the fiber of 
$\cVV$ over~$c$. We use the Frobenius basis $\{s_1,s_2,s_3,s_4\}$ from now on. 
Consider the matrix $M = M(t)$ with entries $M_{ij}(t) = 
s_i^{(j-1)}(t)$. Since the $2\times2$-minor $M_{12}^{12}$ of $M$ has a determinant
with non-zero constant term (considered as element of $\CC[t]$), the vanishing 
order of~\eqref{eq:derivm} is zero, i.e.\ the map~$\tau_0$ is an isomorphism at~$c$. 
Similarly, the minor $M_{123}^{123}$ and also the determinant~$M$ itself 
have non-zero constant terms by our hypothesis on the local exponents.
Since 
$$\nabla^{(j)}_{d/dt}(m) \= \sum_{i=1}^4 s_i^{(j)} s_i^\vee, $$
this is precisely what we need to deduce that also the Kodaira--Spencer 
maps $\tau_1$ and $\tau_2$ are isomorphisms at~$x$. 
\par
The case of general (but still integral) local exponents, follows similarly. 
In fact, the minimal order of vanishing of a $2\times2$-minor of the first two 
rows of $M$ is given by
$M_{12}^{12}$, which starts with $t^{\mu_2-1}$.  Hence the length of the cokernel
of $\tau_0$ is as claimed. The minor $M_{123}^{123}$ starts with  $t^{\mu_3-2}$.
This is the length of the cokernel of the composition of Kodaira--Spencer maps
$\tau_1 \circ \tau_0: \cEE^{3,0} \to \cEE^{1,2} \to \Omega^1_{\ol{C}}(\Delta)^{\otimes 2}$ and it implies the claim about $\tau_1$. The same argument with the
determinant~$M$ finally implies the claim about~$\tau_2$.
\par
The discussion so far was concerned with points  $c \in C$. If 
$c \in \Delta \subset \ol{C}$, then the calculations above are the same with~$M$ 
replaced by the matrix with entries $M_{ij}(t) = \left(t\tfrac{\partial}{\partial t} 
\right)^{j-1} s_i(t)$.
This increases the length of each of the cokernels by one with respect to the 
previous calculations. 
\par
Finally in the case of non-integral local exponents recall that the 
sections of the Deligne extension are given by $t^{\{\mu_i\}} s_i$ in
terms of the Frobenius basis, where $\{\mu\} = \mu - \lfloor \mu \rfloor$ 
denotes the fractional part of $\mu$. Consequently, the preceding calculation
applies again, now with $\mu_i$ replaced by $\lfloor \mu_i \rfloor$.
\end{proof}
\par

\subsection{Conjectural region of equality} 
\label{sec:conj}

It seems likely, that the seven cases of Calabi--Yau type families with
equality are not isolated examples. Initially we conjectured that
the  equality $\lambda_1 + \lambda_2 \= 2(\mu_1 + \mu_2)$ is attained
in the entire region in the $(\mu_1,\mu_2)$-plane defined by the linear
inequality $3 \mu_2 \geq \mu_1 + 1$. After more detailed numerical
experiments by Fougeron  the conjecture cannot be
upheld in this form any more (see \cite{Fougeron}). According to 
these experiments it rather appears that equality is attained at
an infinite number of rational points in the $(\mu_1,\mu_2)$-plane. 
It would be very interesting to relate in general thinness of the 
monodromy group and the equality $\lambda_1 + \lambda_2 \= 2(\mu_1 + \mu_2)$,
see also \cite{dander} for some results in this direction.
\par
%
\par
Special cases of the conjecture can be equivalently formulated 
as a number-theoretic problem.  This new hypothetical  non-vanishing
property is  similar to the   non-vanishing
of the classical modular form
$\Delta(q):=q\prod_{n\ge 1} (1-q^n)^{24}$ for $0<|q|<1$.
Consider the mirror quintic (case~$4$
in Table~\ref{cap:mirror}), normalized so that the MUM-point is zero, 
the conifold point is at $t=\infty$ and the remaining singular point is $t=(1/5)^5$
instead of $t=1$ before. Since strict inequality
in Theorem~\ref{thm:main_estimate} is caused by the presence of bad points (see 
\eqref{eq:Tbad}), it 
seems natural to look for a flat section of $p^*(\wedge^2 \VV)$ (where
$p: \HH \to C$ is the universal cover) that avoids the 
bad locus. As a first attempt we take~$L$ to be the Lagrangian $2$-plane that is 
invariant under the monodromy around $t=0$ and the flat section it defines by
parallel transport along the upper half plane. In fact, since the condition
of having empty bad locus is open, it suffices to find a single flat section
of $p^*(\wedge^2 \VV^\vee)$ such that the pairing with~$L$ is everywhere non-zero
on~$\HH$. Here, again, we try the $2$-plane invariant under the monodromy 
around $t=0$ and its parallel transport. Near $t=0$, the $2$-plane~$L$
is generated by the differential $3$-forms $\{\Omega^{3,0}$ and $\Omega^{2,1}\}$ 
generating $\cEE^{3,0}$ and $\cEE^{2,1}$ respectively. The homology $2$-plane
is generated by the two ``shortest'' $3$-cycles $\{\gamma_0,\gamma_1\}$. 
It is well-known (e.g.\ \cite{kont_ICM}) that 
$$ \psi_0(t) \,:=\, \int_{\gamma_0} \Omega^{3,0}(t) \= \sum_{n \geq 0} \frac{(5n)!}{n!^5}t^n $$
and 
$$ \psi_1(t) \,:=\, \int_{\gamma_1} \Omega^{3,0}(t) \=  \log(t) \psi_0 \,+\, 
\sum_{n \geq 0} \frac{(5n)!}{n!^5}  \Biggl(\sum_{k=n+1}^{5n} \frac1k \Biggr) t^n
\,.$$
Since the Kodaira--Spencer map is non-vanishing (on $\PP^1 \setminus \{0,(1/5)^5,
\infty\}$), the integral against $\Omega^{2,1}(t)$ is given by the $t$-derivatives
of $\psi_0$ and $\psi_1$ respectively. Consequently, the contraction of~$L$ against
$\langle \gamma_0,\gamma_1 \rangle$ is given by the Wronskian
$$ W(t) \= \psi_0(t) \psi_1'(t) - \psi_0'(t) \psi_1(t)\,.$$ 
We consider the composition $F(q) = W \circ \lambda(q)$ with the $\lambda$-function
$$ \lambda: \Delta^* \to \CC, \quad \lambda(q) = \frac{q}{5^5} \cdot \Biggl( \frac{\sum_{n \in \ZZ} q^{n^2+n}}
{\sum_{n \in \ZZ} q^{n^2}} \Biggr)^4 \,,$$
where $\Delta^* = \{q \in \CC : 0<|q|<1\}$ denotes the punctured unit disc.
By the choice of~$L$ and $\{\gamma_0,\gamma_1\}$, the function~$F$ extends
meromorphically with a simple pole across $q=0$.
\par
Altogether, the non-vanishing of $L$ contracted against 
$\langle \gamma_0,\gamma_1 \rangle$ on the whole upper half plane and 
the mirror quintic case $\mu_1=1/5$, $\mu_2=2/5$ of the conjecture
stated in the introduction follows from the following statement.
\par
\begin{Conj} \label{conj:mirrorquintic}
The pullback~$F$ of the Wronskian $W(t)$ via $\lambda$ vanishes nowhere 
on the puntured unit disc~$\Delta^*$.
\end{Conj}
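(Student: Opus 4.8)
The plan is to first linearize the Wronskian. Writing $\tau=\psi_1/\psi_0$ for the (multivalued) mirror map on $C$, the quotient rule gives the identity $W=\psi_0\psi_1'-\psi_0'\psi_1=\psi_0^2\,\tfrac{d\tau}{dt}$, so that after composing with $\lambda$ one has $F(q)=\psi_0(\lambda(q))^2\cdot\tfrac{d\tau}{dt}\big|_{t=\lambda(q)}$. Since $\psi_0$ is invariant under the maximally unipotent monodromy around $t=0$, the function $\psi_0\circ\lambda$ is single valued near $q=0$, $\psi_0\to 1$, and the prescribed simple pole of $F$ comes entirely from $d\tau/dt$. The vanishing of $F$ on $\Delta^*$ thus amounts to the two assertions that the fundamental period $\psi_0$ does not vanish along the developing image and that the two periods $\psi_0,\psi_1$ together with their derivatives are everywhere linearly independent, i.e. that the rank two period map $[\psi_0:\psi_1]$ is an immersion. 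Equivalently, this is precisely the emptiness of the bad locus $T^{\bad}$ for $\cEE=\cFF^2$, which by the discussion preceding the conjecture is what forces equality in Theorem~\ref{thm:main_estimate}.

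The next step is to exploit the arithmetic nature of $\lambda$. A direct theta-function computation, using $\sum_n q^{n^2+n}=q^{-1/4}\theta_2$ and $\sum_n q^{n^2}=\theta_3$, identifies $\lambda(q)=\tfrac{1}{5^5}\lambda_{\mathrm{cl}}$, where $\lambda_{\mathrm{cl}}=\theta_2^4/\theta_3^4$ is the classical modular lambda function, a Hauptmodul for $\Gamma(2)$ with nome $q$. Thus $\lambda$ realizes the base $C=\PP^1\smallsetminus\{0,5^{-5},\infty\}$ as the \emph{arithmetic} modular curve $\HH/\Gamma(2)$, the three cusps of $\lambda_{\mathrm{cl}}$ (where $\lambda_{\mathrm{cl}}=0,1,\infty$) mapping respectively to the MUM point $t=0$, the fractional point $t=5^{-5}$, and the conifold $t=\infty$. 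Because $\lambda_{\mathrm{cl}}$ is the universal covering map it has nowhere vanishing derivative on $\HH$, so $F=W\circ\lambda$ vanishes at $q_0$ if and only if $W$ vanishes at $\lambda(q_0)$; as $\lambda(\Delta^*)=C$, the conjecture is \emph{equivalent} to the statement that the Wronskian $W$ has no zero on the open curve $C$.

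With this reformulation I would attempt a valence (degree counting) argument, in direct analogy with the proof that the weight twelve form $\Delta(q)=q\prod_{n\ge1}(1-q^n)^{24}$ has its only zero at the cusp. The quantity $W$ is a section of the line bundle $\cLL=\cEE^{3,0}\otimes\cEE^{2,1}=\det\cFF^2$, whose order at each singular point is governed by the local exponents $(0,0,0,0)$ at $t=0$, $(0,1,1,2)$ at the conifold $t=\infty$, and $(\tfrac15,\tfrac25,\tfrac35,\tfrac45)$ at $t=5^{-5}$. These orders can be read off from the Frobenius expansions of $\psi_0,\psi_1$ exactly as in the proof of Lemma~\ref{HiggsReg}, and then converted into orders of $F$ at the three cusps of $\Gamma(2)$ via the cusp widths. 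One would compare the total cusp order against the parabolic degree $\deg_{\rm par}\cLL=\mu_1+\mu_2=\tfrac35$ furnished by Theorem~\ref{thm:degs}, aiming to show that the forced cusp contributions already account for the full degree and hence leave no room for interior zeros, forcing $W\neq0$ on $C$.

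The hard part is that this bookkeeping cannot be closed by a scalar modular form argument. The flat bivector $\gamma_0\wedge\gamma_1$ is invariant only under the local monodromy at $t=0$; under the full $\Gamma(2)$ action it is moved by the restriction to $\wedge^2$ of the $\Sp(4,\ZZ)$ monodromy, so $F$ is really a single coordinate of a vector valued modular form rather than a classical one, and passing to a subgroup on which $\gamma_0\wedge\gamma_1$ becomes invariant would require the monodromy image to be finite, which it is not. Consequently the degree count has no slack, and controlling the zeros of an individual coordinate is exactly where the global behaviour of the period must enter: in the seven arithmetic (finite index) cases the analogous function does acquire interior zeros, bad points exist and the inequality in Theorem~\ref{thm:main_estimate} is strict, so any correct argument must distinguish the thin regime. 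I expect the decisive input to be a genuine non-vanishing theorem for the holomorphic period $\psi_0$ along the developing image, established either through an explicit product expansion of $F$ mirroring that of $\Delta(q)$, or through a positivity property of the mirror map forcing $[\psi_0:\psi_1]$ to be an immersion; producing either in the thin case is the central obstacle.
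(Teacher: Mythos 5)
You have not proven this statement, and neither does the paper: Conjecture~\ref{conj:mirrorquintic} is left open in the text, supported only by numerical evidence on the growth rate of the coefficients of $1/F$. Your proposal is likewise openly incomplete --- your final paragraph concedes that the valence-formula strategy cannot be closed and that the ``decisive input'' (a non-vanishing theorem for $\psi_0$ along the developing image, or a product expansion of $F$ analogous to that of $\Delta(q)$) is still missing. What you do supply correctly --- the identity $W=\psi_0^2\,d\tau/dt$ with $\tau=\psi_1/\psi_0$, the identification of the paper's $\lambda$ with $\lambda_{\rm cl}/5^5$ for the classical modular lambda function, and the link between emptiness of the bad locus $T^\bad$ and equality in Theorem~\ref{thm:main_estimate} --- is essentially the setup the paper already gives in the surrounding discussion, so it does not advance the conjecture.

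Beyond the admitted incompleteness, one reduction is wrong as stated: the claimed equivalence of the conjecture with ``$W$ has no zero on the open curve $C$''. The Wronskian $W=\psi_0\psi_1'-\psi_0'\psi_1$ is not a single-valued function on $C$, because the bivector $\gamma_0\wedge\gamma_1$ is invariant only under the local monodromy $T_0$ at the MUM point; $W$ descends exactly to the infinite cyclic cover $\Delta^*=\HH/\langle T_0\rangle$ of $C$, which is why the paper formulates the conjecture on $\Delta^*$ and speaks of non-vanishing on the whole upper half plane. The conjecture concerns one specific branch on this infinite cover, not the (infinitely many) monodromy translates over a point of $C$. This is also precisely why the degree count cannot work: $F$ is not a section of a line bundle on $\ol{C}$, so the quantity $\deg_{\rm par}(\cEE^{3,0}\otimes\cEE^{2,1})=\mu_1+\mu_2$ from Theorem~\ref{thm:degs} bounds the zeros of a single-valued determinant of periods, not of one coordinate of a vector-valued form for a Zariski-dense monodromy group --- as you yourself observe in your last paragraph. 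A correct proof must therefore contain a genuinely new global input, and none is provided here.
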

\par
Strong numerical evidence for this conjecture is given by considering
the growth rate of the coefficients of $1/F$. They appear to be growing
like $\exp(C\sqrt{n})$ for some~$C$, whereas for the reciprocal of  function 
with a zero in the disc (like e.g.\ $1/\psi_0(\lambda(q))\,$) has radius
of convergence strictly smaller than one and coefficients growing like $\exp(n)$.
\par

\begin{appendix}

\section{The multiplicative ergodic theorem and  equivalent norms for
measurable cocycles}

Suppose that we have a smooth, or continuous (or just measurable)
finite-dimen\-sional complex vector bundle $\mathcal{V}$ 
of rank~$r$ over the base $B$,
where the smooth (or topological) manifold~$B$ is endowed with a
probability measure $\mu$. Suppose that a map $T:B\to B$ ergodic with
respect to the measure $\mu$ extends to a smooth (continuous,
measurable) automorphism $A$ of the vector bundle $\mathcal{V}$. In
other words, we suppose that the map~$T$ of the base to itself lifts
to a map $A$ of the total space of the vector bundle to itself
preserving the bundle structure, such that $A$ is fiberwise $\CC$-linear,
and such that the induced linear transformations $A_x:
\mathcal{V}_{(x)}\to \mathcal{V}_{T(x)}$ of the fibers is invertible
for any $x\in B$. Suppose finally that each fiber $\mathcal{V}_{(x)}$
of the vector bundle $\mathcal{V}$ is endowed with a norm $\|\
\|_{(x)}$ which depends smoothly (continuously, measurably) on the
base point $x\in B$.

Consider the usual operator norm
$$
\|A_x\|:=
\max_{\vec v\in\mathcal{V}_{x}\setminus \vec 0}
\frac{\|A_x\vec v\|_{(T(x))}}{\|\vec v\|_{(x)}}\,.
$$
Define $\log^+(y)=\max(0,\log(y))$.

\begin{Defi} \label{def:meascocycle}
The above data $(B,T,\mu,\mathcal{V},\|\ \|,A)$ defines a
\textit{measurable cocycle} if $\log^+\|A_x\|$ is integrable over $B$
with respect to the measure $\mu$,
$$
\int_B \log^+\!\|A_x\|\, d\mu(x) <\infty\,.
$$
\end{Defi}
\par
We state the Multiplicative Ergodic Theorem in a form close to
the original formulation in~\cite{Oseledets}.

\begin{Thm}[Oseledets Theorem] Suppose that $(B,T,\mu,\mathcal{V},\|\ \|,A)$
is an integrable cocycle. Then there exist real numbers $\lambda_{(1)} 
> \lambda_{(2)} > \cdots > \lambda_{(k)}$
and $T$-equivariant complex 
subbundles of $\mathcal{V}$ defined for almost every $x \in B$,
denoted by
$$ 0 \subsetneq \mathcal{V}^{\leq \lambda_{(k)}} \subsetneq \cdots  
\subsetneq \mathcal{V}^{\leq \lambda_{(1)}} \= \mathcal{V}\,,$$
such that for vectors $v \in \mathcal{V}^{\leq \lambda_{(i)}} \setminus \mathcal{V}^{\leq \lambda_{(i+1)}}$
we have
$$ \lim_{N \to \infty} \frac{1}{N} \log \| T^N(v) \| \, \to \, \lambda_{(i)} \,.$$
\end{Thm}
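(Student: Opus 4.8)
The plan is to recognize this as the classical one-sided multiplicative ergodic theorem and to reduce it to Kingman's subadditive ergodic theorem together with the behaviour of the cocycle on exterior powers. Throughout I would write $A^{(N)}_x = A_{T^{N-1}x}\circ\cdots\circ A_{Tx}\circ A_x \colon \mathcal{V}_x \to \mathcal{V}_{T^N x}$ for the iterated cocycle, so that $\|T^N(v)\| = \|A^{(N)}_x v\|$ in the notation of the statement. First I would extract the top exponent: submultiplicativity of the operator norm gives $\log\|A^{(M+N)}_x\| \le \log\|A^{(N)}_{T^M x}\| + \log\|A^{(M)}_x\|$, so $f_N(x) = \log\|A^{(N)}_x\|$ is subadditive. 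The hypothesis $\int_B \log^+\|A_x\|\,d\mu < \infty$ forces $f_1^+ \in L^1(\mu)$, and subadditivity bounds $f_N^+$ by a Birkhoff sum of $f_1^+$, so the hypotheses of Kingman's theorem hold. Ergodicity of $T$ then yields a constant $\lambda_{(1)} = \inf_N \tfrac1N \int f_N\,d\mu$ with $\tfrac1N \log\|A^{(N)}_x\| \to \lambda_{(1)}$ for a.e.\ $x$. Only forward integrability is used, which is exactly what the hypothesis provides.

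Next I would run the same argument on the induced cocycles $\wedge^p A$ on $\wedge^p \mathcal{V}$ for $1 \le p \le r$. Each is again integrable, since $\log^+\|\wedge^p A_x\| \le p\,\log^+\|A_x\|$, and its top exponent is the product side of the singular value decomposition: $\tfrac1N\log\|\wedge^p A^{(N)}_x\|$ equals $\tfrac1N$ times the log of the product of the $p$ largest singular values of $A^{(N)}_x$, hence converges a.e.\ to a number $s_p$. Setting $\lambda_p = s_p - s_{p-1}$ (with $s_0=0$) produces a non-increasing list $\lambda_1 \ge \cdots \ge \lambda_r$ with multiplicity; collapsing repeated values yields the distinct exponents $\lambda_{(1)} > \cdots > \lambda_{(k)}$ and their multiplicities $d_i$.

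For the filtration I would define, for a.e.\ $x$,
$$ \mathcal{V}^{\leq \lambda_{(i)}}(x) \= \{\, v \in \mathcal{V}_x \;\colon\; \limsup_{N\to\infty} \tfrac1N \log\|A^{(N)}_x v\| \le \lambda_{(i)} \,\}. $$
These are nested linear subspaces, and the cocycle identity $A^{(N)}_{Tx}\circ A_x = A^{(N+1)}_x$ makes them $T$-equivariant. The exterior-power computation pins down the dimensions, $\dim \mathcal{V}^{\leq \lambda_{(i)}}(x) = d_i + \cdots + d_k$, since the top exponents on $\wedge^p$ already measure exactly the fastest-growing directions. It then remains to show that for $v \in \mathcal{V}^{\leq\lambda_{(i)}}(x) \setminus \mathcal{V}^{\leq\lambda_{(i+1)}}(x)$ the limit exists and equals $\lambda_{(i)}$. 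The upper bound is built into the definition; the matching lower bound I would obtain from the convergence of the symmetrized cocycle $\Lambda^{(N)}_x = ((A^{(N)}_x)^{\ast} A^{(N)}_x)^{1/2N}$ to a positive-definite limit $\Lambda_x$ whose eigenvalues are $e^{\lambda_{(i)}}$ and whose eigenspaces recover the flag.

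The main obstacle is precisely this last convergence: upgrading the norm-level information from Kingman (a statement about $\limsup$ and about operator norms) to a genuine limit for each individual vector, equivalently showing that the singular directions of $A^{(N)}_x$ stabilize and that the angles between the emerging Oseledets subspaces do not decay superexponentially. This is the analytic heart of Oseledets' theorem, and I would follow Raghunathan's or Ruelle's treatment, deriving the convergence of $\Lambda^{(N)}_x$ from the already-established convergence of all exterior-power exponents together with a Borel--Cantelli argument controlling the gaps between consecutive singular values.
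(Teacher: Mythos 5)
The paper does not prove this statement at all: it is quoted verbatim (``in a form close to the original formulation'') from Oseledets' paper and used as a black box, so there is no internal argument to compare yours against. Your outline is the standard modern route --- Kingman's subadditive ergodic theorem applied to $\log\|A^{(N)}_x\|$ and to the exterior powers $\wedge^p A$, the exponents read off as successive differences $s_p - s_{p-1}$, the flag defined by $\limsup$ growth rates, and the existence of the limit for individual vectors reduced to the convergence of $\bigl((A^{(N)}_x)^{\ast}A^{(N)}_x\bigr)^{1/2N}$ --- and it is sound in outline. You are right that the last step is the analytic heart; as written you defer it entirely to Raghunathan/Ruelle, so your text is a correct reduction plus a citation rather than a complete proof, which is no worse than what the paper itself does. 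Two small caveats worth recording. First, the hypothesis here is one-sided (only $\log^+\|A_x\|$ is assumed integrable, not $\log^+\|A_x^{-1}\|$), so the bottom exponent can a priori equal $-\infty$; your Kingman argument produces $\lambda_{(1)}=\inf_N \tfrac1N\int f_N\,d\mu \in [-\infty,\infty)$, and the claim that all $\lambda_{(i)}$ are real numbers needs either that convention or an additional integrability assumption. Second, the symmetrized cocycle $((A^{(N)}_x)^{\ast}A^{(N)}_x)^{1/2N}$ and the singular-value reading of $\|\wedge^p A^{(N)}_x\|$ presuppose an inner product on the fibers, whereas the data here is only a measurable family of norms; this is harmless (choose a measurable family of inner products equivalent to the given norms with ratio bounded by $\sqrt{r}$, which does not affect any $\tfrac1N\log$ limit), but it should be said. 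Also note that your sentence asserting $\dim\mathcal{V}^{\leq\lambda_{(i)}}(x)=d_i+\cdots+d_k$ ``since the top exponents on $\wedge^p$ already measure exactly the fastest-growing directions'' is not yet a proof --- it is a consequence of the deferred lemma, as you in effect concede in your final paragraph.
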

\par
We also use the notation $\lambda_1 \geq \lambda_2 \geq \cdots 
\geq \lambda_r$ for the Lyapunov spectrum consisting of the 
numbers $\lambda_{(i)}$ from Oseledets Theorem repeated with multiplicity 
${\rm rank}(\mathcal{V}^{\leq \lambda_{(i)}} / \mathcal{V}^{\leq \lambda_{(i+1)}})$.
\par
Instead of a discrete ergodic transformation of the vector bundle one
can consider an ergodic flow~$g_t$ on the base~$B$ and a smooth
(continuous, measurable) connection~$\nabla$ on the vector bundle,
where $\nabla$ is not assumed to be necessarily flat. Denote by
$A(x,t): \mathcal{V}_{(x)}\to \mathcal{V}_{g_t(x)}$ the linear
transformation of the fibers induced by the holonomy along the
trajectory of the flow.

\begin{Defi}
The cocycle $(B,g_t,\mu,\mathcal{V},\nabla,\|\ \|)$ is called\
{\em integrable}  if the function $\sup_{t\in[-1,1]}\log^+\|A(x,t)\|$ is
integrable over $B$ with respect to the measure $\mu$, i.e.\ 
$$
\int_B\ \ \sup_{t\in[-1,1]}\log^+\!\|A(x,t)\|\, d\mu(x) <\infty\,.
$$
In this situation we also say that $(\VV,g_t,\|\ \|)$ is an {\em integrable
flat bundle}.
\end{Defi}
\par
\par
The Multiplicative Ergodic Theorem stated above generalizes naturally
to multiplicative cocycles over flows.

It is clear from the definition that integrability of the cocycle and
the Lyapunov spectrum do not depend on the choice of the norm in the
vector bundle for a large class of norms. To provide a convenient
sufficient condition of equivalence of norms we start with the
following definition.

\begin{Defi}
Let $\mathcal{V}$ be a vector bundle over the base $B$; let $\mu$ be
a probability measure on $B$. We say that two norms $\|\ \|_1$ and
$\|\ \|_2$  on the vector bundle $\mathcal{V}$ are
$L^1(\mu)$-equivalent if the quantity
\begin{equation}
\label{eq:ratio:of:norms}
\max_{\vec v\in\mathcal{V}_{x}\setminus \vec 0}
\left|\log\frac{\|\vec v\|_{2\,(x)}}{\|\vec v\|_{1\,(x)}}\right|
=
\max_{\vec v\in\mathcal{V}_{x}\setminus \vec 0}
\left|\log\frac{\|\vec v\|_{1\,(x)}}{\|\vec v\|_{2\,(x)}}\right|
\end{equation}
is integrable over $B$ with respect to the measure $\mu$,
\begin{equation}
\label{eq:integral:of:ratio:of:norms}
\int_B \max_{\vec v\in\mathcal{V}_{x}\setminus \vec 0}
\left|\log\frac{\|\vec v\|_{2\,(x)}}{\|\vec v\|_{1\,(x)}}\right|\, d\mu(x) <\infty\,.
\end{equation}
\end{Defi}

The relation of $L^1(\mu)$-equivalence is, clearly, reflexive, symmetric, and
transitive.

\begin{Thm}
\label{th:equivalent:norms}
Suppose that data $(B,T,\mu,\mathcal{V},\|\ \|_1,A)$ define a
measurable cocycle. For any norm $\|\ \|_2$ which is $L^1(\mu)$-equivalent to
$\|\ \|_1$ the data $(B,T,\mu,\mathcal{V},\|\ \|_2,A)$ also define a
measurable cocycle, and it has the same Lyapunov filtration and the same
Lyapunov exponents as the original one.

Suppose that data $(B,g_t,\mu,\mathcal{V},\nabla,\|\ \|_1)$ define a
measurable cocycle. For any norm $\|\ \|_2$ which is $L^1(\mu)$-equivalent to
$\|\ \|_1$ the data $(B,g_t,\mu,\mathcal{V},\nabla,\|\ \|_2)$ also
define a measurable cocycle, and it has the same Lyapunov filtration and
the same Lyapunov exponents as the original one.
\end{Thm}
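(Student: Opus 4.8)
The plan is to reduce both assertions to a single scalar comparison function and then feed it into the two definitions. Set
$$\rho(x) \;=\; \max_{\vec v \in \mathcal{V}_{x}\setminus \vec 0}\left|\log\frac{\|\vec v\|_{2\,(x)}}{\|\vec v\|_{1\,(x)}}\right|,$$
so that the hypothesis of $L^1(\mu)$-equivalence is precisely $\rho \in L^1(\mu)$, and for every $x$ and every $\vec w \in \mathcal{V}_x$ one has the two-sided bound $\bigl|\log\|\vec w\|_{2\,(x)} - \log\|\vec w\|_{1\,(x)}\bigr| \le \rho(x)$. For the integrability claim (discrete case) I would apply this bound simultaneously to a source vector $\vec v \in \mathcal{V}_x$ and to its image $A_x\vec v \in \mathcal{V}_{T(x)}$: for every $\vec v \ne 0$,
$$\log\frac{\|A_x\vec v\|_{2\,(Tx)}}{\|\vec v\|_{2\,(x)}} \;\le\; \log\frac{\|A_x\vec v\|_{1\,(Tx)}}{\|\vec v\|_{1\,(x)}} + \rho(Tx) + \rho(x).$$
Maximizing over $\vec v$ (the $\rho$-terms are constant in $\vec v$) and then applying $\log^+$ gives $\log^+\|A_x\|_2 \le \log^+\|A_x\|_1 + \rho(x) + \rho(Tx)$. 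Since $T$ preserves $\mu$, we have $\int \rho(Tx)\,d\mu = \int\rho\,d\mu < \infty$, so the right-hand side is integrable and $(B,T,\mu,\mathcal{V},\|\cdot\|_2,A)$ is a measurable cocycle in the sense of Definition~\ref{def:meascocycle}.

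For equality of the Lyapunov exponents and of the filtration, the key observation is that the iterate $A^N_x\vec v$, with $A^N_x = A_{T^{N-1}x}\circ\cdots\circ A_x$, lives in the \emph{single} fibre $\mathcal{V}_{T^N x}$; comparing the two norms on that one fibre needs no telescoping and yields
$$\left|\tfrac1N\log\|A^N_x\vec v\|_{2\,(T^N x)} - \tfrac1N\log\|A^N_x\vec v\|_{1\,(T^N x)}\right| \;\le\; \frac{\rho(T^N x)}{N}.$$
Because $\rho\in L^1(\mu)$ and $\mu$ is $T$-invariant, the Birkhoff ergodic theorem makes the averages $\tfrac1N\sum_{k=0}^{N-1}\rho(T^k x)$ converge a.e.\ to a finite limit, and telescoping two consecutive averages forces $\rho(T^N x)/N \to 0$ for a.e.\ $x$. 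Hence for a.e.\ $x$ and every $\vec v$ the two normalized logarithmic growth rates share the same limit. Since the numbers $\lambda_{(i)}$ are exactly these limits, and the Oseledets subbundles $\mathcal{V}^{\le \lambda_{(i)}}_x$ are characterized by them, both the spectrum and the filtration are unchanged.

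The flow case proceeds through the identical two estimates with $A_x$ replaced by the holonomy $A(x,t)$: the one-step bound reads $\log^+\|A(x,t)\|_2 \le \log^+\|A(x,t)\|_1 + \rho(x) + \rho(g_t x)$, and for the exponents one uses that $A(x,T)\vec v$ lands in $\mathcal{V}_{g_T x}$ together with $\rho(g_T x)/T \to 0$, the latter obtained by applying the previous paragraph to the time-one map $g_1$ (whose Lyapunov data coincide with those of the flow). I expect the one genuinely delicate point to be here: the flow definition of integrability involves $\sup_{t\in[-1,1]}$, so one must place $x\mapsto\sup_{t\in[-1,1]}\rho(g_t x)$ in $L^1(\mu)$, and an arbitrary $L^1$ function can fail to have an integrable $[-1,1]$-maximal function along a flow. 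The clean way around this is to reduce the entire statement to the time-one map $g_1$, for which the discrete argument applies verbatim; in the geometric situations of interest the flow moves the cusp coordinate by a bounded factor over unit time, so $\sup_{t\in[-1,1]}\rho(g_t x)$ is comparable to $\rho$ and remains integrable. Apart from this between-integer-times control and the a.e.\ vanishing of $\rho(T^N x)/N$, everything is the routine bookkeeping of the two elementary estimates above.
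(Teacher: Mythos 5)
Your proposal is correct and follows essentially the same route as the paper: the same comparison function $\rho$, the same three-term bound plus $T$-invariance of $\mu$ for integrability, and the same Birkhoff-average argument forcing $\rho(T^N x)/N \to 0$ a.e.\ to identify the exponents and filtration. The paper dismisses the flow case as ``completely analogous,'' so your observation that $\sup_{t\in[-1,1]}\rho(g_t x)$ need not be integrable for an arbitrary $L^1$ function $\rho$ is a genuine subtlety the paper glosses over, and your reduction to the time-one map is the right repair.
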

\begin{proof}
We prove the Theorem for the cocycle with the discrete time; the
proof for the cocycles with continuous time is completely analogous.

\begin{multline*}
\log^+ \max_{\vec v\in\mathcal{V}_{x}\setminus \vec 0}
\frac{\|A_x\vec v\|_{2\, (T(x))}}{\|\vec v\|_{2\, (x)}}
=\\=
\log^+ \max_{\vec v\in\mathcal{V}_{x}\setminus \vec 0}
\frac{\|A_x\vec v\|_{2\, (T(x))}}{\|A_x\vec v\|_{1\, (T(x))}}\cdot
\frac{\|A_x\vec v\|_{1\, (T(x))}}{\|\vec v\|_{1\, (x)}}\cdot
\frac{\|\vec v\|_{1\, (x)}}{\|\vec v\|_{2\, (x)}}
\\ \le
\log^+ \max_{\vec w\in\mathcal{V}_{T(x)}\setminus \vec 0}
\frac{\|\vec w\|_{2\, (T(x))}}{\|\vec w\|_{1\, (T(x))}}
+
\log^+\max_{\vec v\in\mathcal{V}_{x}\setminus \vec 0}
\frac{\|A_x\vec v\|_{1\, (T(x))}}{\|\vec v\|_{1\, (x)}}
+
\log^+ \max_{\vec v\in\mathcal{V}_{x}\setminus \vec 0}
\frac{\|\vec v\|_{1\, (x)}}{\|\vec v\|_{2\, (x)}}
\\ \le
\max_{\vec w\in\mathcal{V}_{T(x)}\setminus \vec 0}
\left|\log
\frac{\|\vec w\|_{2\, (T(x))}}{\|\vec w\|_{1\, (T(x))}}\right|
+
\log^+\max_{\vec v\in\mathcal{V}_{x}\setminus \vec 0}
\frac{\|A_x\vec v\|_{1\, (T(x))}}{\|\vec v\|_{1\, (x)}}
+
\max_{\vec v\in\mathcal{V}_{x}\setminus \vec 0}
\left|\log \frac{\|\vec v\|_{1\,(x)}}{\|\vec v\|_{2\, (x)}}\right|
\end{multline*}

It remains to note that since $T:B\to B$ is measure preserving we have
$$
\int_B \max_{\vec w\in\mathcal{V}_{x}\setminus \vec 0}
\left|\log\frac{\|\vec w\|_{2\,(T(x))}}{\|\vec v\|_{1\,(T(x))}}\right|\, d\mu(x)
=
\int_B \max_{\vec v\in\mathcal{V}_{x}\setminus \vec 0}
\left|\log\frac{\|\vec v\|_{2\,(x)}}{\|\vec v\|_{1\,(x)}}\right|\, d\mu(x)\,.
$$
Thus, the first and the third terms in the latter sum are
$L^1(\mu)$-integrable by definition of $L^1(\mu)$-equivalent norms
and the second term is $L^1(\mu)$-integrable since the cocycle
represented by the data $(B,T,\mu,\mathcal{V},\|\ \|_1,A)$ is
integrable by assumption of the Theorem. We have proved that
$L^1(\mu)$-equivalence of the norms $\|\ \|_1$ and $\|\ \|_2$ implies
that as soon as the cocycle represented by the data
$(B,T,\mu,\mathcal{V},\|\ \|_1,A)$ is integrable, the cocycle
represented by the data $(B,T,\mu,\mathcal{V},\|\ \|_2,A)$ is also
integrable. It remains to prove that the Lyapunov filtrations and the
Lyapunov spectra of the two cocycles coincide.

For almost all points $x\in B$ the Lyapunov filtrations and Lyapunov
exponents are well-defined for both cocycles and the ergodic sum
along the trajectory $x, T(x), T(T(x)), \dots$ of the
quantity~\eqref{eq:ratio:of:norms} converges to the
integral~\eqref{eq:integral:of:ratio:of:norms}. Namely, let
\begin{multline*}
a_N(x):=\frac{1}{N}
\left(
\max_{\vec v\in\mathcal{V}_{x}\setminus \vec 0}
\left|\log\frac{\|\vec v\|_{2\,(x)}}{\|\vec v\|_{1\,(x)}}\right|
+
\max_{\vec v\in\mathcal{V}_{T(x)}\setminus \vec 0}
\left|\log\frac{\|\vec v\|_{2\,(T(x))}}{\|\vec v\|_{1\,(T(x))}}\right|
+\dots \right.\\ \dots + \left.
\max_{\vec v\in\mathcal{V}_{T^{N-1}(x)}\setminus \vec 0}
\left|\log\frac{\|\vec v\|_{2\,(T^{N-1}(x))}}{\|\vec v\|_{1\,(T^{N-1}(x))}}\right|\right)
\end{multline*}
The Ergodic Theorem implies that  for almost all $x\in B$
$$
\lim_{N\to+\infty} a_N(x) \=
\int_B
\max_{\vec v\in\mathcal{V}_{x}\setminus \vec 0}
\left|\log\frac{\|\vec v\|_{2\,(x)}}{\|\vec v\|_{1\,(x)}}\right|\,
d\mu(x) \,<\, \infty\,,
$$
which implies that for almost all $x\in B$ the vanishing of  the limits
\bes
\lim_{N\to+\infty}(a_N-a_{N-1}) \=0\quad \text{and} \quad 
\lim_{N\to+\infty}\frac{1}{N} a_{N-1}\=0\,,
\ees
and hence
\begin{equation}
\label{eq:lim:1:N:equals:0}
\lim_{N\to+\infty} \frac{1}{N}
\max_{\vec v\in\mathcal{V}_{T^{N-1}(x)}\setminus \vec 0}
\left|\log\frac{\|\vec v\|_{2\,(T^N(x))}}{\|\vec v\|_{1\,(T^N(x))}}\right|
\=
\lim_{N\to+\infty} a_N-\frac{N-1}{N}a_{N-1}
\=0\,.
\end{equation}
Thus, for almost any $x\in B$ and for any
$\vec v\in\mathcal{V}_{x}\setminus \vec 0$ we have
\begin{multline*}
\lambda_{(1)}(\vec v)=
\lim_{N\to+\infty} \frac{1}{N}\log\|T^N\vec v(x)\|_1=
\lim_{N\to+\infty} \frac{1}{N}\log
\left(\frac{\|T^N\vec v(x)\|_1}{\|T^N\vec v(x)\|_2}\cdot \|T^N\vec v(x)\|_2\right)
\\=
\lim_{N\to+\infty} \frac{1}{N}\log\frac{\|T^N\vec v(x)\|_1}{\|T^N\vec v(x)\|_2}+
\frac{1}{N}\log\|T^N\vec v(x)\|_2
=
0+\lambda_{(2)}(\vec v)\,,
\end{multline*}
where $\lambda_{(1)}(\vec v)$ (respectively $\lambda_{(2)}(\vec v)$) is the Lyapunov exponent
associated to the vector $\vec v$ defined by the first (respectively by second) cocycle, and
where the equality
$$
\lim_{N\to+\infty} \frac{1}{N}\log\frac{\|T^N\vec v(x)\|_1}{\|T^N\vec v(x)\|_2}\=0
$$
is the corollary of~\eqref{eq:lim:1:N:equals:0}.
\end{proof}
\end{appendix} 

\pagestyle{plain}

\printbibliography

\end{document}